\numberwithin{equation}{section}
\newtheorem{thm}{Theorem}[section]
\newtheorem{pro}[thm]{Proposition}
\newtheorem{cor}[thm]{Corollary}
\newtheorem{lem}[thm]{Lemma}
\theoremstyle{definition}
\newtheorem{exa}[thm]{Example}
\newcommand{\bZ}{\ensuremath{\mathbb Z}}
\newcommand{\cb}{\ensuremath{\mathcal B}}
\newcommand{\cc}{\ensuremath{\mathcal C}}
\newcommand{\cf}{\ensuremath{\mathcal F}}
\newcommand{\ci}{\ensuremath{\mathcal I}}
\newcommand{\cj}{\ensuremath{\mathcal J}}
\newcommand{\cm}{\ensuremath{\mathcal M}}
\newcommand{\cs}{\ensuremath{\mathcal S}}
\newcommand{\cu}{\ensuremath{\mathcal U}}
\newcommand{\cz}{\ensuremath{\mathcal Z}}
\newcommand{\GF}{\hbox{\rm GF}}
\newcommand{\PG}{\hbox{\rm PG}}
\newcommand{\del}{\backslash} 
\newcommand{\sands}{\quad\text{and}\quad}
\newcommand{\co}{\,\colon\,}
\newcommand{\rta}{\rightarrow}
\renewcommand{\=}{\,=\,}
\newcommand{\sub}[1]{_{\text{${\scriptscriptstyle #1}$}}}
\newcommand{\subb}[2]{_{\text{${\scriptscriptstyle #1,#2}$}}}
\newcommand{\les}{\unlhd}
\newcommand{\rank}[1]{\ensuremath{r_{\scriptscriptstyle #1}}}
\newcommand{\rk}[2]{\ensuremath{r_{\scriptscriptstyle #1}(#2)}}
\renewcommand{\r}[1]{\ensuremath{r(#1)}}
\newcommand{\rkm}[1]{\ensuremath{r_{\scriptscriptstyle M}(#1)}}
\newcommand{\rkn}[1]{\ensuremath{r_{\scriptscriptstyle N}(#1)}}
\newcommand{\rkl}[1]{\ensuremath{r_{\scriptscriptstyle L}(#1)}}
\newcommand{\rkp}[1]{\ensuremath{r_{\scriptscriptstyle P}(#1)}}
\newcommand{\cl}[2]{\ensuremath{\text{\rm cl}_{\scriptscriptstyle #1}(#2)}}
\newcommand{\cll}[1]{\ensuremath{\text{\rm cl}_{\scriptscriptstyle L}(#1)}}
\newcommand{\clm}[1]{\ensuremath{\text{\rm cl}_{\scriptscriptstyle M}(#1)}}
\newcommand{\cln}[1]{\ensuremath{\text{\rm cl}_{\scriptscriptstyle N}(#1)}}
\newcommand{\splice}[2]{\ensuremath{\text{Sp}(#1,#2)}}
\newcommand{\fsp}{\mathbin{\Join}}
\newcommand{\frp}{\mathbin{\Box}}
\newcommand{\fssp}{\,\fsp\,}
\newcommand{\fs}[1]{\ensuremath{\mathcal{FS} (#1)}}
\newcommand{\rlift}[2]{\ensuremath{\text
{\rm L$_{\scriptscriptstyle #2}^{#1}$}}}
\newcommand{\rtrun}[2]{\ensuremath{\text
{\rm T$_{\scriptscriptstyle #2}^{#1}$}}}
\newcommand{\isthm}[1]{\ensuremath{\text{\rm Isth}(#1)}}
\newcommand{\loops}[1]{\ensuremath{\text{\rm Loop}(#1)}}
\newcommand{\ist}[1]{\ensuremath{#1_1}}
\newcommand{\loo}[1]{\ensuremath{#1_0}}
\newcommand{\higgs}[3]{\ensuremath{L^{#1}_{\scriptscriptstyle #3, #2}}}
\newcommand{\ideal}[1]{\ensuremath{\mathcal J\sub{#1}}}
\newcommand{\idea}[3]{\ensuremath{\mathcal J\sub{#1}(#2,#3)}}
\newcommand{\ide}[4]{\ensuremath{\mathcal J\sub{#1}^{#2}(#3,#4)}}
\begin{document}
\title[Splicing Matroids]
{Splicing Matroids}
\author[J.~Bonin]
       {Joseph E.~Bonin}
\address 
{Department of Mathematics\\ The George Washington University\\
Washington, D.C. 20052, USA} \email{jbonin@gwu.edu, wschmitt@gwu.edu}
\author[W.~Schmitt]{William R. Schmitt}
\date{\today}
\bibliographystyle{abbrv}

\begin{abstract}
  We introduce and study a natural variant of matroid amalgams.  For
  matroids $M(A)$ and $N(B)$ with $M.(A\cap B)=N|(A\cap B)$, we define
  a splice of $M$ and $N$ to be a matroid $L$ on $A\cup B$ with
  $L|A=M$ and $L.B=N$.  We show that splices exist for each such pair
  of matroids $M$ and $N$; furthermore, there is a freest splice of
  $M$ and $N$, which we call the free splice.  We characterize when a
  matroid $L(A\cup B)$ is the free splice of $L|A$ and $L.B$.  We
  study minors of free splices and the interaction between free
  splice and several other matroid operations.  
Although free splice is not an associative operation, we prove a
  weakened counterpart of associativity that holds in general and we
  characterize the triples for which associativity holds.  We also study 
free splice as it relates to various classes of matroids.
\end{abstract}

\maketitle 
\begin{center}
  \emph{In memory of Tom Brylawski}
\end{center}
\vspace{.25in}

\section{Introduction}

Considering a variation of the well-known problem of matroid amalgams
led us to the new matroid construction that we develop in this paper.
This construction turns out to have many attractive properties that have
no counterparts for amalgams.

We first briefly recall matroid amalgams; for an excellent account of
this topic, see \cite[Section 12.4]{ox:mt}.  Given matroids $M(A)$ and
$N(B)$ with $M|(A\cap B)=N|(A\cap B)$, an amalgam of $M$ and $N$ is a
matroid $L$ on $A\cup B$ with $L|A=M$ and $L|B=N$.  The most familiar
special case is the generalized parallel connection, which glues two
matroids together, in the freest possible way, along what is
essentially a modular flat of one of the matroids~\cite{br:mcc,ox:mt}.
Apart from this atypically nice case, though, two matroids may have
incompatible structure and thus have no amalgam.  Also, pairs of matroids
that have amalgams may have no freest amalgam.

Now consider a variation on this theme.  Since deletion and
contraction commute, if we are given a matroid $L(A\cup B)$, then the
restriction $M=L|A$ and contraction $N=L.B$ have the property that
$M.(A\cap B)=N|(A\cap B)$.  Hence, starting with matroids $M(A)$ and
$N(B)$ satisfying $M.(A\cap B)=N|(A\cap B)$, it is natural to ask if
there is a matroid $L(A\cup B)$ such that $L|A=M$ and $L.B=N$;
we call such a matroid $L$ a splice of $M$ and $N$.
We show that, in contrast to the situation for amalgams, splices of 
such $M$ and $N$ always exist and, furthermore, there is always a 
freest splice; we call this matroid the free splice of $M$ and $N$
and denote it by  $M\fsp N$.
If $A\cap B=\emptyset$, then the free
splice is the free product~\cite{crsc:fpm, crsc:uft}.

In this paper we introduce matroid splices and study the free splice
operation in depth.  Consistent with a common phenomenon in matroid
theory, there are many ways that one may approach the free splice; we
use the Higgs lift to provide an efficient framework for developing
these various approaches.  A key component of Section
\ref{sec:prelim}, which treats preliminary matters, is a unified
treatment of cryptomorphic formulations of the Higgs lift; this
section also treats minors of Higgs lifts and the behavior of the
Higgs lift with respect to the weak and strong orders.  In Section
\ref{sec:splice}, we define splices in general and show that they form
a filter in a certain natural suborder of the weak order.  In the main
part of this section, we define the free splice as a particular Higgs
lift, we show that it is indeed the freest splice, we show that it
preserves the weak and strong orders, and we provide cryptomorphic
formulations of this operation.  One attractive result is the relation
between free splice and duality: $(M\fsp N)^*=N^*\fsp M^*$.  Using
cyclic flats, the basic question of when a matroid $L(A\cup B)$ is the
free splice of $L|A$ and $L.B$ is answered in Section
\ref{sec:factor}; a corollary of this work is that a matroid is
irreducible with respect to free splice if and only if, for each
ordered pair $x,y$ of distinct elements, there is cyclic flat that
contains $x$ but not $y$.  In Section \ref{sec:inter} we study the
interaction between free splice and several other matroid operations;
we show, for instance, that direct sums and generalized parallel
connections of irreducible matroids are irreducible, we study minors
of free splices, and we show that the free splice can be realized as
the intersection of certain free products.  Like the free product
operation, free splice is noncommutative but, in contrast to both free
product and direct sum, it is also nonassociative. In Section
\ref{sec:assoc} we prove a weakened version of associativity for free
splice and characterize triples of matroids for which associativity
holds.  We also describe the (very special) conditions under which
free splice is commutative.  In contrast to free product, free splice
does not preserve many commonly studied matroid properties (e.g.,
being representable, transversal, or base-orderable); Section
\ref{sec:class} contains examples addressing these points and gives a
necessary condition for a minor-closed class of matroids to be closed
under free splice.

We assume the reader is familiar with basic matroid theory.
Background on particular topics can be found in~\cite{ox:mt,we:mt,wh:tom}.

\section{Preliminaries}\label{sec:prelim}

The Higgs lift provides an efficient framework for defining the free
splice and treating its cryptomorphic formulations.  To pave the way,
the Higgs lift is reviewed in the main part of this section and a
unified account of its cryptomorphic formulations is presented.  (Many
of these formulations are known; others, such as that using cyclic
flats, as well as the unifying perspective itself, may be new.)
Similarly, results in this section on minors of Higgs lifts and the
fact that the Higgs lift preserves the weak and strong orders will be
used when treating corresponding results about free splice.  A
short section on submodular functions precedes and prepares the way
for the work on the Higgs lift.

\subsection{Notation and terminology}

A set $X$ in a matroid $M(E)$ is \emph{cyclic} 
if $X$ is a union of circuits
of $M$, that is, if $M|X$ has no isthmuses.  We write $M^*$ for the dual
of $M$ and denote by $\ci (M)$, $\cb (M)$, $\cc (M)$, $\cs (M)$, $\cf
(M)$, and $\cz (M)$, respectively, the collections of independent
sets, bases, circuits, spanning sets, flats, and cyclic flats of $M$.
For any family \cu\ of subsets of $E$, we write $\cu^c$ for the family
$\{E-X\co X\in\cu\}$.  Since $E-X$ is a flat of $M^*$ if and only if
$M^*\!.\,X$ has no loops, that is, $M|X$ has no isthmuses, it follows that
$\cf (M^*)^c$ is the collection of cyclic sets of $M$, so $\cz (M)
=\cf (M)\cap\cf (M^*)^c$.  We denote the rank function of $M$ by \rank
M\ and write \r M\ for $\rkm E$.  We write \isthm M\ and \loops M\ for the 
sets of all isthmuses and loops of $M$ and denote by $I(E)$ the 
free matroid on $E$; thus, $\isthm{I(E)}=\loops{I^*(E)}=E$.

For any finite set $E$ we denote by $\cm (E)$ the set of all matroids
on $E$.  The \emph{weak order}
on $\cm (E)$, denoted by $\leq$, is given by
$N\leq M$ if and only if $\ci (N)\subseteq\ci (M)$ or, equivalently,
$\rkn X\leq\rkm X$ for all $X\subseteq E$.  We say that $M$ is
\emph{freer} than $N$ if $N\leq M$.
Note that if $\r M=\r N$,
then $N\leq M$ if and only if $\cb (N)\subseteq \cb (M)$; thus in
this case, $N\leq M$ if and only if $N^*\leq M^*$.
The \emph{strong order} on the set of matroids $\cm (E)$ is defined by
setting $N\les M$ if and only if $\cf (N)\subseteq\cf (M)$.  This
relation is also described by saying that $N$ is a \emph{quotient} of $M$, or
that $M$ is a \emph{lift} of $N$.  It is well known that $N\les M$ if and
only if $\rkn Y -\rkn X\leq\rkm Y -\rkm X$ for all $X\subseteq
Y\subseteq E$; it follows immediately that $N\les M$ if and only if 
$M^*\les N^*$ and that $N\les M$ implies $N\leq M$.

\subsection{Submodular functions}

Recall that a \emph{submodular function} on a set $E$ is a function
$f\colon 2^E\rta\bZ$ that satisfies $f(X)+f(Y)\geq f(X\cup Y) +f(X\cap
Y)$ for all $X,Y\subseteq E$; also, a submodular function $f$ is the
rank function of a matroid on $E$ if and only if $0\leq f(X)\leq |X|$,
for all $X\subseteq E$, and $f$ is order-preserving (that is,
$f(X)\leq f(Y)$ for all $X\subseteq Y\subseteq E$).  The following
result was stated without proof in \cite[Lemma 1, p.~315]{we:mt}.  We
provide a proof here for the convenience of the reader.

\begin{lem}\label{lem:submod}
  If $f$ and $g$ are submodular functions on $E$ such that $f-g$ is
  order-preserving, then $h\colon 2^E\rta\bZ$ given by
  $h(X)=\min\{f(X),\,g(X)\}$ is submodular.
\end{lem}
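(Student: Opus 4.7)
The plan is to verify $h(X)+h(Y)\geq h(X\cup Y)+h(X\cap Y)$ by a case analysis on which of $f$ and $g$ attains the minimum at $X$ and at $Y$. Write $\phi \= f-g$, so the hypothesis says $\phi$ is order-preserving.

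First, two easy cases. If $h(X)=f(X)$ and $h(Y)=f(Y)$, then by submodularity of $f$,
\[
h(X)+h(Y) \= f(X)+f(Y) \;\geq\; f(X\cup Y)+f(X\cap Y) \;\geq\; h(X\cup Y)+h(X\cap Y),
\]
and the case $h(X)=g(X)$, $h(Y)=g(Y)$ is symmetric, using submodularity of $g$.

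The substantive case is when $h(X)=f(X)$ and $h(Y)=g(Y)$ (the reverse case follows by exchanging the roles of $X$ and $Y$). The first key observation is that the order-preserving hypothesis on $\phi$ pins down which function achieves the minimum at $X\cap Y$ and at $X\cup Y$: since $\phi(X)\leq 0$ and $X\cap Y\subseteq X$, we get $\phi(X\cap Y)\leq 0$, so $h(X\cap Y)=f(X\cap Y)$; similarly $\phi(Y)\geq 0$ and $Y\subseteq X\cup Y$ yield $\phi(X\cup Y)\geq 0$, so $h(X\cup Y)=g(X\cup Y)$. The desired inequality thus reduces to
\[
f(X)+g(Y) \;\geq\; g(X\cup Y)+f(X\cap Y).
\]
To prove this, rewrite the left minus right as $[f(X)-f(X\cap Y)]-[g(X\cup Y)-g(Y)]$. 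Using $f=g+\phi$, the first bracket equals $[g(X)-g(X\cap Y)]+[\phi(X)-\phi(X\cap Y)]$, and the second term in this sum is non-negative by order-preservation of $\phi$. Then $g(X)-g(X\cap Y)\geq g(X\cup Y)-g(Y)$ by submodularity of $g$, closing the inequality.

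The only real obstacle is spotting the right case split; once one notices that the order-preserving hypothesis forces $h(X\cap Y)=f(X\cap Y)$ and $h(X\cup Y)=g(X\cup Y)$ in the mixed case, the remaining step is a one-line telescoping that combines the monotonicity of $\phi$ with the submodularity of $g$ (by symmetry, submodularity of $f$ works equally well).
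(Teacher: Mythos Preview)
Your proof is correct and follows essentially the same route as the paper's: a three-case split according to which of $f,g$ attains the minimum at $X$ and at $Y$, with the mixed case handled by first using the order-preserving hypothesis on $f-g$ to identify $h(X\cap Y)$ and $h(X\cup Y)$, then combining submodularity of $g$ with monotonicity of $f-g$. The paper phrases the case split via the order ideal $\{X:f(X)\le g(X)\}$ and order filter $\{X:f(X)\ge g(X)\}$, but the substance is identical.
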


\begin{proof}
  Let $\cj\sub{\leq}=\{X\subseteq E \co f(X)\leq g(X)\}$ and
  $\cj\sub{\geq}=\{X\subseteq E \co f(X)\geq g(X)\}$.  Note that
  $\cj\sub\geq\cup\cj_\leq=2^E$ and that, since $f-g$ is
  order-preserving, $\cj\sub\leq$ is an ideal and $\cj\sub\geq$ is a
  filter in $2^E$.  If $X,Y\in\cj\sub\leq$, then 
  \begin{align*}
    h(X)+h(Y) \= &\, f(X)+f(Y) \\
    \,\geq\, &\, f(X\cup Y)+f(X\cap Y) \\
    \,\geq\, &\, h(X\cup Y)+h(X\cap Y),
  \end{align*}
  and similarly for $X,Y\in\cj\sub\geq$.  If $X\in\cj\sub\geq$ and
  $Y\in\cj\sub\leq$, then $X\cup Y\in\cj\sub\geq$ and $X\cap
  Y\in\cj\sub\leq$.  Using the fact that $(f-g)(Y)\geq (f-g)(X\cap
  Y)$, we thus have
  \begin{align*}
    h(X)-h(X\cup Y) \= &\, g(X)-g(X\cup Y) \\
    \,\geq\, & \, g(X\cap Y)  - g(Y) \\
    \,\geq\, & \, f(X\cap Y) - f(Y) = h(X\cap Y)-h(Y).
  \end{align*}
  Hence $h$ is submodular.
\end{proof}

We note that if, in addition, one of the functions $f$ and $g$ is the
rank function of a matroid and the other is nonnegative and
order-preserving, then $h$ is the rank function of a matroid.

\subsection{The Higgs lift}

Suppose that $N\les M$ in $\cm (E)$ and $i\geq 0$.  By Lemma
\ref{lem:submod} and the observation after it, the function
$\min\{\rank M, i+\rank N\}$ is the rank function of a matroid, which
we denote by \higgs iMN\ and which is known as the \emph{$i$th Higgs
  lift of} $N$ \emph{towards} $M$.  It is immediate that $\higgs 0MN =
N$, that $\higgs iMN=M$ for all $i\geq\r M-\r N$, and that $\higgs
iMN$ is an elementary quotient of $\higgs{i+1}MN$ for all $i\geq 0$.
We extend the definition of \higgs iMN\ to all $i\in\bZ$ by setting
$\higgs iMN = N$ for $i< 0$.  We define the family $\ideal <=\ide
<iMN\subseteq 2^E$ by
$$
\ide <iMN \= \{X\subseteq E\co\rkm X- \rkn X < i\},
$$
similarly define 
\ideal\leq, \ideal >, \ideal\geq, \ideal =, and let
$\ideal\succ\=
\{X\subseteq E\co\rkm X-\rkn X = i+1\}$.
The rank function of $L=\higgs iMN$ is thus given, for $i\geq 0$,
by
$$
 \rkl X \= 
\begin{cases}
\rkm X &
\text{if $X\in\ideal\leq$,}\\
\rkn X + i&
\text{if $X\in\ideal\geq$},
\end{cases}
$$
for all $X\subseteq E$.
Note that, since $N$ is a quotient of $M$ and thus $\rank M-\rank N$ is 
order-preserving,
\ideal <\ and \ideal\leq\ are order ideals, and \ideal>\ and
\ideal\geq\ are order filters, in the Boolean algebra $2^E$.

The following result about the dual of a Higgs lift is used
  extensively throughout this paper.
\begin{pro}\label{pro:hdual}
  If $N\les M$ and $i+j=\r M-\r N$, then $(\higgs
  iMN)^*=\higgs j{N^*}{M^*\!}$ and $\ide\leq i MN^c= \ide\geq
  j{N^*\!}{M^*}$.
\end{pro}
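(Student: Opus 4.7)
The plan is to verify both identities by a direct rank-function computation, using the standard duality formula $\rk{M^*}{X} = |X| - \r M + \rkm{E-X}$ throughout.

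First, I would observe that $N \les M$ is equivalent to $M^* \les N^*$, so the right-hand side $\higgs j{N^*}{M^*}$ is a well-defined Higgs lift. The extreme values of $i$ (namely $i < 0$ or $i \geq \r M - \r N$) can be dispatched quickly, since in those cases one of the two lifts collapses to $N$, $M$, $N^*$, or $M^*$ and the duality is immediate. I would therefore focus on the main range $0 \leq i \leq \r M - \r N$, where $\r{\higgs i M N} = \min\{\r M,\, \r N + i\} = \r N + i$.

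For the rank identity, I would let $L = \higgs i M N$, apply the duality formula to $\rk{L^*}{X}$, and substitute the value of $\rkl{E-X}$ from the definition of the Higgs lift. On the other side, I would expand the rank function $\min\{\rk{N^*}{X},\, \rk{M^*}{X} + j\}$ of $\higgs j{N^*}{M^*}$ by applying the duality formula to each of $\rk{N^*}{X}$ and $\rk{M^*}{X}$, and then use $j = \r M - \r N - i$ to cancel constants. Both expressions reduce to $|X| - \r N - i + \min\{\rkm{E-X},\, \rkn{E-X} + i\}$, which establishes $(\higgs i M N)^* = \higgs j{N^*}{M^*}$.

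For the second identity, I would unfold the definitions: $Y \in \ide\geq j{N^*}{M^*}$ means $\rk{N^*}{Y} - \rk{M^*}{Y} \geq j$, and applying the duality formula to each rank gives $(\r M - \r N) - (\rkm{E-Y} - \rkn{E-Y}) \geq j$. Since $i + j = \r M - \r N$, this rearranges to $\rkm{E-Y} - \rkn{E-Y} \leq i$, which is exactly the condition $E - Y \in \ide\leq i M N$, i.e.\ $Y \in \ide\leq i M N^c$. The only real care required is bookkeeping between duality (complementation in $E$) and the family-complement operator $(\cdot)^c$; no deeper idea is needed.
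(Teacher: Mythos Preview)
Your proposal is correct and follows essentially the same approach as the paper's proof: a direct rank-function computation using the duality formula $\rk{M^*}{X} = |X| - \r M + \rkm{E-X}$, together with the identity $j = \r M - \r N - i$ to cancel constants. The paper organizes the first calculation slightly differently (starting from $\rk{L'}X$ and reducing it to $\rk{L^*}X$) and phrases the second identity as the single equation $\rk{N^*}X - \rk{M^*}X - j = -(\rkm{E-X} - \rkn{E-X} - i)$, but the content is identical to what you outline.
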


\begin{proof}
  The claim is trivially true unless $0\leq i\leq r(M)-r(N)$, so
  assume this inequality holds.  Let $L=\higgs iMN$ and $L'=\higgs
  j{N^*}{M^*\!}$.  For all $X\subseteq E$, we have
  \begin{align*}
    \rk{L'}X  \= & \,\min\{\rk{N^*}X,\, \rk{M^*}X +j\} \\
    \= & \, |X|-\r N -i +\min\{\rkm{E-X},\,  \rkn{E-X}+i\} \\
    \= & \, |X| -\r{L} +\rkl{E-X}\\
    \= & \, \rk{L^*}X,
  \end{align*}
  and so $L'=L^*$.  A straightforward computation gives
  $$\rk{N^*}X -\rk{M^*}X-j =
  -(\rkm{E-X} -\rkn{E-X} -i)$$ for all $X\subseteq E$.  Hence
  $X\in\ide\geq j{N^*\!}{M^*}$ if and only if $E-X\in\ide\leq i MN$,
  that is, if and only if $X\in\ide\leq i MN^c$.
\end{proof}

The same argument also gives $\ide <iMN^c= \ide >j{N^*\!}{M^*}$ and
$\ide =iMN^c= \ide =j{N^*\!}{M^*}$.

In the next section we define,
in terms of Higgs lifts,
the free splice operation on matroids.
Suitably interpreting the next result 
then gives the various equivalent formulations of this new operation.
\begin{thm}\label{thm:hcrypt}
  If $N\les M$ and $L=\higgs iMN$ with $i\geq 0$, then
\begin{enumerate}[\rm (1)\,]
\item 
$\ci (L) \= \ci (M)\cap\ideal\leq$,
\item
$\cs (L) \= \cs (N)\cap\ideal\geq$,
\item
$\cb (L) =\ci (M)\cap\cs (N)\cap\ideal =$, 
\item
$\cc (L) \= (\cc (M)\cap \ideal\leq)\cup(\ci (M)\cap\cf (N^*)^c\cap
\ideal\succ )$,
\item
$
 \cll X \= 
\begin{cases}
\clm X &
\text{if $X\in\ideal<$,}\\
\cln X &
\text{if $X\in\ideal\geq$},
\end{cases}\\
$
for all $X\subseteq E$,
\item
$\cf (L) \= (\cf (M)\cap\ideal<)\,\cup\,\cf (N)$,
\item
$\cz (L) \= (\cz (M)\cup\ideal >)\,\cap\,
(\cz (N)\cup\ideal <)$\\
$\hspace*{5.95ex}\= (\cz (M)\cap\ideal <)\,\cup\, (\cz (N)\cap\ideal >)\,\cup\,
(\cz (M)\cap\cz (N)).
$
\end{enumerate}
\end{thm}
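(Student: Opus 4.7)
The plan is to derive all seven parts from the piecewise rank formula $\rkl{X} = \min\{\rkm{X}, \rkn{X} + i\}$, using two structural facts: $\rank{M} - \rank{N}$ is order-preserving (so adding one element to $X$ changes $(\rank{M} - \rank{N})(X)$ by $0$ or $+1$), and Proposition~\ref{pro:hdual} gives a clean duality between $L$ and $\higgs{j}{N^*}{M^*}$ that exchanges $\ideal\leq$ with the complement of $\ideal\geq$. Parts (1)--(3), (5), and (6) will fall out by case analysis on whether $X \in \ideal\leq$ or $X \in \ideal>$. For (1), $X$ is independent in $L$ iff $\rkl{X} = |X|$, which together with $|X| \leq \rkm{X}$ unravels to $X \in \ci(M) \cap \ideal\leq$. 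Part (2) is the dual of (1) via Proposition~\ref{pro:hdual}, with $\ci(N^*)$ complementing to $\cs(N)$ and $\ide\leq{j}{N^*}{M^*}$ to $\ideal\geq$; part (3) is then immediate from $\cb(L) = \ci(L) \cap \cs(L)$ and $\ideal\leq \cap \ideal\geq = \ideal=$. For (5), the key local observation is about single-element extensions $X \cup y$: if $X \in \ideal<$, then $X \cup y$ still lies in $\ideal\leq$ (by the $0$-or-$+1$ step), so $\rkl = \rank{M}$ on this neighborhood and $\cll{X} = \clm{X}$; if $X \in \ideal\geq$, the filter property puts $X \cup y$ in $\ideal\geq$, so $\rkl = \rank{N} + i$ there and $\cll{X} = \cln{X}$. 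For (6), (5) yields $\cf(L) = (\cf(M) \cap \ideal<) \cup (\cf(N) \cap \ideal\geq)$, and since $\cf(N) \subseteq \cf(M)$ (from $N \les M$), the restriction on $\cf(N)$ may be dropped.

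For (4), $X$ is dependent in $L$ iff $X \notin \ci(M)$ or $X \in \ideal>$; noting that any $M$-circuit strictly contained in $X$ would already be dependent in $L$, minimality confines circuits of $L$ to two cases: (A) $X \in \cc(M) \cap \ideal\leq$, and (B) $X \in \ci(M) \cap \ideal>$ (the mixed case $\cc(M) \cap \ideal>$ is ruled out because $\rkm{X} = \rkm{X - y}$ forces the required $+1$ jump in $\rank{M} - \rank{N}$ to come from $\rkn{X - y} = \rkn{X} - 1$, violating monotonicity of $\rank{N}$). Case A is automatic since $\ideal\leq$ is an order ideal. In case B, the requirement $X - y \in \ideal\leq$ for each $y \in X$ combined with $(\rank{M} - \rank{N})(X) \geq i+1$ and the $0$-or-$+1$ step pins $X \in \ideal\succ$ and forces the jump of $\rank{M} - \rank{N}$ at every $y$ to be exactly $+1$; since $\rank{M}$ already rises by $1$ (as $X \in \ci(M)$), this amounts to $\rkn{X - y} = \rkn{X}$ for all $y$, i.e., $X \in \cf(N^*)^c$.

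For (7), use $\cz(L) = \cf(L) \cap \cf(L^*)^c$ and apply (6) to both $L$ and (via Proposition~\ref{pro:hdual}) to $L^*$; distributing the intersection yields four terms, one of which vanishes on $\ideal< \cap \ideal> = \emptyset$, while two others reduce cleanly to $\cz(M) \cap \ideal<$ and $\cz(N) \cap \ideal>$. The main obstacle is the fourth term, $\cf(N) \cap \cf(M^*)^c$, which must be identified with $\cz(M) \cap \cz(N)$. The inclusion $\supseteq$ is immediate; the reverse requires the nontrivial observation that a flat of $N$ cyclic in $M$ is automatically cyclic in $N$. This is the heart of the argument: if $X \in \cf(N) \cap \cf(M^*)^c$ had an isthmus $y$ of $N|X$, then $\rkn{X - y} = \rkn{X} - 1$ together with $\rkm{X - y} = \rkm{X}$ (since $X$ is cyclic in $M$) would give $(\rank{M} - \rank{N})(X - y) = (\rank{M} - \rank{N})(X) + 1$, contradicting order-preservation. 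Combined with $\cf(N) \subseteq \cf(M)$, this delivers $X \in \cz(M) \cap \cz(N)$. The equivalence of the two stated formulations of $\cz(L)$ is then a distribution identity using $\ideal< \cap \ideal> = \emptyset$.
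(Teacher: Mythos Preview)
Your proof is correct and follows essentially the same route as the paper's: parts (1)--(3), (5), (6) are argued identically, (4) via the same case split (you spell out the exclusion of $\cc(M)\cap\ideal>$ a bit more explicitly than the paper does---note a harmless sign slip there: the contradiction is $\rkn{X-y}\geq\rkn X+1$, not $\rkn{X-y}=\rkn X-1$), and (7) via $\cz(L)=\cf(L)\cap\cf(L^*)^c$ together with duality. Your direct rank argument for $\cf(N)\cap\cf(M^*)^c\subseteq\cz(N)$ is exactly the content of the paper's inclusion $\cf(M^*)\subseteq\cf(N^*)$ (equivalently $\cf(M^*)^c\subseteq\cf(N^*)^c$, i.e.\ cyclic in $M$ implies cyclic in $N$), just unpacked.
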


\begin{proof}
  A set $X\subseteq E$ belongs to $\ci (L)$ if and only if
  $|X|=\min\{\rkm X,\rkn X +i\}$. Since $\rkm X\leq |X|$, this is the
  case if and only if $|X|=\rkm X\leq\rkn X+i$, that is, if and only
  if $X\in\ci (M)\cap\ideal\leq$, so (1) holds. Statement (2) is dual
  to (1), following from Proposition \ref{pro:hdual} and the fact that
  $\cs (L) = \ci (L^*)^c$, and (3) is immediate from (1) and (2).

  It is immediate from (1) that if $X\in\ideal\leq$, then $X\in\cc
  (L)$ if and only if $X\in\cc (M)$. For $X\in\ideal>$, we have
  $X\in\cc (L)$ if and only if $X-x\in\ci(M)$ and $\rkm {X-x} \leq
  \rkn {X-x}+i$ for all $x\in X$.  Since $\rkm X > \rkn X +i$, this is
  equivalent to $X\in\ci(M)$ and $\rkn X = \rkn{X-x}$ for all $x\in
  X$.  Hence (4) follows.
  
  Suppose $X\subseteq E$ and $Y=X\cup x$ for some $x\in E$.  If
  $X\in\ideal<$, then $Y\in\ideal\leq$; thus $\rkl Y-\rkl X =\rkm
  Y-\rkm X$, so $x\in\cll X$ if and only if $x\in\clm X$.  If
  $X\in\ideal\geq$, then $Y\in\ideal\geq$ since $\ideal\geq$ is an
  order filter; thus $\rkl Y-\rkl X =\rkn Y-\rkn X$, and so $x\in\cll X$
  if and only if $x\in\cln X$.  Thus (5) holds.  It is immediate from
  (5) that $\cf (L) = (\cf (M)\cap\ideal< ) \cup (\cf
  (N)\cap\ideal\geq )$.  The fact that $N$ is a quotient of $M$ gives $\cf
  (N)\subseteq\cf (M)$, so (6) follows.

  By Proposition \ref{pro:hdual}, we have $\cf (L^*)= \cf (\higgs
  j{N^*}{M^*\!})$, where $j=\r M-\r N-i$, and so $\cf (L^*)= (\cf
  (N^*)\cap\ide <j{N^*\!}{M^*})\cup\cf (M^*)= (\cf (N^*)\cap\ide
  >iMN^c)\cup\cf (M^*)$.  Hence
  \begin{align*}
    \cz (L) &\= \cf (L)\cap\cf (L^*)^c\\
    &\= \left((\cf (M)\cap\ideal <)\cup\cf (N)\right)\,\cap\,
    \left((\cf (N^*)^c\cap\ideal >)\cup\cf (M^*)^c\right)\\
    &\= (\cf (M)\cap\ideal<\cap\cf (M^*)^c)\cup (\cf (N)\cap\cf
    (N^*)^c\cap\ideal>) \cup (\cf (N)\cap\cf (M^*)^c).
  \end{align*}
  Since $N\les M$, and thus $M^*\les 
N^*$, we have $\cf (N)\subseteq\cf (M)$ and $\cf (M^*)\subseteq\cf
  (N^*)$.  Hence $\cf (N)\cap\cf (M^*)^c= \cf (M)\cap\cf
  (M^*)^c\cap\cf (N)\cap\cf (N^*)^c= \cz (M)\cap\cz (N)$, and thus
  $$
  \cz (L)\= (\cz (M)\cap\ideal <)\,\cup\, (\cz (N)\cap\ideal
  >)\,\cup\, (\cz (M)\cap\cz (N)).
  $$
\end{proof}

For $N\les M$, the interval from $N$ to $M$ in the strong order, which
we denote by $[N,M]\sub\les$, is graded of rank $\r M -\r N$.  The
following characterization of the Higgs lift was given in
\cite{keke:hfg}.

\begin{pro}\label{pro:hmax}
  For $N(E)\les M(E)$ and $0\leq i\leq\r M-\r N$, the Higgs lift
  \higgs iMN\ is the maximum element in the weak order on
  $\{L\in [N,M]\sub\les\co \r L-\r N = i\}$.
\end{pro}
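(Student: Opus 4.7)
The plan is to verify two things: first, that $\higgs iMN$ itself belongs to the set $\{L \in [N,M]\sub\les \co \r L - \r N = i\}$, and second, that every element of this set is bounded above by $\higgs iMN$ in the weak order. The first point is essentially a recap of observations already recorded earlier in this subsection. Since $i \leq \r M - \r N$, evaluating the rank formula at $E$ gives $\rk{\higgs iMN}E = \min\{\r M,\,\r N + i\} = \r N + i$, as required. Moreover, the chain
\[
N \= \higgs 0MN \les \higgs 1MN \les \cdots \les \higgs{\r M - \r N}MN \= M,
\]
of successive elementary quotients noted just after the definition of the Higgs lift, places $\higgs iMN$ in the interval $[N,M]\sub\les$.

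For the maximality claim, fix $L$ with $N \les L \les M$ and $\r L = \r N + i$, and fix $X \subseteq E$. The strong order inequality $L \les M$ implies the weak order inequality $L \leq M$, so $\rkl X \leq \rkm X$. Applying the rank-difference characterization of the strong order recalled in Subsection~2.1 to $N \les L$ with the pair $X \subseteq E$ yields $\r L - \rkl X \geq \r N - \rkn X$, which rearranges to $\rkl X \leq \rkn X + i$. Combining the two bounds gives
\[
\rkl X \,\leq\, \min\{\rkm X,\, \rkn X + i\} \= \rk{\higgs iMN}X,
\]
and hence $L \leq \higgs iMN$ in the weak order.

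There is no genuine obstacle: the entire argument reduces to packaging the two inequalities afforded by $N \les L$ and $L \les M$ through the rank-difference reformulation of the strong order. The only subtlety worth flagging is the role of the hypothesis $0 \leq i \leq \r M - \r N$, which is precisely what makes the rank identity $\rk{\higgs iMN}E = \r N + i$ hold and guarantees that the set in question is nonempty.
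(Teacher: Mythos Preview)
Your proof is correct and follows essentially the same approach as the paper: both derive $\rkl X \leq \rkn X + i$ from $N \les L$ via the order-preserving property of $\rank L - \rank N$, and $\rkl X \leq \rkm X$ from $L \les M$, then combine these into the min formula. Your explicit verification that $\higgs iMN$ lies in the set is a welcome addition that the paper leaves implicit from the preceding discussion.
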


\begin{proof}
  Suppose $L\in [N,M]\sub\les$ with $\r L-\r N = i$.  Let $X\subseteq
  E$.  Since $N\les L$, the function $\rank L-\rank N$ is order
  preserving, so $\rkl X -\rkn X\leq\rkl E - \rkn E = i$, that is,
  $\rkl X\leq \rkn X +i$.  Since $L\les M$, we have $L\leq M$, so
  $\rkl X\leq\rkm X$.  Hence $\rkl X\leq\min\{\rkm X,\rkn X+i\}=
  \rk{\higgs iMN} X$, so $L\leq\higgs iMN$.
\end{proof}

We next show that the Higgs lift preserves the strong and weak orders.

\begin{pro}\label{pro:horder}
  Suppose that $N(E)\les M(E)$ and $N'(E)\les M'(E)$.
  \begin{enumerate}[\rm (1)\,]
  \item If $N\leq N'$ and $M\leq M'$, then $\higgs iMN\leq\,\higgs
    i{M'}{N'\!}$.
  \item If $N\les N'$ and $M\les M'$, then $\higgs iMN\les\,\higgs
    k{M'}{N'\!}$ where $k=i+\r{M'}-\r M$.
  \end{enumerate}
\end{pro}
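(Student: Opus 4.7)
The plan is to handle the two parts with different tools: a direct rank-function computation for (1), and the flat description of Theorem \ref{thm:hcrypt}(6) for (2).

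For (1), I would combine the hypotheses $N\leq N'$ and $M\leq M'$, which give $\rkn X\leq \rk{N'}X$ and $\rkm X\leq \rk{M'}X$ for every $X\subseteq E$, with the rank formula $\rk{\higgs iMN}X=\min\{\rkm X,\,\rkn X+i\}$.  Since the minimum is monotone in each argument,
$$
\rk{\higgs iMN}X \,\leq\, \min\{\rk{M'}X,\,\rk{N'}X+i\} \,=\, \rk{\higgs i{M'}{N'}}X
$$
for all $X\subseteq E$, which is exactly the defining inequality for the weak order.

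For (2), I would verify that $\cf(\higgs iMN)\subseteq \cf(\higgs k{M'}{N'})$.  By Theorem \ref{thm:hcrypt}(6), every flat $F$ of $\higgs iMN$ lies in $\cf(N)$ or in $\cf(M)\cap\ide<iMN$.  In the first case, $N\les N'$ gives $F\in\cf(N')\subseteq\cf(\higgs k{M'}{N'})$.  In the second case, $M\les M'$ gives $F\in\cf(M')$, so it remains to check that $F\in\ide<k{M'}{N'}$.

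This rank-gap estimate is the technical heart of the proof.  Applying the strong-order inequality $\rkm Y-\rkm X\leq \rk{M'}Y-\rk{M'}X$ for $M\les M'$ with $X=F$ and $Y=E$ yields $\rk{M'}F\leq \rkm F+(\r{M'}-\r M)$, while $N\les N'$ implies $N\leq N'$ and hence $\rk{N'}F\geq\rkn F$.  Combining these with $\rkm F-\rkn F<i$,
$$
\rk{M'}F-\rk{N'}F \,\leq\, (\rkm F-\rkn F)+(\r{M'}-\r M) \,<\, i+(\r{M'}-\r M) \,=\, k,
$$
so $F\in\ide<k{M'}{N'}$, as required.  The main conceptual point is recognizing why the specific shift $k=i+\r{M'}-\r M$ is forced: it is exactly the amount by which $M\les M'$ can inflate $\rk{M'}F$ above $\rkm F$, and the bound on $\rk{N'}F$ coming from $N\leq N'$ goes in the opposite direction.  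A direct rank-function proof of (2) is possible but requires case analysis on which branch of the $\min$ is active in each of the two Higgs lifts, whereas working with flats localizes the argument to one set at a time.
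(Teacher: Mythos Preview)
Your proof is correct.  For part (1) you take a slightly more direct route than the paper, which argues via the independent-set description $\ci(\higgs iMN)=\ci(M)\cap\ideal\leq$ from Theorem~\ref{thm:hcrypt}(1); your rank-function argument is shorter and avoids that detour.

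For part (2) the approaches genuinely differ.  The paper does not attempt the combined estimate.  Instead it first observes, directly from Theorem~\ref{thm:hcrypt}(6), that $N\les N'$ alone gives $\higgs iMN\les\higgs iM{N'}$ (here the lift index does not change), and then obtains the ``vary $M$'' half by duality: writing $\higgs iMN=(\higgs j{N^*}{M^*})^*$ via Proposition~\ref{pro:hdual}, applying the first step to $(M')^*\les M^*$, and dualizing back produces $\higgs iMN\les\higgs k{M'}N$ with the correct shift $k=i+\r{M'}-\r M$ emerging from the index bookkeeping of Proposition~\ref{pro:hdual}.  The general statement then follows by composing the two special cases.  Your argument instead handles both hypotheses at once with the rank-gap inequality $\rk{M'}F-\rk{N'}F\leq(\rkm F-\rkn F)+(\r{M'}-\r M)$.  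The payoff of your approach is that it makes transparent \emph{why} the shift $k-i=\r{M'}-\r M$ is exactly right, whereas in the paper that value appears only after the duality calculation; the payoff of the paper's approach is that it avoids the rank-gap estimate entirely and illustrates once more the utility of Proposition~\ref{pro:hdual}.
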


\begin{proof}
  Suppose $N\leq N'$ and $M\leq M'$.  By part (1) of Theorem
  \ref{thm:hcrypt}, it suffices to show that $\ci (M)\cap\ide\leq
  iMN\subseteq \ide \leq i{M'\!}{N'}$ in order to prove that $\higgs
  iMN\leq\higgs i{M'}{N'\!}$.  
To see the inclusion, note that if $X\in\ci (M)\cap\ide\leq iMN$,
  then $|X|=\rkm X=\rk{M'}X$ and $i\geq |X|-\rkn X \geq |X|-\rk{N'}X$,
  and thus $X\in\ide\leq i{M'\!}{N'}$.

  It is immediate from part (6) of Theorem \ref{thm:hcrypt} that
  $\cf (N)\subseteq\cf (N')$ implies $\cf (\higgs iMN) \subseteq\cf
  (\higgs iM{N'\!})$, that is, $N\les N'$ implies $\higgs iMN\les\higgs
  iM{N'\!}$.  Now suppose that $M\les M'$, and let $j=\r M -\r N
  -i$. Using this result, together with Proposition \ref{pro:hdual},
  applied twice, we have
  \begin{align*}
    \higgs iMN &\= (\higgs j{N^*}{M^*\!})^*\\
    &\,\les\, (\higgs j{N^*}{(M')^*\!})^*\\
    &\= \higgs k{M'}N
  \end{align*}
  where $k=\r{N^*}-\r{(M')^*}-j=i+\r{M'}-\r M$.
\end{proof}

We end this section with a result that expresses minors of Higgs lifts
as Higgs lifts of corresponding minors.  Recall that, by convention,
$\higgs iMN = N$, for $i<0$.

\begin{pro}\label{pro:hminor}
  If $N$ and $M$ are matroids on $E$ with $N\les M$, then for all
  $A\subseteq E$ and $i\in\bZ$,
  $$
  \higgs iMN|A \= \higgs i{M|A}{N|A} \sands \higgs iMN/A \= \higgs
  {i-k}{M/A}{N/A},
  $$
  where $k = \rkm{A} -\rkn{A}$.
\end{pro}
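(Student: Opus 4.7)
The plan is to verify each identity directly at the level of rank functions, using the formula $\rkl X=\min\{\rkm X,\rkn X+i\}$ that defines $L=\higgs iMN$.

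First I would note that both minors on the right-hand sides are well-defined: from $N\les M$ and the characterization $\rkn Y-\rkn X\leq \rkm Y-\rkm X$ for $X\subseteq Y\subseteq E$, it is immediate that $N|A\les M|A$ and $N/A\les M/A$, so the Higgs lifts $\higgs i{M|A}{N|A}$ and $\higgs{i-k}{M/A}{N/A}$ exist (where $i-k\geq 0$; the convention handles $i-k<0$). Note also that $k=\rkm A-\rkn A\geq 0$ since $\rank M-\rank N$ is order-preserving.

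For the restriction identity, the proof is immediate: for $X\subseteq A$, the ranks $\rkm X$ and $\rkn X$ are unaffected by restricting to $A$, so
$$
r_{L|A}(X)\=\rkl X\=\min\{\rkm X,\rkn X+i\}\= \min\{r_{M|A}(X),r_{N|A}(X)+i\},
$$
which is the rank of $X$ in $\higgs i{M|A}{N|A}$.

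The contraction identity is where the parameter shift $i\mapsto i-k$ arises, and I would split on whether $A\in\ideal\leq$ or $A\in\ideal>$ (equivalently, whether $i\geq k$ or $i<k$). If $i\geq k$, then $\rkl A=\rkm A$, and for $X\subseteq E-A$ a direct computation gives
\begin{align*}
r_{L/A}(X)&\=\rkl{X\cup A}-\rkm A\\
&\=\min\{\rkm{X\cup A},\rkn{X\cup A}+i\}-\rkm A\\
&\=\min\{\rkm{X\cup A}-\rkm A,\,\rkn{X\cup A}-\rkn A+(i-k)\},
\end{align*}
which is precisely the rank of $X$ in $\higgs{i-k}{M/A}{N/A}$. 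If instead $i<k$, then $A\in\ideal>\subseteq\ideal\geq$, so $\rkl A=\rkn A+i$; since $\ideal\geq$ is an order filter, $X\cup A\in\ideal\geq$ for every $X\subseteq E-A$, hence $\rkl{X\cup A}=\rkn{X\cup A}+i$, and subtracting yields $r_{L/A}(X)=r_{N/A}(X)$. Thus $L/A=N/A$, which agrees with $\higgs{i-k}{M/A}{N/A}$ by the convention $\higgs jPQ=Q$ for $j<0$.

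There is no real obstacle here; the only subtle point is keeping track of the two regimes $i\geq k$ and $i<k$ in the contraction argument, and the observation that the order-filter property of $\ideal\geq$ forces $L/A$ to collapse to $N/A$ in the second regime, exactly matching the negative-index convention that was set up for \higgs iMN.
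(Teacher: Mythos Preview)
Your proof is correct. The restriction identity is handled exactly as in the paper, by reading off the rank formula. For the contraction identity, however, the paper takes a different route: rather than computing ranks directly, it invokes Proposition~\ref{pro:hdual} twice to write $\higgs iMN/A=((\higgs iMN)^*\backslash A)^*=(\higgs j{N^*}{M^*\!}\backslash A)^*$, applies the already-proven restriction identity, and then dualizes back, tracking the index shift through the two applications of duality. Your direct approach instead splits on whether $A\in\ideal\leq$ or $A\in\ideal>$ and computes $r_{L/A}$ explicitly in each regime, using the order-filter property of $\ideal\geq$ to collapse the second case to $N/A$. The duality argument is slicker in that it avoids any case analysis and makes the symmetry between restriction and contraction manifest; your argument, on the other hand, is self-contained (it does not rely on Proposition~\ref{pro:hdual}) and makes transparent exactly why the shift $i\mapsto i-k$ appears and why the negative-index convention is the right one. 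One small point: both your argument and the paper's tacitly assume $i\geq 0$ when invoking the $\min$ formula for $\rkl{}$; the case $i<0$ is immediate from the convention $\higgs iMN=N$, and you should say so explicitly.
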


\begin{proof}
  The result obviously holds for $i<0$ so assume $i\geq 0$.  Let
  $L=\higgs iMN$. If $X\subseteq A$, then
  \begin{align*}
    \rk{L|A}X \= & \, \rkl X  \\
    \= & \, \min\{\rkm X,\, \rkn X+i\} \\
    \= & \, \min\{\rk{M|A}X,\,  \rk{N|A}X + i\} \\
    \= & \, \rk{\higgs i{M|A}{N|A}}X,
  \end{align*} and so the first equality in the theorem holds.  The
  second equality follows from the first by duality, as follows:
  using Proposition \ref{pro:hdual} twice, we have
  \begin{align*}
    \higgs iMN/ A \= & \, ((\higgs iMN)^*\backslash A)^* \\
    \= & \, (\higgs j{N^*}{M^*\!}\backslash A)^* \\
    \= & \, (\higgs j{N^*\backslash A} {M^*\backslash A})^* \\
   \= & \, \higgs \ell{(M^*\backslash A)^*}{(N^*\backslash A)^*\!} \\
   \= & \, \higgs \ell{M/A}{N/A}
  \end{align*}
  where $j=\r M -\r N - i$ and  
$$
\ell\=\r{N^*\backslash A} -
\r{M^*\backslash A} -j \=i -\rkm{A} +\rkn{A}.
$$ 
Thus the second equality in the theorem holds.
\end{proof}

\section{Splices}\label{sec:splice}

In the first subsection below we introduce 
splices of matroids and show that all
splices of $M(A)$ and $N(B)$ lie in a certain interval
$[\loo N, \ist M ]\sub\les$ in the strong order on $\mathcal{M}(A\cup B)$;
indeed, all splices have the same rank, and, if we order the matroids
in $[\loo N, \ist M]\sub\les$ of this rank by the weak order, then the
splices are a filter.  In the second subsection we show 
that this filter is nonempty and hence $M$ and $N$
have a freest splice, given by a Higgs lift of \loo N\ towards \ist M;
this is the free splice $M\fsp N$. 
This subsection also treats duals of
free splices, the effect of the free splice operation on the weak and strong
orders, and a variety of equivalent formulations of the free splice.

\subsection{Matched matroids and splices}

We say that matroids $M(A)$ and $N(B)$ are \emph{matched} or, more
precisely, that the ordered pair $(M,N)$ is matched, if $M$ and $N$
agree on $A\cap B$ in the sense that $M.(A\cap B)=N|(A\cap B)$. In
particular, any pair of matroids on disjoint sets is matched; also,
the pair $(M,N)$ is matched if and only if $(N^*\!,M^*)$ is matched.
If $M(A)$ and $N(B)$ are matched, then a matroid $L$ on $A\cup B$ is
called a \emph{splice} of $M$ and $N$ if $M=L|A$ and $N=L.B$.  We
write \splice MN\ for the set of all splices of $M$ and $N$.  Note
that, for any matroid $L(A\cup B)$, the matroids $L|A$ and $L.B$ are
matched and $L\in\splice{L|A}{L.B}$. Also note that $L\in\splice MN$
if and only if $L^*\in\splice{N^*\!}{M^*}$, and that the rank of
any $L\in\splice MN$ is equal to $\rkm{A-B}+\r N=
\r M+\r N -\rkn{A\cap B}$.
  
Figure~\ref{whirl} shows matched matroids $M$ and $N$, together with the
set $\splice MN$, ordered by the weak order.
The freest of these splices is the $3$-whirl, $\mathcal{W}^3$.
Section~\ref{sec:class} contains additional examples of splices.

\begin{figure}
\begin{center}
\includegraphics[width = 4.5truein]{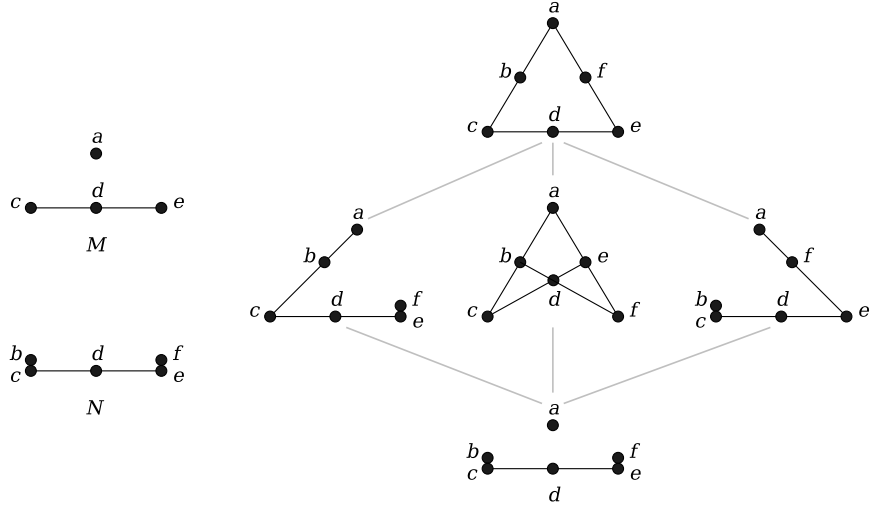}
\end{center}
\caption{Matched matroids $M$ and $N$, along with their splices; the
  grey lines indicate the weak order.}\label{whirl}
\end{figure}

If $M(A)$ and $N(B)$ are matched, we denote by $\ist M$ the matroid $M
\oplus I(B-A)$, obtained by adjoining the elements of
$B-A$ to $M$ as isthmuses, and by $\loo N$ the matroid $N\oplus
I^*(A-B)$, obtained by adjoining the elements of $A-B$ to $N$ as loops.
Note that $\rk{\ist M}X =\rkm{X\cap A} +|X-A|$ and $\rk{\loo N}X =
\rkn{X\cap B}$ for all $X\subseteq A\cup B$.

\begin{pro}\label{pro:quot}
  If $M(A)$ and $N(B)$ are matched, then $\loo N\les\ist M$.
\end{pro}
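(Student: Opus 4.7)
The plan is to use the rank-difference characterization of the strong order recalled in the preliminaries: $\loo N \les \ist M$ iff
$$\rk{\ist M}Y-\rk{\ist M}X\;\geq\;\rk{\loo N}Y-\rk{\loo N}X$$
for all $X\subseteq Y\subseteq A\cup B$. Using the explicit rank formulas $\rk{\ist M}X=\rkm{X\cap A}+|X\cap(B-A)|$ and $\rk{\loo N}X=\rkn{X\cap B}$ given just before the proposition, and reducing to the case $Y=X\cup e$ by adding one element at a time, it suffices to show that for each $X\subseteq A\cup B$ and each $e\in(A\cup B)-X$,
$$\rk{\ist M}{X\cup e}-\rk{\ist M}X\;\geq\;\rk{\loo N}{X\cup e}-\rk{\loo N}X.$$

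I would then split into three cases according to where $e$ lies. If $e\in A-B$, the left side equals $\rkm{(X\cap A)\cup e}-\rkm{X\cap A}\in\{0,1\}$ and the right side is $0$. If $e\in B-A$, the left side equals $1$ (since $e$ is an isthmus in $\ist M$), while the right side is $\rkn{(X\cap B)\cup e}-\rkn{X\cap B}\leq 1$. Both cases are immediate.

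The main obstacle is the remaining case $e\in A\cap B$, which is where the matched condition enters. Writing $X_A=X\cap A$, $X_B=X\cap B$, and $X_{AB}=X\cap A\cap B$, the required inequality becomes
$$\rkm{X_A\cup e}-\rkm{X_A}\;\geq\;\rkn{X_B\cup e}-\rkn{X_B}.$$
The plan is to chain three inequalities, bridging through the $A\cap B$-level subset $X_{AB}$, which is where the hypothesis $M.(A\cap B)=N|(A\cap B)$ can be invoked. First, submodularity of $\rank M$ applied to $X_A\subseteq X_A\cup(A-B)=X_{AB}\cup(A-B)$ shows that the left side is at least $\rkm{X_{AB}\cup(A-B)\cup e}-\rkm{X_{AB}\cup(A-B)}$, which by the definition of contraction equals $r_{M.(A\cap B)}(X_{AB}\cup e)-r_{M.(A\cap B)}(X_{AB})$. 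The matched hypothesis then identifies this with $\rkn{X_{AB}\cup e}-\rkn{X_{AB}}$. Finally, submodularity of $\rank N$ applied to $X_{AB}\subseteq X_B$ gives $\rkn{X_{AB}\cup e}-\rkn{X_{AB}}\geq\rkn{X_B\cup e}-\rkn{X_B}$. Concatenating these three inequalities completes the argument; once one sees that the hypothesis controls only the increment at the $A\cap B$-level set $X_{AB}$, the two bridging submodularity steps—one descending from $X_A$ to $X_{AB}$ in $M$, the other ascending from $X_{AB}$ to $X_B$ in $N$—are forced.
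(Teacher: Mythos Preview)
Your argument is correct. You use the rank-increment characterization of the strong order and verify it element by element, invoking submodularity twice in the key case $e\in A\cap B$ to bridge from $X_A$ down to $X_{AB}$ in $M$ and then back up from $X_{AB}$ to $X_B$ in $N$, with the matched hypothesis supplying the identification at the $A\cap B$ level. This is sound.

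The paper takes a different and somewhat shorter route, using the flat characterization $\cf(\loo N)\subseteq\cf(\ist M)$ instead. Every flat of $\loo N$ has the form $(A-B)\cup F$ with $F\in\cf(N)$; then $F\cap A\in\cf(N|(A\cap B))=\cf(M.(A\cap B))$, so $(A-B)\cup(F\cap A)\in\cf(M)$, and adjoining the isthmuses $B-A$ shows $(A-B)\cup F\in\cf(\ist M)$. The paper's argument is more structural and avoids the case split, exploiting directly the well-known fact that flats of a contraction $M.(A\cap B)$ are exactly the sets whose union with $A-B$ is a flat of $M$. Your approach, on the other hand, makes the role of submodularity explicit and shows concretely how the matched condition controls the rank increment; it would also adapt easily if one wanted a quantitative bound on $\rank{\ist M}-\rank{\loo N}$. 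Both proofs are of comparable difficulty once one has chosen the characterization of $\les$ to work with.
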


\begin{proof}
  If $F\in\cf (N)$, then $F\cap A\in\cf (N|(A\cap B))=\cf (M.(A\cap
  B))$; therefore $(A-B)\cup (F\cap A)\in\cf (M)$.  The inclusion
  $B-A\subseteq\isthm{\ist M}$ now gives $(A-B)\cup F \in\cf(\ist M)$
  for any $F\in\cf (N)$.  Since any flat of \loo N\ has the form
  $(A-B)\cup F$ for some $F\in\cf (N)$, we get $\cf (\loo
  N)\subseteq\cf (\ist M)$.
\end{proof}
 The next result identifies some order-theoretic structure of the set
  of splices of two matched matroids.
\begin{pro}\label{pro:spfilt}
  If $M(A)$ and $N(B)$ are matched, then \splice MN\ is a filter in
  the weak order on the set $\mathcal{L}=\{L\in [\loo N, \ist
  M]\sub\les\co \r L-\r N =\rkm{A-B}\}$.
\end{pro}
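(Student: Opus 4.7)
The plan is to split the proof into two parts: first show $\splice MN \subseteq \mathcal{L}$, and then show $\splice MN$ is upward-closed in $(\mathcal{L}, \leq)$.

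For the containment $\splice MN \subseteq \mathcal{L}$, fix $L \in \splice MN$. The rank condition is immediate from $L.B = L/(A-B)$ and $L|A = M$: indeed, $\r N = \r L - \rkl{A-B} = \r L - \rkm{A-B}$. For $\loo N \les L$, I would verify $\cf(\loo N) \subseteq \cf(L)$. Since the elements of $A-B$ are loops of $\loo N$, every flat of $\loo N$ has the form $F \cup (A-B)$ with $F \in \cf(N)$; and because $\cf(N) = \cf(L.B)$ is in bijection, via $F \mapsto F \cup (A-B)$, with the flats of $L$ that contain $A-B$, each such set $F \cup (A-B)$ is a flat of $L$. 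The containment $L \les \ist M$ then follows by duality: $L^* \in \splice{N^*}{M^*}$, so the argument just given, applied to the dual pair, yields $\loo{M^*} \les L^*$; combined with the direct calculation $(\ist M)^* = (M \oplus I(B-A))^* = M^* \oplus I^*(B-A) = \loo{M^*}$, this gives $L \les \ist M$.

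For the filter property, suppose $L \in \splice MN$, $L' \in \mathcal{L}$, and $L \leq L'$ in the weak order. Since $L' \les \ist M$ implies $L' \leq \ist M$, and the weak order is preserved by restriction, $L'|A \leq (\ist M)|A = M$; meanwhile $L \leq L'$ gives $M = L|A \leq L'|A$, so $L'|A = M$. Similarly, $\loo N \les L'$ yields $N = (\loo N).B \les L'.B$ upon contracting $A-B$ (flats of the quotient being precisely the images of flats containing $A-B$). The rank condition in $\mathcal{L}$, together with $L'|A = M$, gives $\r{L'.B} = \r{L'} - \rkm{A-B} = \r N$; and from $N \les L'.B$ with $\r N = \r{L'.B}$, the rank-difference inequality applied at $X = \emptyset$ gives $\rk N X \leq \rk{L'.B}X$ while the same inequality at $Y = B$ gives the reverse, forcing $N = L'.B$. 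Hence $L' \in \splice MN$.

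The principal bookkeeping step is verifying $\loo N \les L$ via flats; from there, the dual containment, the monotonicity of restriction and contraction under the weak and strong orders, and the standard fact that two matroids related by $\les$ with equal ranks must coincide combine straightforwardly to deliver the filter property. No single step is especially delicate, but care is required in transferring arguments between the two endpoints of the interval via the identification $(\ist M)^* = \loo{M^*}$.
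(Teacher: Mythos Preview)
Your proof is correct and follows essentially the same strategy as the paper's: verify the strong-order containments and rank condition to get $\splice MN\subseteq\mathcal L$, then use order-preservation of restriction/contraction to establish the filter property. The only differences are cosmetic: the paper proves $L\les\ist M$ directly and obtains $\loo N\les L$ by duality (you do the reverse), and for the filter step the paper gets $P.B=N$ by dualizing the sandwich argument (using that duality preserves weak order on equal-rank matroids) rather than invoking the equal-rank-quotient fact as you do.
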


\begin{proof}
  Let $L\in\splice MN$.  Since $L|A=M$, we have $\cf (M) =\{X\cap A\co
  X\in\cf (L)\}$ and hence $\cf (L)\subseteq \cf (M_1)=\{X\subseteq
  A\cup B\co X\cap A\in\cf (M)\}$, that is, $L\les\ist M$.  Now
  $L\in\splice MN$ gives $L^*\in\splice{N^*}{M^*}$, hence
  $L^*\les\ist{(N^*)}=(\loo N)^*$ or, equivalently, $\loo N\les L$.
  It is clear that $\r L-\r{\loo N}=\rkm{A-B}$; hence $\splice
  MN\subseteq\mathcal L$.

  Suppose that $L\in\splice MN$ and that $P\in\mathcal L$ satisfies
  $L\leq P$.  Restriction preserves both the weak and strong orders;
  hence $M=L|A\leq P|A\les\ist M|A=M$, so $P|A=M$.  Since duality
  reverses strong order, and preserves weak order for matroids of the
  same rank, we have $L^*\leq P^*\les (\loo N)^*=\ist{(N^*)}$ and so the
  same argument gives $P^*|B=N^*$\!, that is, $P.B=N$.  Hence
  $P\in\splice MN$.
\end{proof}

\subsection{The free splice}\label{subsec:free}

Given matched matroids $M(A)$ and $N(B)$, we denote by $M\fsp N$ the
Higgs lift \higgs i{\ist M}{\loo N}, where $i=\rkm{A-B}$.  We will
show that $M\fsp N$ is the freest splice of $M$ and $N$.  Denote by
$\ideal<$, or $\idea <MN$, the order ideal $\ide <i{\ist M}{\loo N}$, and
similarly define \ideal\leq, \ideal >, \ideal\geq, and \ideal =.
Hence $$\ideal< =\{X\subseteq A\cup B\co \rk{\ist M}X-\rk{\loo
  N}X<\rkm{A-B}\},$$ that is,
$$
\ideal <\=\{X\subseteq A\cup B\co \rkm{X\cap A}+|X-A| < \rkn{X\cap B}
+\rkm{A-B}\}.
$$
The rank function of $M\fsp N$ is thus given by
\begin{align}\label{eq:rank}
  \rk{M\fsp N}X & \= \min\{\rkm{X\cap A} + |X-A|,\,\rkn{X\cap B}+\rkm{A-B}\}\\
&\=
\begin{cases}
\rkm{X\cap A} + |X-A| &
\text{if $X\in\ideal\leq$,}\\
\rkn{X\cap B}+\rkm{A-B}&
\text{if $X\in\ideal\geq$},\notag
\end{cases}
\end{align}
for all $X\subseteq A\cup B$.

Note that $\idea \leq {N^*}{M^*}=\ide \leq j{\ist {(N^*)}}{\loo
    {(M^*)}}$, where $j=\rk{N^*}{B-A}$. It follows that $i+j =
  \r{\ist M}-\r{\loo N}$, and so by Proposition \ref{pro:hdual}
we have the following lemma.
\begin{lem}\label{lem:little}
 If $M$ and $N$ are matched, then $\idea \leq {N^*\!}{M^*}=
\idea\geq MN^c$.
\end{lem}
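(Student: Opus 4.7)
The plan is essentially the content of the paragraph immediately preceding the lemma: verify a single rank identity, and then invoke Proposition~\ref{pro:hdual}. First I would unpack both sides in Higgs-lift notation. By the definition of $M\fssp N$, $\idea{\geq}{M}{N}^c$ equals $\ide{\geq}{i}{\ist M}{\loo N}^c$ with $i=\rkm{A-B}$; the corresponding definition applied to $N^*\fssp M^*$ gives $\idea{\leq}{N^*}{M^*}=\ide{\leq}{j}{\ist{(N^*)}}{\loo{(M^*)}}$ with $j=\rk{N^*}{B-A}$. The identities $(\ist M)^*=\loo{(M^*)}$ and $(\loo N)^*=\ist{(N^*)}$, both immediate from $(X\oplus I)^*=X^*\oplus I^*$, put these two ideals in the form to which Proposition~\ref{pro:hdual} applies.

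The crux of the argument is verifying the index compatibility $i+j=\r{\ist{(N^*)}}-\r{\loo{(M^*)}}$ required by Proposition~\ref{pro:hdual}. Expanding via $\r{\ist{(N^*)}}=\r{N^*}+|A-B|$, $\r{\loo{(M^*)}}=\r{M^*}$, and $\rk{N^*}{B-A}=|B-A|-\r N+\rkn{A\cap B}$, and using $|A|=|A-B|+|A\cap B|$ to cancel terms, the desired identity collapses to $\rkm{A-B}+\rkn{A\cap B}=\r M$. This is precisely the matched hypothesis in rank form: since $M.(A\cap B)=N|(A\cap B)$, the two matroids have the same rank, which is simultaneously $\r M-\rkm{A-B}$ (the rank of a contraction of $M$) and $\rkn{A\cap B}$ (the rank of a restriction of $N$).

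With the index identity in hand, I would apply Proposition~\ref{pro:hdual} to the quotient $\loo{(M^*)}\les\ist{(N^*)}$ (this strong-order relation being supplied by Proposition~\ref{pro:quot} applied to the matched pair $(N^*,M^*)$), yielding $\ide{\leq}{j}{\ist{(N^*)}}{\loo{(M^*)}}^c=\ide{\geq}{i}{\ist M}{\loo N}$; taking complements of both sides and then substituting the identifications from the first paragraph produces the stated equality $\idea{\leq}{N^*}{M^*}=\idea{\geq}{M}{N}^c$. The obstacle here is purely notational—keeping four different ideals and the several duality identifications organized—rather than conceptual.
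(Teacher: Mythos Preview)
Your proposal is correct and follows essentially the same route as the paper: unpack $\idea{\leq}{N^*}{M^*}$ as $\ide{\leq}{j}{\ist{(N^*)}}{\loo{(M^*)}}$ with $j=\rk{N^*}{B-A}$, verify the index identity $i+j=\r{\ist M}-\r{\loo N}$ (your form $i+j=\r{\ist{(N^*)}}-\r{\loo{(M^*)}}$ is the same equation, since $(\ist M)^*=\loo{(M^*)}$ and $(\loo N)^*=\ist{(N^*)}$), and invoke Proposition~\ref{pro:hdual}. The paper's proof is the terse one-line version of exactly this argument; your additional detail in reducing the index identity to the matched condition $\rkm{A-B}+\rkn{A\cap B}=\r M$ is correct and makes explicit what the paper leaves to the reader.
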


Given sets $A\subseteq B\subseteq E$, we use the standard notation $[A,B]$ for
the interval $\{X\co A\subseteq X\subseteq B\}$ in the Boolean algebra $2^E$.
\begin{lem}\label{lem:intideal}
  If $M(A)$ and $N(B)$ are matched, then
  \begin{enumerate}[\rm (1)\,]
  \item 
    $[\emptyset, A]\subseteq\ideal\leq$; also $X\in [\emptyset , A]$
    belongs to \ideal =\ if and only if $A-B\subseteq \clm X$,
    \item 
    $[A-B, A\cup B]\subseteq\ideal\geq$; also $X\in [A-B, A\cup B]$
    belongs to \ideal =\ if and only if $B-A\subseteq\cl{N^*}{B-X}$,
    that is, if and only if $X-A\subseteq\isthm{N|(X\cap B)}$,
  \item 
    $[A-B,A]\subseteq\ideal =$.
  \end{enumerate}
\end{lem}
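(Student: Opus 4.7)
The plan is to prove (1), (2), (3) in order, with (3) following immediately from (1) and (2). The key computational tool in both (1) and (2) is the matched hypothesis $M.(A\cap B)=N|(A\cap B)$, which I will use in the rewritten form
$$\rkm{Y\cup (A-B)} \= \rkn{Y}+\rkm{A-B}\qquad\text{for all } Y\subseteq A\cap B,$$
obtained from the definition of the rank function of a contraction.

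For (1), I would take $X\in[\emptyset, A]$, so $|X-A|=0$ and $X\cap B\subseteq A\cap B$. Applying the matched identity with $Y=X\cap B$ converts the defining inequality of $\ideal\leq$ into $\rkm{X}\leq\rkm{(X\cap B)\cup (A-B)}$, which holds by monotonicity of $\rank M$ since $X\subseteq A$ forces $X=(X\cap B)\cup (X-B)\subseteq (X\cap B)\cup (A-B)$. Equality (i.e., $X\in\ideal=$) then holds precisely when $(X\cap B)\cup (A-B)\subseteq\clm{X}$; because $X\cap B\subseteq X\subseteq\clm X$ automatically, the nontrivial part is $A-B\subseteq\clm X$.

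For (2), I would take $X\in[A-B,A\cup B]$ and set $X'=X\cap A\cap B$, so that $X\cap A=(A-B)\cup X'$ (disjointly) and $X\cap B=X'\sqcup (X-A)$. Applying the matched identity with $Y=X'$ gives $\rkm{X\cap A}=\rkn{X'}+\rkm{A-B}$, so the $\ideal\geq$ inequality reduces to $\rkn{X\cap B}\leq\rkn{X'}+|X-A|$, which is immediate from $X\cap B=X'\sqcup(X-A)$. For the equality characterization, $X\in\ideal=$ unpacks to $\rkn{X\cap B}=\rkn{X'}+|X-A|$, i.e., $\rk{(N|(X\cap B))\backslash (X-A)}=\rk{N|(X\cap B)}-|X-A|$. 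Using the standard identity $\rk{L\backslash F}=\rk{L}-|F|+\rk{L^*}{F}$, this is equivalent to $\rk{(N|(X\cap B))^*}{X-A}=0$, i.e., $X-A\subseteq\isthm{N|(X\cap B)}$. Finally, since $(N|(X\cap B))^*=N^*.(X\cap B)=N^*/(B-X)$, the loops of this minor are the elements of $X\cap B$ lying in $\cl{N^*}{B-X}$; combining this with the fact that $(B-A)-X\subseteq B-X\subseteq\cl{N^*}{B-X}$ holds for free, one sees $X-A\subseteq\isthm{N|(X\cap B)}$ is the same as $B-A\subseteq\cl{N^*}{B-X}$.

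For (3), any $X\in[A-B,A]$ lies in both $\ideal\leq$ by (1) and $\ideal\geq$ by (2), hence in $\ideal=$. The main obstacle is the equality case in (2), where one must translate the rank equation into the closure/coloop formulation stated in the lemma; once the duality identification $(N|(X\cap B))^*=N^*/(B-X)$ is in hand, this is routine.
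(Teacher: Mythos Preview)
Your proof is correct. Part (1) is essentially identical to the paper's argument (the paper writes $\rkm{X\cup(A-B)}$ where you write $\rkm{(X\cap B)\cup(A-B)}$, but for $X\subseteq A$ these are the same set), and part (3) is handled the same way in both.

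The genuine difference is in part (2). The paper does not compute directly: instead it invokes the duality result $\idea\leq{N^*\!}{M^*}=\idea\geq MN^c$ (Lemma~\ref{lem:little}, which in turn rests on Proposition~\ref{pro:hdual}) and simply observes that $A-B\subseteq X$ implies $(A\cup B)-X\subseteq B$, so part (1) applied to the matched pair $(N^*,M^*)$ gives $(A\cup B)-X\in\idea\leq{N^*\!}{M^*}$, hence $X\in\ideal\geq$; the equality criterion $B-A\subseteq\cl{N^*}{B-X}$ then drops out of the equality criterion in (1) for free. Your route is a direct computation paralleling (1): you use the matching identity on $Y=X\cap A\cap B$ to reduce to $\rkn{X\cap B}\leq\rkn{X'}+|X-A|$, and then unpack the equality case via the dual-rank identity and the identification $(N|(X\cap B))^*=N^*/(B-X)$. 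The paper's approach is shorter and emphasizes that (2) is literally the dual of (1); your approach is more self-contained (no reliance on Lemma~\ref{lem:little} or the Higgs-lift duality) and has the virtue of exhibiting explicitly why the two formulations $B-A\subseteq\cl{N^*}{B-X}$ and $X-A\subseteq\isthm{N|(X\cap B)}$ coincide, a point the paper leaves to the reader.
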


\begin{proof}
  If $X\subseteq A$, then $\rk{\ist M}X =\rkm X$ and
  \begin{align*}
    \rk{\loo N}X \= & \, \rkn{X\cap B} \\
    \= & \, \rk{N|(A\cap B)}{X\cap B} \\
   \= & \, \rk{M.(A\cap B)} {X\cap B} \\
   \= & \, \rkm{X\cup (A-B)}-\rkm{A-B}.
  \end{align*}
  Since $\rkm X\leq\rkm{X\cup (A-B)}$, we thus have $\rk{\ist
    M}X\leq\rk{\loo N}X+\rkm{A-B}$, with equality if and only if
  $A-B\subseteq\clm X$.  Hence (1) holds.  The second statement is the
  dual of the first: if $A-B\subseteq X$, then $(A\cup B)-X\subseteq B$ and
  so, by part (1) together with Lemma~\ref{lem:little}, we have
  $(A\cup B)-X\in\idea\leq{N^*\!}{M^*}= \idea\geq MN^c$, that is,
  $X\in\ideal\geq$, with equality if and only if
  $B-A\subseteq\cl{N^*}{B-X}$.  The third statement is an obvious
  consequence of the first two.
\end{proof}

\begin{cor}\label{cor:intrank}
  If $M(A)$ and $N(B)$ are matched, then a matroid $L(A\cup B)$
  belongs to \splice MN\ if and only if $\rkl X =\rk{M\fsp N}X$ for
  all $X\in [\emptyset, A]\cup [A-B, A\cup B]$.
\end{cor}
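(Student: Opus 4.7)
The plan is to translate the splice conditions $L|A=M$ and $L.B=N$ into rank equalities on the two intervals, and then use Lemma~\ref{lem:intideal} together with the rank formula (\ref{eq:rank}) for $M\fsp N$ to show that these rank equalities are precisely $\rkl X=\rk{M\fsp N}X$ on $[\emptyset, A]\cup [A-B, A\cup B]$.

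First I would record the ``dictionary'' for $\rk{M\fsp N}$ on the two intervals. By Lemma~\ref{lem:intideal}(1), $[\emptyset,A]\subseteq\ideal\leq$, so (\ref{eq:rank}) gives $\rk{M\fsp N}X=\rkm{X\cap A}+|X-A|=\rkm X$ for all $X\in[\emptyset,A]$. By Lemma~\ref{lem:intideal}(2), $[A-B,A\cup B]\subseteq\ideal\geq$, so (\ref{eq:rank}) gives $\rk{M\fsp N}Y=\rkn{Y\cap B}+\rkm{A-B}$ for all $Y\in[A-B,A\cup B]$.

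Next I would translate the two defining conditions of a splice. The equality $L|A=M$ is simply $\rkl X=\rkm X$ for all $X\in[\emptyset,A]$, which by the paragraph above is $\rkl X=\rk{M\fsp N}X$ on this interval. For contraction, $L.B=L/(A-B)$ (viewed on $B$), so for $X\subseteq B$ one has $\rk{L.B}X=\rkl{X\cup(A-B)}-\rkl{A-B}$; writing $Y=X\cup(A-B)$ (so $Y\cap B=X$) shows that $L.B=N$ is equivalent to $\rkl Y=\rkn{Y\cap B}+\rkl{A-B}$ for all $Y\in[A-B,A\cup B]$, the case $Y=A-B$ pinning down $\rkl{A-B}=\rkm{A-B}$. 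Substituting this back gives $\rkl Y=\rkn{Y\cap B}+\rkm{A-B}=\rk{M\fsp N}Y$ on $[A-B,A\cup B]$.

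Combining these two translations yields the corollary in both directions. The argument is a routine unpacking of definitions; the only point that needs a moment of care is the contraction computation, specifically verifying that the condition at the single set $Y=A-B$ forces $\rkl{A-B}=\rkm{A-B}$, which is what allows the passage from $\rkl Y=\rkn{Y\cap B}+\rkl{A-B}$ to $\rkl Y=\rkn{Y\cap B}+\rkm{A-B}$ and makes the two halves of the condition mesh consistently on their overlap $[A-B,A]\subseteq\ideal=$ guaranteed by Lemma~\ref{lem:intideal}(3).
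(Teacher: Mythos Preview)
Your approach is exactly the paper's: compute $\rk{M\fsp N}$ on the two intervals via Lemma~\ref{lem:intideal} and Equation~(\ref{eq:rank}), then translate $L|A=M$ and $L.B=N$ into the same rank expressions. The argument is correct, with one slip in the write-up worth fixing.

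You say that in the contraction equivalence $\rkl Y=\rkn{Y\cap B}+\rkl{A-B}$, ``the case $Y=A-B$'' pins down $\rkl{A-B}=\rkm{A-B}$. It does not: at $Y=A-B$ that equation reads $\rkl{A-B}=\rkn\emptyset+\rkl{A-B}$, a tautology. The equality $\rkl{A-B}=\rkm{A-B}$ you need comes instead from the \emph{restriction} condition you already handled: $A-B\in[\emptyset,A]$, so $L|A=M$ gives $\rkl{A-B}=\rkm{A-B}$ directly (and, in the reverse direction, the rank hypothesis on $[\emptyset,A]$ does the same). Once you source that equality correctly, the substitution step and the rest of the argument go through exactly as you wrote, and the two halves do agree on the overlap $[A-B,A]$ as you note.
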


\begin{proof}
  Lemma \ref{lem:intideal} and Equation \eqref{eq:rank} give $\rk{M\fsp
    N}X= \rkm X$ for all $X\subseteq A$ and $\rk{M\fsp N}X=\rkn{X\cap
    B}+\rkm{A-B}$ for all $X\supseteq A-B$.  However, $L\in\splice MN$
  if and only if $L|A=M$ and $L.B=N$, that is, if and only if $\rkl X
  = \rkm X$ for all $X\subseteq A$ and $\rkl X =\rkn{X\cap
    B}+\rkm{A-B}$ for all $X\supseteq{A-B}$.
\end{proof}

\begin{thm}\label{thm:freest} 
  If $M$ and $N$ are matched, then $M\fsp N$ is the freest splice of
  $M$ and $N$, that is, $M\fsp N$ is the maximum element of \splice
  MN\ in weak order.
\end{thm}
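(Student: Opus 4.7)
The plan is to reduce the result to two earlier pieces of infrastructure: Corollary~\ref{cor:intrank}, which characterizes splices of $M$ and $N$ purely by rank values on the intervals $[\emptyset, A]$ and $[A-B, A\cup B]$, and Proposition~\ref{pro:hmax}, which says that the Higgs lift is the weak-order maximum among matroids in its ``rank slice'' of the relevant strong-order interval.

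First I would verify that $M\fsp N$ is itself a splice of $M$ and $N$. By Corollary~\ref{cor:intrank}, it suffices to compute $\rk{M\fsp N}X$ for $X$ in $[\emptyset, A]\cup [A-B, A\cup B]$ and see that the values agree with those in the statement of that corollary. For $X\subseteq A$, Lemma~\ref{lem:intideal}(1) places $X$ in $\ideal\leq$, so Equation~\eqref{eq:rank} gives $\rk{M\fsp N}X = \rkm{X\cap A}+|X-A| = \rkm X$; for $X\supseteq A-B$, Lemma~\ref{lem:intideal}(2) places $X$ in $\ideal\geq$, so Equation~\eqref{eq:rank} gives $\rk{M\fsp N}X = \rkn{X\cap B}+\rkm{A-B}$. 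These are exactly the two rank identities demanded by Corollary~\ref{cor:intrank}, so $M\fsp N\in\splice MN$.

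For the maximality, let $L\in\splice MN$. By Proposition~\ref{pro:spfilt}, $L$ lies in the set $\mathcal{L}=\{L'\in [\loo N,\ist M]\sub\les\co \r{L'}-\r{\loo N}=\rkm{A-B}\}$. Since $\loo N\les \ist M$ by Proposition~\ref{pro:quot}, applying Proposition~\ref{pro:hmax} with $M$, $N$, $i$ replaced by $\ist M$, $\loo N$, $i=\rkm{A-B}$ identifies $M\fsp N=\higgs i{\ist M}{\loo N}$ as the weak-order maximum of $\mathcal{L}$. Therefore $L\leq M\fsp N$, completing the proof.

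There is essentially no real obstacle here: the machinery developed in Section~\ref{sec:prelim} (especially Proposition~\ref{pro:hmax}) together with Proposition~\ref{pro:spfilt} and Corollary~\ref{cor:intrank} do all the work, and the only bookkeeping is to check which of $\ideal\leq$ or $\ideal\geq$ a given subset falls into. The conceptual content lies not in this theorem but in having arranged the definition $M\fsp N=\higgs{\rkm{A-B}}{\ist M}{\loo N}$ so that the rank index $i$ exactly matches the common rank $\r L-\r{\loo N}=\rkm{A-B}$ shared by every splice $L$ of $M$ and $N$.
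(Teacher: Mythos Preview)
Your proof is correct and follows essentially the same route as the paper's: invoke Corollary~\ref{cor:intrank} to see that $M\fsp N\in\splice MN$, then combine Proposition~\ref{pro:spfilt} with Proposition~\ref{pro:hmax} for maximality. The only difference is that your first step is more explicit than necessary---since Corollary~\ref{cor:intrank} characterizes splices by the condition $\rkl X=\rk{M\fsp N}X$ on the two intervals, taking $L=M\fsp N$ makes membership in $\splice MN$ tautological, so the rank computations you carry out (while correct) merely reprove the content of that corollary's proof rather than apply its statement.
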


\begin{proof}
  It is immediate from Corollary \ref{cor:intrank} that $M\fsp
  N\in\splice MN$. Hence the result follows directly from Propositions
  \ref{pro:hmax} and \ref{pro:spfilt}.
\end{proof}

We refer to $M\fsp N$ as the \emph{free splice} of $M$ and $N$.  Note
that if $A\subseteq B$, and so $M=N|A$, then $M\fsp N=N$ is the unique
splice of $M$ and $N$. Similarly, if $B\subseteq A$ then $M\fsp N=M$
is the unique splice of $M$ and $N$.  If $A$ and $B$ are disjoint,
then the free splice $M\fsp N$ is the \emph{free product}
$M\frp N$ of $M$ and $N$, which was introduced in \cite{crsc:fpm}.  In
this case, the set \splice MN\ has minimum element $M\oplus N$.  The
following example shows that, in general, \splice MN\ does not have a
unique minimal element.

\begin{exa}\label{exa:mins}
  Let $M$ be the uniform matroid $U_{2,3}$ on $\{a,b,c\}$.  Let $N$ be
  the rank-$2$ matroid on $\{b,c,d,e\}$ in which $\{b,c\}$ is the only
  nonspanning circuit.  The free splice of $M$ and $N$ is a plane on
  $\{a,b,c,d,e\}$ in which $\{a,b,c\}$ is the only nonspanning
  circuit. There are only two other splices of $M$ and $N$; one has
  $b,d,e$ collinear and the other has $c,d,e$ collinear.  Thus,
\splice MN\ has more than one minimal element. 
\end{exa}

The next result gives the simple relation between the free splice and
duality.

\begin{pro}\label{pro:dual}
  If $M(A)$ and $N(B)$ are matched, then $(M\fsp N)^* = N^*\fsp M^*$.
\end{pro}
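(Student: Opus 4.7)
The plan is to derive this directly from Proposition~\ref{pro:hdual} (the duality formula for Higgs lifts), after unwinding the definitions on both sides and using the matched condition.

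First, I would record two elementary duality identities: $(\ist M)^* = \loo{(M^*)}$ and $(\loo N)^* = \ist{(N^*)}$. Both follow from the fact that $M \oplus I(B-A)$ has dual $M^* \oplus I^*(B-A)$, together with the observation that $B-A$ (resp.\ $A-B$) plays the role of the added loops (resp.\ isthmuses) for $M^*$ (resp.\ $N^*$). By definition, $M\fsp N = \higgs{i}{\ist M}{\loo N}$ with $i=\rkm{A-B}$, and $N^*\fsp M^* = \higgs{j}{\ist{(N^*)}}{\loo{(M^*)}}$ with $j=\rk{N^*}{B-A}$.

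Next, I apply Proposition~\ref{pro:hdual} to $(M\fsp N)^*$. This gives
\[
(M\fsp N)^* \= \bigl(\higgs{i}{\ist M}{\loo N}\bigr)^* \= \higgs{k}{(\loo N)^*}{(\ist M)^*} \= \higgs{k}{\ist{(N^*)}}{\loo{(M^*)}},
\]
where $k = \r{\ist M} - \r{\loo N} - i$. So the proof reduces to showing $k = j$.

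The main (and only nontrivial) step is this index calculation, where the matched hypothesis enters. Expanding, $\r{\ist M} = \r M + |B-A|$ and $\r{\loo N} = \r N$, so
\[
k \= \r M + |B-A| - \r N - \rkm{A-B}.
\]
On the other hand, by the standard dual rank formula,
\[
j \= \rk{N^*}{B-A} \= |B-A| - \r N + \rkn{A\cap B}.
\]
So $k=j$ is equivalent to $\r M - \rkm{A-B} = \rkn{A\cap B}$, which is exactly the assertion that $\r{M.(A\cap B)} = \r{N|(A\cap B)}$; this holds by the matched hypothesis $M.(A\cap B) = N|(A\cap B)$. This is the heart of the argument, though it is purely bookkeeping once the matched condition is in hand. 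Combining these steps yields $(M\fsp N)^* = N^*\fsp M^*$.
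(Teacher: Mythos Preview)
Your proof is correct and follows essentially the same route as the paper's first argument: the paper says the result is immediate from Proposition~\ref{pro:hdual} and Lemma~\ref{lem:little}, and your index computation $k=j$ (equivalently $i+j=\r{\ist M}-\r{\loo N}$) together with the identities $(\ist M)^*=\loo{(M^*)}$ and $(\loo N)^*=\ist{(N^*)}$ is exactly what underlies Lemma~\ref{lem:little}. The paper also notes an alternative proof via Theorem~\ref{thm:freest} and the fact that duality is a poset isomorphism $\splice{M}{N}\to\splice{N^*}{M^*}$, which you do not use, but your direct approach is the one the paper privileges.
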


\begin{proof}
 The result is immediate  from Proposition \ref{pro:hdual} and
Lemma \ref{lem:little}.
Alternatively, it follows from
  Theorem \ref{thm:freest} and the fact that the duality operator
  is a poset isomorphism $\splice MN\rta\splice{N^*\!}{M^*}$.
\end{proof}

The following proposition shows that the free splice operation
preserves the weak order and that the operator defined by taking the
free splice with a fixed matroid, on either the left or the right,
preserves the strong order.

\begin{pro}\label{pro:splorder}
  Suppose that $M(A)$ and $N(B)$ are matched.
  \begin{enumerate}[\rm (1)\,]
  \item If $(M'\!, N')$ is matched, with $M\leq M'$ and $N\leq N'$, then
    $M\fsp N\leq M'\fsp N'$.
  \item If $(M, N')$ is matched and $N\les N'$, then $M\fsp N\les
    M\fsp N'$.
  \item If $(M'\!, N)$ is matched and $M\les M'$, then $M\fsp N\les
    M'\fsp N$.
  \end{enumerate}
\end{pro}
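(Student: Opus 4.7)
The plan is to reduce each of the three parts to Proposition \ref{pro:horder}, the analogous order-preservation results for Higgs lifts, using the definition $M\fsp N=\higgs{i}{\ist M}{\loo N}$ with $i=\rkm{A-B}$ together with the fact that the Higgs lift is monotone in its index parameter: the rank formula $\min\{\rkm X,\,\rkn X+i\}$ is plainly nondecreasing in $i$, so $\higgs iMN\leq\higgs{i'}MN$ in the weak order whenever $i\leq i'$. Throughout we will use the elementary fact that both constructions $M\mapsto\ist M$ and $N\mapsto \loo N$ preserve the weak and strong orders, which is immediate from standard facts about direct sums.

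For part (1), two indices appear: $i=\rkm{A-B}$ for $M\fsp N$ and $i'=\rk{M'}{A-B}$ for $M'\fsp N'$, and $M\leq M'$ gives $i\leq i'$. Also $\ist M\leq\ist{M'}$ and $\loo N\leq\loo{N'}$, so Proposition \ref{pro:horder}(1) yields $\higgs i{\ist M}{\loo N}\leq \higgs i{\ist{M'}}{\loo{N'}}$, and monotonicity in the index gives $\higgs i{\ist{M'}}{\loo{N'}}\leq \higgs{i'}{\ist{M'}}{\loo{N'}}=M'\fsp N'$, chaining to the desired weak-order inequality.

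For part (2), the ambient matroid $\ist M$ is unchanged, and $N\les N'$ gives $\loo N\les\loo{N'}$. Applying Proposition \ref{pro:horder}(2) with both ambient matroids equal to $\ist M$ produces index shift $k=i+\r{\ist M}-\r{\ist M}=i$, so $M\fsp N\les M\fsp N'$ is immediate.

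The main obstacle is part (3), where the index shift in Proposition \ref{pro:horder}(2) is nontrivial. Here $\loo N$ is fixed while $M\les M'$ gives $\ist M\les\ist{M'}$, and Proposition \ref{pro:horder}(2) delivers $\higgs i{\ist M}{\loo N}\les \higgs k{\ist{M'}}{\loo N}$ with $k=i+\r{\ist{M'}}-\r{\ist M}=\rkm{A-B}+\r{M'}-\r M$. To identify the right-hand side as $M'\fsp N$, we must verify $k=\rk{M'}{A-B}$, equivalently $\r M-\rkm{A-B}=\r{M'}-\rk{M'}{A-B}$. But the matched conditions $M.(A\cap B)=N|(A\cap B)=M'.(A\cap B)$ force both sides of this identity to equal $\rkn{A\cap B}$, which closes the proof.
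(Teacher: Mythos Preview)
Your proof is correct. Parts (1) and (2) are essentially the paper's argument, with part (1) made slightly more explicit by separating out the index-monotonicity step that the paper leaves implicit.

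For part (3) the paper takes a different route: it invokes duality. Since $(M')^*\les M^*$, part (2) together with Proposition \ref{pro:dual} gives
\[
(M'\fsp N)^* \= N^*\fsp (M')^* \,\les\, N^*\fsp M^* \= (M\fsp N)^*,
\]
and hence $M\fsp N\les M'\fsp N$. Your direct application of Proposition \ref{pro:horder}(2) works equally well, but requires verifying that the shifted index $k=\rkm{A-B}+\r{M'}-\r M$ really equals $\rk{M'}{A-B}$; you handle this correctly by observing that the two matched conditions $M.(A\cap B)=N|(A\cap B)=M'.(A\cap B)$ force $\r M-\rkm{A-B}=\rkn{A\cap B}=\r{M'}-\rk{M'}{A-B}$. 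The duality argument is slicker in that it sidesteps this computation entirely, while your approach has the minor virtue of not depending on Proposition \ref{pro:dual}.
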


\begin{proof}
  Suppose $(M'\!,N')$ is matched. If $M\leq M'$ and $N\leq N'$, then
  $\ist M\leq\ist{(M')}$ and $\loo N\leq\loo{(N')}$; also,
  $\rkm{A-B}\leq\rk{M'}{A-B}$.  Thus, statement (1) follows
  immediately from part (1) of Proposition \ref{pro:horder}.

  If $(M,N)$ and $(M, N')$ are matched and $N\les N'$, then $\loo
  N\les\loo{(N')}$, so statement (2) follows from part (2) of
  Proposition \ref{pro:horder}.  Now if $(M', N)$ is matched and
  $M\les M'$, then $(M')^*\les M^*$, so by part (2) and Proposition
  \ref{pro:dual} we have
  \begin{align*}
    (M'\fsp N)^* & \= N^*\fsp (M')^*\\
    &\,\les\, N^*\fsp M^*\\
    &\= (M\fsp N)^*,
  \end{align*}
  and hence $M\fsp N\les M'\fsp N$.
\end{proof}

The equivalent formulations of the Higgs lift in Theorem
\ref{thm:hcrypt} yield
the following equivalent formulations of the
free splice.
\begin{thm}\label{thm:crypt}
  Suppose that $M(A)$ and $N(B)$ are matched. Let $L=M\fsp N$, $E=A\cup
  B$, $S=A-B$, and $r=\r L=\rkm S +\r N$.  \medskip
  \begin{enumerate}[\rm (1)\,]
  \item %
    $\ci (L) \= \{ X\subseteq E\co\text{$X\cap A\in\ci (M)$ and\/
      $|X|\leq\rkm S +\rkn{X\cap B}$}\}$.\\
  \item %
    $\cs (L)\= \{ X\subseteq E\co\text{$X\cap B\in\cs (N)$ and\/
      $\rkm{X\cap A} +|X-A|\geq r$}\}$.\\
  \item %
    $\cb (L)\=\{X\subseteq E\co
    \text{$|X|=r$, $X\cap A\in\ci (M)$, and\/ $X\cap B\in\cs (N)$}\}.$\\
  \item %
$\cc (L)\=\cc (M) \cup\cc$, where $\cc$ is the set of all
  $X\subseteq E$ such that $X\cap A\in\ci (M)$ and\/ $\isthm{N|(X\cap
    B)}=\emptyset$, with $\rkn{X\cap B} = |X|-\rkm S -1$.\\ 
\item %
    $\cll X\=
    \begin{cases}
      \clm{X\cap A}\cup X &
      \text{if $\rkm{X\cap A} +|X-A|<\rkm S +\rkn{X\cap B}$},\\
      \cln{X\cap B}\cup S
      &\text{otherwise},\\
    \end{cases}
    $

    \medskip
    \noindent
    for all $X\subseteq E$.\\
  \item %
    $\cf (L)= \cf_1\cup\cf_2$, where $\cf_2= \{S\cup F\co F\in\cf
    (N)\}$ and $\cf_1$ is the family $\{X\subseteq E\co\text{$X\cap
      A\in\cf (M)$ and\/
      $\rkm{X\cap A}+|X-A|<\rkm S +\rkn{X\cap B}$}\}$.\\
  \item %
    $\cz (L) = \cz_1\cup\cz_2\cup \cz_3$, where
    \begin{align*}
      \cz_1&\=
      \{ Z\in\cz (M)\co S\nsubseteq Z\},\\
      \cz_2 &\= \{ Z\in\cz (M)\co
      \text{$S\subseteq Z$ and $Z\cap B\in\cz (N)$}\},\\
      \cz_3 &\= \{ Z\subseteq E\co\text{$S\subseteq Z\nsubseteq A$ and
        $Z\cap B\in\cz (N)$}\}.
    \end{align*}
  \end{enumerate}
\end{thm}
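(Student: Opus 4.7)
The plan is to apply Theorem~\ref{thm:hcrypt} directly to the Higgs lift $\higgs{i}{\ist M}{\loo N}$ with $i = \rkm{S}$, which is $L = M \fsp N$ by definition. Each of the seven items then follows by a bookkeeping translation: rewrite the data of $\ist M$ and $\loo N$ (independent sets, spanning sets, circuits, closure, flats, and cyclic flats) in terms of $M$ and $N$, and rewrite the ideals $\ide{<}{i}{\ist M}{\loo N}$, $\ide{\leq}{i}{\ist M}{\loo N}$, etc.\ (which are precisely the families $\ideal{<}, \ideal{\leq}$, and so on defined just before the theorem) using the identity $\rk{\ist M}{X} - \rk{\loo N}{X} = \rkm{X \cap A} + |X - A| - \rkn{X \cap B}$ together with Lemma~\ref{lem:intideal}.

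The preliminary identifications are routine consequences of $\ist M = M \oplus I(B-A)$ and $\loo N = N \oplus I^*(A - B)$: a set $X \subseteq E$ is independent in $\ist M$ iff $X \cap A \in \ci(M)$; spanning in $\loo N$ iff $X \cap B \in \cs(N)$; $\cc(\ist M) = \cc(M)$; the closure of $X$ in $\ist M$ is $\clm{X \cap A} \cup (X - A)$, and in $\loo N$ is $\cln{X \cap B} \cup S$; the flats of $\ist M$ are the $F \subseteq E$ with $F \cap A \in \cf(M)$; the flats of $\loo N$ are the sets $S \cup F'$ with $F' \in \cf(N)$; the cyclic flats of $\ist M$ coincide with $\cz(M)$ (no cyclic set can contain an element of $B - A$, as these are isthmuses of $\ist M$); and the cyclic flats of $\loo N$ are the sets $S \cup F'$ with $F' \in \cz(N)$. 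With these substitutions in place, items~(1), (2), (3), (5), and (6) are immediate rewrites of the corresponding clauses of Theorem~\ref{thm:hcrypt}.

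For item~(4), beyond rewriting the ``new'' circuit family $\ci(\ist M) \cap \cf((\loo N)^*)^c \cap \ideal{\succ}$ (where $\cf((\loo N)^*)^c$ is the collection of cyclic sets of $\loo N$, and by the direct-sum structure this reduces to $\isthm{N|(X \cap B)} = \emptyset$), one must check the small point that $\cc(M) \subseteq \ideal{\leq}$, so that the contribution $\cc(\ist M) \cap \ideal{\leq}$ simplifies to $\cc(M)$. This holds because any $C \in \cc(M)$ satisfies $C \subseteq (C \cap B) \cup S$, hence $|C| - 1 = \rkm{C} \leq \rkm{(C \cap B) \cup S} = \rkn{C \cap B} + \rkm{S}$, where the last equality uses $N|(A \cap B) = M.(A \cap B)$.

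Item~(7) is the most involved translation: Theorem~\ref{thm:hcrypt}(7) gives $\cz(L) = (\cz(M) \cap \ideal{<}) \cup (\cz(\loo N) \cap \ideal{>}) \cup (\cz(M) \cap \cz(\loo N))$, and Lemma~\ref{lem:intideal} converts the ideal conditions into set-containment conditions. For $Z \in \cz(M)$ one has $Z \in [\emptyset, A]$, and part~(1) of the lemma gives $Z \in \ideal{=}$ iff $S \subseteq \clm{Z} = Z$, so $Z \in \ideal{<}$ iff $S \not\subseteq Z$, producing $\cz_1$. For $Z = S \cup F'$ with $F' \in \cz(N)$ one has $Z \supseteq A-B$, and part~(2) gives $Z \in \ideal{=}$ iff $Z - A \subseteq \isthm{N | F'} = \emptyset$; hence $Z \in \ideal{>}$ iff $Z \not\subseteq A$, producing $\cz_3$. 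The intersection $\cz(M) \cap \cz(\loo N)$ becomes $\cz_2$. No new ideas are required beyond Theorem~\ref{thm:hcrypt} and Lemma~\ref{lem:intideal}; the main obstacle is simply the patient bookkeeping of item~(7).
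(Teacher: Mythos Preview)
Your proposal is correct and follows essentially the same approach as the paper: apply Theorem~\ref{thm:hcrypt} to $\higgs{i}{\ist M}{\loo N}$ and translate using the obvious direct-sum identifications together with Lemma~\ref{lem:intideal}. The only cosmetic difference is that for item~(4) you verify $\cc(M)\subseteq\ideal\leq$ by a direct rank computation, whereas the paper simply cites Lemma~\ref{lem:intideal}(1) (since $\cc(M)\subseteq[\emptyset,A]\subseteq\ideal\leq$).
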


\begin{proof}
  Each statement of the theorem follows from its counterpart in
  Theorem \ref{thm:hcrypt}, with \ist M\ and \loo N\ replacing $M$
  and $N$, together with the observations below.
  \begin{enumerate}[(1)\,]
  \item %
    $\ci (\ist M) =\{X\subseteq E\co X\cap A\in\ci (M)\}$; also,
    for $X\in\ci (\ist M)$, we have
    $X\in\ideal\leq$ if and only if $|X|\leq\rkn{X\cap B}+\rkm
    S$.  \smallskip
  \item %
    $\cs (\loo N ) = \{X\subseteq E\co X\cap B\in\cs (N)\}$, and if
    $X\cap B\in\cs (N)$, then $\rkn{X\cap B}+\rkm S= r$.  \smallskip
  \item %
    The statement is immediate from the previous two.  \smallskip
  \item %
    Note that $\cc (\ist M)=\cc (M)$ and $\cc (M)\subseteq \ideal\leq$
    by part (1) of Lemma~\ref{lem:intideal}.  Also, $E-X\in \cf ((\loo
    N)^*)$ if and only if $X$ is a union of circuits of $\loo N$, that
    is, if and only if $\isthm{N|(X\cap B)}=\emptyset$.  If $X\in\ci (\ist M)$, that
    is, if $X\cap A\in\ci (M)$, then $X\in\ideal\succ$ if and only if
    $|X| -1 =\rkn{X\cap B}+\rkm S$.  \smallskip
  \item %
    $\cl{\ist M}X = \clm{X\cap A}\cup X$ and $\cl{\loo N}X= \cln{X\cap
      B}\cup S$ for all $X\subseteq E$.  \smallskip
  \item %
    $\cf (\ist M)=\{X\subseteq E\co X\cap A\in\cf (M)\}$ and $\cf
    (\loo N)=\cf_2$.  \smallskip
  \item %
 $\cz (\ist M) =\cz (M)$ and $\cz (\loo N)=\{Z\cup S\co Z\in\cz (N)\}$,
so $\cz (\ist M)\cap
  \cz (\loo N)=\cz_2$.  By Lemma \ref{lem:intideal}, we have $\cz
  (\ist M)\subseteq\ideal\leq$ and $\cz (\loo N)\subseteq\ideal\geq$.
  Lemma \ref{lem:intideal} also tells us that if $X\subseteq A$ is
closed in $M$, then $X\in\ideal<$ if and only if $S\nsubseteq X$,
  and that if $X\supseteq S$ is such that $X\cap B$ is cyclic in $N$,
  then $X\in\ideal>$ if and only if $X\nsubseteq A$.  Hence $\cz (\ist
  M)\cap\ideal<=\cz_1$ and $\cz (\loo N)\cap\ideal> = \cz_3$.
  \end{enumerate}
\end{proof}
 The following characterization of loops in free splices is immediate
  from part (5) of Theorem \ref{thm:crypt}, or directly from Equation
  \eqref{eq:rank}; duality gives the characterization of isthmuses.
\begin{cor}\label{cor:loopisth}
  If $M(A)$ and $N(B)$ are matched, then
\begin{align*}
\loops{M\fsp N}& \=
\begin{cases}
  \loops M \cup \loops N &
\text{if $A-B\subseteq \loops M$,}\\
\loops M &
\text{otherwise}
\end{cases}
\intertext{and}
\isthm{M\fsp N}& \=
\begin{cases}
  \isthm M \cup \isthm N &
\text{if $B-A\subseteq \isthm N$,}\\
\isthm N &
\text{otherwise}.
\end{cases}
\end{align*}
\end{cor}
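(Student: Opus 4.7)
The plan is to derive the loop characterization directly from Equation \eqref{eq:rank} applied to singletons, and then obtain the isthmus statement by dualizing via Proposition \ref{pro:dual}. Set $L = M \fsp N$ and $S = A - B$, and recall that $x \in \loops L$ iff $\rkl{\{x\}} = 0$, so in each case I just need to read off when this rank vanishes.

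I would carry out a three-way case analysis based on which part of $A \cup B$ contains $x$. For $x \in A - B$, Equation \eqref{eq:rank} gives $\rkl{\{x\}} = \min\{\rkm{\{x\}}, \rkm{S}\}$, which is $0$ iff $x \in \loops M$ or $S \subseteq \loops M$. For $x \in B - A$, it gives $\rkl{\{x\}} = \min\{1, \rkn{\{x\}} + \rkm{S}\}$, which is $0$ iff both $x \in \loops N$ and $S \subseteq \loops M$. For $x \in A \cap B$, it gives $\rkl{\{x\}} = \min\{\rkm{\{x\}}, \rkn{\{x\}} + \rkm{S}\}$, which is $0$ iff either $x \in \loops M$ or ($x \in \loops N$ and $S \subseteq \loops M$).

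I would then assemble the three cases. When $A - B \subseteq \loops M$, the loops of $L$ consist of all of $A - B$, all loops of $M$ in $A \cap B$, all loops of $N$ in $A \cap B$, and all loops of $N$ in $B - A$; since $\loops M \subseteq A$ and $\loops N \subseteq B$, this union is exactly $\loops M \cup \loops N$. When $A - B \not\subseteq \loops M$, i.e., $\rkm{S} \geq 1$, every ``$S \subseteq \loops M$'' disjunct fails, no element of $B - A$ can be a loop, and the condition on $x \in A$ reduces to $x \in \loops M$; hence $\loops L = \loops M$. Finally, the isthmus statement follows by applying the loop characterization to $(M \fsp N)^* = N^* \fsp M^*$: the roles of $A$ and $B$ swap, $A - B$ becomes $B - A$, and $\loops{N^*} = \isthm N$ together with $\loops{M^*} = \isthm M$ converts the dichotomy to the one asserted. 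The only ``obstacle'' is bookkeeping the case analysis; each singleton rank computation is immediate from Equation \eqref{eq:rank}.
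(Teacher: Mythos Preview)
Your proof is correct and follows essentially the same route the paper indicates: the paper states that the loop characterization is immediate from Equation~\eqref{eq:rank} (or alternatively from the closure formula in part~(5) of Theorem~\ref{thm:crypt}) and that the isthmus statement follows by duality, which is precisely what you carry out. Your three-way case split on where $x$ lies and the assembly are accurate; the only minor redundancy is that for $x\in A-B$ the disjunct ``$S\subseteq\loops M$'' already implies $x\in\loops M$, but this does not affect the argument.
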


\section{Factorization with respect to free splice}\label{sec:factor}

If a matroid $L(E)$ factors with respect to the free splice operation,
the factorization is necessarily of the form $L=L|A\fsp L.B$ for some
ordered pair $(A,B)$ of subsets of $E$ with $A\cup B=E$. In this case
we call the pair $(A,B)$ a \emph{free separator} of $L$.  A free
separator $(A,B)$ is \emph{nontrivial} if both $A-B$ and $B-A$ are
nonempty.  A matroid is \emph{reducible} if it has a nontrivial free
separator; otherwise it is \emph{irreducible}.  In this section we
present a number of results on the structure of free separators.  In
particular, Theorem \ref{thm:freesep} characterizes free separators of
matroids in terms of their cyclic flats.  This result, which will be
used extensively throughout the rest of
  this paper, immediately gives a characterization of irreducible
  matroids.

We write $\fs L$ for the set of all free separators of $L$.  The set
$\fs L$ is partially ordered by setting $(A,B)\leq (A',B')$ if and
only if $A\subseteq A'$ and $B\subseteq B'$.  A free separator of $L$
is \emph{minimal} if it is minimal with respect to this ordering.
Note that the pair $(A, E-A)$ is a free separator of $L$ if and only
if $A$ is a free separator of $L$ in the sense of \cite{crsc:uft},
that is, if and only if $L$ is the free product $L|A\frp L/A$.  Note
that $(A,B)$ is a (minimal) free separator of $L$ if and only if
$(B,A)$ is a (minimal) free separator of $L^*$.

For example, the $3$-whirl $\mathcal{W}^3$ (the maximum element of
\splice MN\ in Figure~\ref{whirl})
has among its free separators
$(\{a,c,d,e\},\{b,c,d,e,f\})$ and $(\{a,c,d,e,f\},\{b,c,d,e,f\})$; the
first of these is minimal.

 The next lemma is used in proving the main result of this
  section, Theorem \ref{thm:freesep}.
\begin{lem}\label{lem:wkcyc}
  Suppose $L\leq P$ in the weak order.  If $\rkl Z=\rkp Z$ for
  all $Z\in\cz (L)$, then $L=P$.
\end{lem}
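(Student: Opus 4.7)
The plan is to prove $\rkp X \leq \rkl X$ for every $X \subseteq E$, which, combined with the weak-order hypothesis $\rkl X \leq \rkp X$, yields $L = P$. The key is to attach to each $X$ a cyclic flat $Z$ of $L$ whose rank (which is the same in $L$ and in $P$ by assumption) controls $\rkp X$ from above.

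Given $X \subseteq E$, let $F = \cll X$, let $T = \isthm{L|F}$, and set $Z = F \setminus T$. The central step is to check that $Z$ is a cyclic flat of $L$. The set $Z$ is cyclic because each $z \in Z$ is not an isthmus of $L|F$, hence lies in a circuit $C$ of $L|F$; since $C$ must be disjoint from $T$, we have $C \subseteq Z$, so $z$ belongs to a circuit of $L|Z$. To see that $Z$ is a flat, consider any $x \notin Z$. If $x \in T$, then $x$ is an isthmus of $L|F$, so $x \notin \cll{F - x}$, and the inclusion $Z \subseteq F - x$ forces $x \notin \cll Z$. If $x \notin F$, then $x \notin \cll F = F$, and $\cll Z \subseteq \cll F$ gives $x \notin \cll Z$.

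With $Z \in \cz(L)$ established, the fact that the elements of $T$ are isthmuses of $L|F$ yields $\rkl F = \rkl Z + |T|$; since $F = \cll X$, this becomes $\rkl X = \rkl Z + |T|$. Invoking the hypothesis $\rkl Z = \rkp Z$ together with the inclusion $X \subseteq Z \cup T$ and the standard rank inequality $\rkp{Z \cup T} \leq \rkp Z + |T|$, one obtains
\[
\rkp X \;\leq\; \rkp{Z \cup T} \;\leq\; \rkp Z + |T| \;=\; \rkl Z + |T| \;=\; \rkl X,
\]
as desired. The only genuine obstacle is confirming that the set $Z$ attached to $X$ is a cyclic flat of $L$; once that is in hand, the surrounding rank manipulations are routine.
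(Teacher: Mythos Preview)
Your argument is correct, but it takes a different route from the paper's proof. The paper argues by contradiction in two lines: if $L<P$ strictly, then some circuit $C\in\cc(L)$ is independent in $P$; then $Z=\cll C$ is a cyclic flat of $L$ with $\rkl Z=|C|-1<|C|=\rkp C\leq\rkp Z$, violating the hypothesis. Your proof is direct: for an arbitrary $X$ you manufacture a cyclic flat via the decomposition of the flat $\cll X$ into its coloop-free part $Z$ and its isthmuses $T$, and then bound $\rkp X$ using $\rkl Z=\rkp Z$. The paper's approach is quicker because it selects a set (a circuit) whose closure is already cyclic, so no isthmuses need to be stripped off; your approach, on the other hand, makes explicit the underlying structural fact that every flat is a cyclic flat plus coloops, and it avoids the (easy but separate) observation that strict weak order forces some $L$-circuit to be $P$-independent. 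Both are short and elementary; neither has an advantage in generality.
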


\begin{proof}
  If $L < P$, then there is some $C\in\cc (L)$ with $C\in\ci (P)$; however
  $Z=\cll C\in\cz (L)$ and $\rkl Z = |C|-1<|C|=\rkp C\leq\rkp Z$.
\end{proof}

\begin{thm}\label{thm:freesep}
  A pair of sets $(A,B)$ is a free separator of a matroid $L(A\cup B)$ if
  and only if $\cz (L)\subseteq [\emptyset, A]\cup
  [A-B, A\cup B]$.
\end{thm}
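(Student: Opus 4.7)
My plan is to argue the two implications separately, using Theorem~\ref{thm:crypt}(7) for the forward direction and a combination of Corollary~\ref{cor:intrank}, Theorem~\ref{thm:freest}, and Lemma~\ref{lem:wkcyc} for the converse.

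For the forward direction, suppose $L = L|A \fsp L.B$. Set $M = L|A$, $N = L.B$, and $S = A-B$. Theorem~\ref{thm:crypt}(7) writes $\cz(L)$ as the union of the three families $\cz_1, \cz_2, \cz_3$. Every element of $\cz_1$ is a subset of $A$ (being a cyclic flat of $M$ that omits $S$), every element of $\cz_2$ is a subset of $A$ containing $S=A-B$, and every element of $\cz_3$ contains $S=A-B$. Thus each element of $\cz(L)$ lies in $[\emptyset,A] \cup [A-B, A\cup B]$.

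For the converse, assume $\cz(L) \subseteq [\emptyset,A] \cup [A-B, A\cup B]$. Let $M = L|A$ and $N = L.B$; since deletion and contraction commute, $(M,N)$ is matched and $L \in \splice{M}{N}$. By Theorem~\ref{thm:freest}, $L \leq M\fsp N$ in the weak order. By Corollary~\ref{cor:intrank}, $\rkl X = \rk{M\fsp N}X$ for every $X \in [\emptyset,A] \cup [A-B, A\cup B]$. In particular, the hypothesis guarantees that this equality holds for every $Z \in \cz(L)$. Lemma~\ref{lem:wkcyc} then forces $L = M\fsp N$, so $(A,B) \in \fs L$.

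The main thing to verify carefully is the forward direction's case analysis on $\cz_1, \cz_2, \cz_3$; once that is set up, the converse is essentially a direct application of the three tools developed earlier (Theorem~\ref{thm:freest}, Corollary~\ref{cor:intrank}, and Lemma~\ref{lem:wkcyc}), so the only real obstacle is the bookkeeping needed to confirm that each of $\cz_1, \cz_2, \cz_3$ lies in the claimed interval, which is straightforward from the defining conditions on $S$ in Theorem~\ref{thm:crypt}(7).
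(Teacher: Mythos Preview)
Your proof is correct and follows essentially the same approach as the paper's: the forward direction via Theorem~\ref{thm:crypt}(7) and the converse via the combination of Theorem~\ref{thm:freest}, Corollary~\ref{cor:intrank}, and Lemma~\ref{lem:wkcyc}. Your write-up merely unpacks a little more of the case analysis on $\cz_1,\cz_2,\cz_3$ than the paper does.
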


\begin{proof}

Theorem \ref{thm:freest} implies that $L\leq L|A\fsp L.B$ for 
any matroid $L(A\cup B)$. If $\cz (L)\subseteq [\emptyset ,A]\cup
  [A-B, A\cup B]$, then it follows from Corollary \ref{cor:intrank} that
  $\rkl Z=\rk{L|A\fsp L.B}Z$ for all $Z\in\cz (L)$.  Hence Lemma
  \ref{lem:wkcyc} implies that $L=L|A\fsp L.B$, that is, $(A,B)$ is a
  free separator of $L$.  On the other hand, if $(A,B)$ is a free
  separator of $L$, then $\cz (L)\subseteq [\emptyset, A]\cup
  [A-B, A\cup B]$, by Theorem \ref{thm:crypt}.
\end{proof}

The following corollary is immediate.

\begin{cor}\label{cor:fsfilt}
  If $(A,B)$ is a free separator of $L(E)$, and $A', B'\subseteq E$
  satisfy $A\subseteq A'$ and $B\subseteq B'$, then $(A',B')$ is a
  free separator of $L$; in other words \fs L\ is an order filter in
  the Boolean algebra $2^E\times 2^E$.
\end{cor}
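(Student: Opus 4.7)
The plan is to reduce the corollary to Theorem \ref{thm:freesep}, which characterizes free separators purely in terms of where the cyclic flats sit. The hypothesis gives us $\cz(L)\subseteq [\emptyset, A]\cup [A-B, A\cup B]$, and we want to conclude $\cz(L)\subseteq [\emptyset, A']\cup [A'-B', A'\cup B']$. Note first that since $(A,B)$ is a free separator of $L(E)$, we have $A\cup B=E$; combined with $A\subseteq A'$, $B\subseteq B'$, and $A', B'\subseteq E$, this also gives $A'\cup B'=E$.

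First, I would handle any $Z\in\cz(L)$ with $Z\subseteq A$: since $A\subseteq A'$, such $Z$ lies in $[\emptyset, A']$, so it belongs to the desired union.

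Next, for $Z\in\cz(L)$ with $A-B\subseteq Z\subseteq A\cup B$, I need to show $A'-B'\subseteq Z$. This is the only non-cosmetic step: enlarging $A$ to $A'$ and $B$ to $B'$ usually enlarges containers, but here the \emph{lower} endpoint $A-B$ of the interval shrinks (it becomes $A'-B'$), so the inclusion is plausible. Concretely, pick $x\in A'-B'$; since $B\subseteq B'$ we have $x\notin B$, and since $x\in E=A\cup B$ it follows that $x\in A$, giving $x\in A-B\subseteq Z$. Thus $A'-B'\subseteq A-B\subseteq Z\subseteq E= A'\cup B'$, so $Z\in [A'-B', A'\cup B']$.

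Combining the two cases yields $\cz(L)\subseteq [\emptyset, A']\cup [A'-B', A'\cup B']$, and Theorem \ref{thm:freesep} then gives that $(A',B')$ is a free separator of $L$. The only subtle point is the reversed-direction containment $A'-B'\subseteq A-B$, but this falls out immediately from $A\cup B=E$; once that is noticed, the rest is bookkeeping.
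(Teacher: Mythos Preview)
Your proof is correct and follows exactly the route the paper intends: the paper simply declares the corollary ``immediate'' from Theorem~\ref{thm:freesep}, and you have supplied precisely the set-theoretic verification (in particular the key inclusion $A'-B'\subseteq A-B$, which uses $A\cup B=E$) that makes this immediacy explicit.
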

The following result  provides a much
more efficient test for irreducibility than direct application of
Theorem \ref{thm:freesep}.
\begin{cor}\label{cor:bfsep}
  A matroid $L(E)$ is irreducible if and only if, for each ordered
  pair $x,y$ of distinct elements of $E$, there is some $Z\in\cz (L)$
  with $x\in Z$ and $y\notin Z$.
\end{cor}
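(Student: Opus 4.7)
The plan is to derive this corollary directly from Theorem \ref{thm:freesep} by working with the contrapositive of the stated biconditional. The condition ``for each ordered pair $x,y$ of distinct elements there is $Z\in\cz(L)$ with $x\in Z$ and $y\notin Z$'' negates to ``there exist distinct $x,y$ such that every $Z\in\cz(L)$ containing $x$ also contains $y$,'' so we need to show that the latter holds if and only if $L$ is reducible.

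For the ``reducible implies some pair fails'' direction, suppose $(A,B)$ is a nontrivial free separator of $L$. Then $A-B$ and $B-A$ are both nonempty, so we may pick $y\in A-B$ and $x\in B-A$. By Theorem \ref{thm:freesep}, every $Z\in\cz(L)$ lies in $[\emptyset,A]\cup [A-B,A\cup B]$. If $x\in Z$ then $Z\nsubseteq A$, forcing $Z\supseteq A-B$, hence $y\in Z$. This exhibits the desired failing pair.

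For the converse, suppose distinct $x,y\in E$ are such that every cyclic flat containing $x$ also contains $y$. The key construction is to take $A=E-\{x\}$ and $B=E-\{y\}$; then $A\cup B=E$, $A-B=\{y\}$ and $B-A=\{x\}$, so $(A,B)$ is nontrivial. The hypothesis on $x,y$ translates exactly to the statement that every $Z\in\cz(L)$ satisfies $x\notin Z$ (i.e., $Z\subseteq A$) or $y\in Z$ (i.e., $Z\supseteq A-B$). Thus $\cz(L)\subseteq[\emptyset,A]\cup[A-B,A\cup B]$, and Theorem \ref{thm:freesep} yields that $(A,B)$ is a free separator, so $L$ is reducible.

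There is no real obstacle here: both directions reduce to a careful choice of the two elements $x,y$ and a direct invocation of Theorem \ref{thm:freesep}. The only mild subtlety worth flagging is the observation that the ``smallest nontrivial'' free separators, namely those of the form $(E-\{x\}, E-\{y\})$ for $x\ne y$, are already enough to witness reducibility; this is what makes the elementwise test in the corollary equivalent to the existence of any nontrivial free separator.
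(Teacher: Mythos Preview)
Your proof is correct and follows essentially the same approach as the paper: both directions invoke Theorem~\ref{thm:freesep}, with the key construction being the free separator $(E-\{x\},E-\{y\})$ in the ``failing pair implies reducible'' direction, and the choice $x\in B-A$, $y\in A-B$ in the converse. Your write-up merely spells out a bit more explicitly why the cyclic-flat containment condition of Theorem~\ref{thm:freesep} is met.
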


\begin{proof}
  If there is a pair $x,y\in E$ such that every $Z\in\cz(L)$
  containing $x$ also contains $y$, then, by Theorem
  \ref{thm:freesep}, the pair $(E-x, E-y)$ is a free separator of $L$.
  On the other hand, if $(A,B)$ is a free separator of $L$, and $x\in
  B-A$ and $y\in A-B$, then, again by Theorem \ref{thm:freesep}, any
  cyclic flat of $L$ that contains $x$ also contains $y$.
\end{proof}
We note the following immediate consequence of Corollary \ref{cor:bfsep}.
\begin{cor}
 If $|E|\geq 2$ and $L(E)$ contains either a loop or an isthmus, then
  $L$ is reducible.
\end{cor}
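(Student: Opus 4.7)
The plan is to apply Corollary \ref{cor:bfsep} directly: recall that $L$ is reducible precisely when there exists an ordered pair of distinct elements $x,y$ such that every cyclic flat of $L$ containing $x$ also contains $y$. So the goal is to exhibit such a pair in each of the two cases.

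First I would handle the loop case. Suppose $\ell\in\loops L$. For any flat $F$ of $L$, we have $\r(F\cup\ell)=\r(F)$, so $\ell\in\cl_L(F)=F$; hence every flat, and in particular every cyclic flat, contains $\ell$. Since $|E|\geq 2$, pick any $x\in E$ with $x\neq\ell$ and set $y=\ell$. Then every $Z\in\cz(L)$ that contains $x$ also contains $y$, so Corollary \ref{cor:bfsep} gives that $L$ is reducible.

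For the isthmus case I would invoke duality. If $s\in\isthm L$, then $s\in\loops{L^*}$, and $|E|\geq 2$, so by the loop case $L^*$ is reducible, that is, $L^*$ admits a nontrivial free separator $(B,A)$. The observation already recorded in Section \ref{sec:factor} (that $(A,B)\in\fs L$ if and only if $(B,A)\in\fs{L^*}$) then yields a nontrivial free separator $(A,B)$ of $L$, so $L$ is reducible. Alternatively, one can argue directly: an isthmus $s$ lies in no circuit of $L$, hence in no union of circuits, hence in no cyclic flat; thus taking $x=s$ and any $y\in E-s$, the condition of Corollary \ref{cor:bfsep} fails vacuously.

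There is essentially no obstacle here—the result is a direct corollary of Corollary \ref{cor:bfsep} together with the elementary facts that loops sit in every flat and isthmuses sit in no cyclic set. The only point requiring any care is making explicit use of the hypothesis $|E|\geq 2$ to ensure a second element $y$ (or $x$) distinct from the given loop or isthmus.
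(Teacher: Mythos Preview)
Your proof is correct and follows exactly the approach the paper intends: the paper records this corollary as an ``immediate consequence of Corollary \ref{cor:bfsep}'' without further argument, and your loop/isthmus analysis (loops lie in every flat; isthmuses lie in no cyclic set) is precisely how one unpacks that immediacy. The duality alternative for the isthmus case is a nice touch but not needed, since the direct argument you also supply is equally short.
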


Recall that elements $x$ and $y$ of a matroid $L$ are \emph{clones} if
the map switching $x$ and $y$ and fixing all other elements is an
automorphism of $M$.  Equivalently, $x$ and $y$ are clones if they are
contained in precisely the same cyclic flats.  The following corollary
is immediate from Corollary \ref{cor:bfsep}.

\begin{cor}
  Any matroid having clones is reducible.  Any identically self-dual
  matroid without clones is irreducible.
\end{cor}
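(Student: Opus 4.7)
The plan is to derive both assertions directly from Corollary \ref{cor:bfsep}, which characterizes irreducibility via the existence of cyclic flats separating each ordered pair of distinct elements.

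For the first assertion, I would take distinct clones $x, y \in E$ of $L$. By the equivalent characterization of clones recalled just above, $x$ and $y$ belong to precisely the same cyclic flats, so no $Z \in \cz(L)$ contains $x$ but not $y$. The separating condition of Corollary \ref{cor:bfsep} thus fails for the ordered pair $(x, y)$, whence $L$ is reducible.

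For the second assertion, I would argue by contraposition: assume $L(E)$ is identically self-dual and reducible, and aim to produce a pair of clones. First, Corollary \ref{cor:bfsep} supplies distinct $x, y \in E$ such that every $Z \in \cz(L)$ containing $x$ also contains $y$. The key additional ingredient will be the identity $\cz(M^*) = \{E - Z \co Z \in \cz(M)\}$ for any matroid $M$, which follows immediately from the identity $\cz(M) = \cf(M) \cap \cf(M^*)^c$ recorded in Section \ref{sec:prelim}. Identical self-duality gives $\cz(L) = \cz(L^*)$, so the family $\cz(L)$ is closed under complementation within $E$. Applying the separating implication to $E - Z$ in place of $Z$ then yields its converse: if $x \notin Z$ then $y \notin Z$. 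Combining the two shows that $x \in Z$ if and only if $y \in Z$ for every $Z \in \cz(L)$, so $x$ and $y$ are clones, contradicting the hypothesis.

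There is no substantial obstacle here; the only point meriting care is the distinction between identical and merely isomorphic self-duality, since only the former guarantees that $\cz(L)$ itself (not just a set equinumerous with it) is closed under complementation.
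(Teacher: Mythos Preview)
Your proposal is correct and matches the paper's approach: the paper gives no explicit proof, simply declaring the corollary immediate from Corollary~\ref{cor:bfsep}, and what you have written is precisely the natural unpacking of that claim. The only step beyond a direct citation of Corollary~\ref{cor:bfsep} is your use of the identity $\cz(L^*)=\{E-Z\co Z\in\cz(L)\}$ to convert identical self-duality into closure of $\cz(L)$ under complementation, which is indeed the intended argument and is justified exactly as you say.
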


\begin{cor}\label{cor:clones}
  For a matroid $L(E)$ and proper subsets $A$ and $B$ of $E$ with
  $A\cup B=E$, both $(A,B)$ and $(B,A)$ are free separators of $L$ if
  and only if all elements of $(A-B)\cup (B-A)$ are clones in $L$.
\end{cor}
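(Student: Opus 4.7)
The plan is to translate both free-separator conditions into statements about cyclic flats via Theorem \ref{thm:freesep}, and then to show that the combined condition is exactly the statement that every cyclic flat either avoids $(A-B)\cup(B-A)$ entirely or contains all of it, which by definition means those elements are pairwise clones.

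First I would record the preliminary observation that, since $A$ and $B$ are proper subsets of $E$ with $A\cup B=E$, both $S=A-B$ and $T=B-A$ are nonempty (otherwise $A\subseteq B$ or $B\subseteq A$ would force $A=E$ or $B=E$). This nonemptiness is what eliminates the ``mixed'' cases below.

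For the forward direction, I would take $Z\in\cz(L)$ and apply Theorem \ref{thm:freesep} to $(A,B)$ and to $(B,A)$, obtaining
\[
(Z\subseteq A \text{ or } S\subseteq Z) \sands (Z\subseteq B \text{ or } T\subseteq Z).
\]
There are four combinations. The combination $Z\subseteq A$ and $T\subseteq Z$ forces $T\subseteq A\cap(B-A)=\emptyset$, contradicting $T\neq\emptyset$; symmetrically, $S\subseteq Z$ and $Z\subseteq B$ is impossible. The remaining two combinations give either $Z\subseteq A\cap B$ (so $Z\cap (S\cup T)=\emptyset$) or $S\cup T\subseteq Z$. Hence every cyclic flat either contains all of $S\cup T$ or none of it, so the elements of $S\cup T$ lie in precisely the same cyclic flats and are therefore clones.

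For the backward direction, I would take $Z\in\cz(L)$ and use that, since all elements of $S\cup T$ are clones, either $S\cup T\subseteq Z$ or $Z\cap(S\cup T)=\emptyset$. In the second case $Z\subseteq E-(S\cup T)=A\cap B$, so $Z\in[\emptyset,A]\cap[\emptyset,B]$; in the first case $Z\in[A-B,A\cup B]\cap[B-A,A\cup B]$. Either way $Z$ lies in both $[\emptyset,A]\cup[A-B,A\cup B]$ and $[\emptyset,B]\cup[B-A,A\cup B]$, so by Theorem \ref{thm:freesep} both $(A,B)$ and $(B,A)$ are free separators of $L$. There is no serious obstacle here; the only subtle point is the use of properness of $A$ and $B$ to exclude the mixed cases in the forward direction.
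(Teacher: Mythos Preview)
Your proof is correct. The paper states this corollary without proof, treating it as immediate from Theorem~\ref{thm:freesep} together with the cyclic-flat characterization of clones; your argument is precisely the natural unpacking of that implication, including the use of the properness of $A$ and $B$ to rule out the mixed cases.
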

 We next address the question of
when a pair $(A',B')\in 2^E\times 2^E$ below a free
  separator $(A,B)$ of $L$ is also a free separator of $L$.
\begin{pro}\label{pro:red}
  Suppose $L(E)=M(A)\fsp N(B)$.  If $A'\subseteq A$ and $B'\subseteq
  B$ satisfy $A'\cup B'=E$, then $(A',B')\in\fs L$ if and only if
  $(A',A\cap B')\in\fs M$ and $(A'\cap B,B')\in\fs N$.
\end{pro}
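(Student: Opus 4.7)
The plan is to convert the three free-separator conditions via Theorem \ref{thm:freesep} into statements about cyclic flats, and then exploit the description $\cz(L) = \cz_1 \cup \cz_2 \cup \cz_3$ from Theorem \ref{thm:crypt}(7). Set $S = A - B$ and $T = (A' \cap B) - B'$; the hypothesis $A' \cup B' = E$ forces $A - B \subseteq A'$ and $B - A \subseteq B'$, so $A' - B' = S \cup T$ as a disjoint union. Thus $(A', B') \in \fs L$ becomes: every $Z \in \cz(L)$ satisfies $Z \subseteq A'$ or $S \cup T \subseteq Z$, and the conditions for $\fs M$ and $\fs N$ admit analogous reformulations.

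For the reverse direction, suppose both the $\fs M$- and $\fs N$-conditions hold, and take $Z \in \cz(L)$. If $Z \in \cz_1 \cup \cz_2$, then $Z \in \cz(M)$ and the $\fs M$-condition supplies the required dichotomy directly. If $Z \in \cz_3$, then $V := Z \cap B$ lies in $\cz(N)$, and $Z \nsubseteq A$ forces $V \cap (B - A) \neq \emptyset$; since $A' \cap B \subseteq A$ is disjoint from $B - A$, the $\fs N$-condition on $V$ must take the alternative $T \subseteq V$, which combined with $S \subseteq Z$ gives $S \cup T \subseteq Z$.

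For the forward direction, I will verify only the $\fs M$-part, then derive the $\fs N$-part by applying the $\fs M$-part to the dual factorization $L^* = N^* \fsp M^*$ from Proposition \ref{pro:dual}, using that $(A, B)$ is a free separator of a matroid iff $(B, A)$ is a free separator of its dual. For the $\fs M$-part, take $W \in \cz(M)$. If $S \nsubseteq W$ then $W \in \cz_1$, and if $S \subseteq W$ with $W \cap B \in \cz(N)$ then $W \in \cz_2$; in either case $W \in \cz(L)$, and applying the $\fs L$-condition to $W$ finishes the job.

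The main obstacle is the remaining sub-case $S \subseteq W$ with $W \cap B \notin \cz(N)$, where $W$ itself need not lie in $\cz(L)$. The strategy is to produce an auxiliary cyclic flat of $L$ in $\cz_3$ that nonetheless records the needed information about $W$. Standard facts about quotients show that $W - S = W \cap B$ is a flat of $M/S = N|(A \cap B)$, and that $M|W$ having no isthmuses transfers to $(M/S)|(W - S)$, so $W \cap B$ is a cyclic set of $N$. Its closure $V := \cln{W \cap B}$ is then a cyclic flat of $N$, and since $W \cap B$ is already closed within $A \cap B$, any additional elements of $V$ lie in $B - A$; because $W \cap B \notin \cz(N)$, this extra part is nonempty. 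Consequently $Z := S \cup V$ belongs to $\cz_3 \subseteq \cz(L)$. Applying the $\fs L$-condition to $Z$ rules out $Z \subseteq A'$ (since $Z$ meets $B - A \subseteq E - A'$), forcing $S \cup T \subseteq Z = S \cup V$ and hence $T \subseteq V$; since $T \subseteq A \cap B$ while $V \cap (A \cap B) = W \cap B$, we obtain $T \subseteq W$, which with $S \subseteq W$ yields $A' - B' \subseteq W$ as required.
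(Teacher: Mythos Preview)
Your proof is correct. The reverse direction is essentially identical to the paper's argument, just written out with slightly more care in ruling out the alternative $Z\cap B\subseteq A'\cap B$ for $Z\in\cz_3$.

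The forward direction, however, takes a genuinely different route. The paper dispatches it in one line using the standard fact that every cyclic flat of a restriction $L|A$ has the form $Z\cap A$ for some $Z\in\cz(L)$ (take $Z=\cll W$); the containment $\cz(L)\subseteq[\emptyset,A']\cup[A'-B',E]$ then immediately forces $\cz(M)\subseteq[\emptyset,A']\cup[A'-B',A]$. You instead perform a case analysis on whether $W\in\cz(M)$ already lies in $\cz(L)$, and in the delicate sub-case $S\subseteq W$, $W\cap B\notin\cz(N)$ you manufacture an auxiliary $Z=S\cup\cln{W\cap B}\in\cz_3$ to which the $\fs L$-condition can be applied. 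This works, and the supporting claims (that $W\cap B$ is a cyclic flat of $N|(A\cap B)$, that its $N$-closure meets $B-A$, etc.) are all sound, but it is substantially more labor than the paper's approach. The paper's method has the advantage of bypassing the $\cz_1,\cz_2,\cz_3$ decomposition entirely in this direction; your method has the minor virtue of being self-contained within the cyclic-flat description of $L$ already set up, without invoking the extra lemma about cyclic flats of restrictions. Both use duality for the $\fs N$-part.
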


\begin{proof}
  If $(A',B')\in\fs L$, then $\cz (L)\subseteq [\emptyset ,A']\cup
  [A'-B', E]$, by Theorem \ref{thm:crypt}.  Since any cyclic flat of
  $M=L|A$ is of the form $Z\cap A$, for some $Z\in\cz (L)$, it follows
  that $\cz (M)\subseteq [\emptyset, A']\cup [A'-B', A]=[\emptyset,
  A']\cup [A'-(A\cap B'), A]$ so, by Theorem \ref{thm:freesep}, we
  have $(A',A\cap B')\in\fs M$. Duality then gives $(A'\cap B,B')\in\fs N$.

  Now suppose $(A',A\cap B')\in\fs M$ and $(A'\cap B,B')\in\fs N$.  We
  have (i) $\cz (M)\subseteq [\emptyset,A']\cup [A'-B', A]$ and (ii)
  $\cz (N)\subseteq [\emptyset,A'\cap B]\cup [B-B', B]$ by Theorem
  \ref{thm:crypt} and simple manipulation.  We must show that each
  $Z\in\cz (L)$ is in $[\emptyset,A']\cup [A'-B', E]$.  This holds by
  inclusion (i) if $Z\in\cz(M)$, which is the case if $Z\in
  \cz_1\cup\cz_2$ (in the notation of Theorem~\ref{thm:crypt}, part
  (7)).  Otherwise, $Z\in \cz_3$, and so $A-B\subset Z$; also, since
  $Z\cap B\in\cz(N)$, inclusion (ii) gives $B-B'\subseteq Z\cap B$,
  and so, as needed, $A'-B'\subseteq Z$.
\end{proof}

 In the next corollary, part (1) follows from
  Proposition~\ref{pro:red}, part (2) follows from Theorem
  \ref{thm:freesep}, and part (3) follows from part (2).
\begin{cor}\mbox{}
  \begin{enumerate}[\rm (1)\,]
  \item If $(A,B)$ is a free separator of $L$, and $L|A$ and $L.B$ are
    irreducible, then $(A,B)$ is minimal in $\fs L$.
  \item Suppose $(A,B)\in\fs L$ and $x\in A\cap B$.
    \begin{enumerate}
    \item If $x\in\cll{A-B}$, then $(A, B-x)\in\fs L$.
    \item If $x\in\cl{L^*}{B-A}$, then $(A-x, B)\in\fs L$.
    \end{enumerate}
  \item If $(A,B)$ is a minimal free separator of $L$, and $M=L|A$ and
    $N= L.B$, then $A-B\in\cf (M)$ and $\isthm{N|(A\cap
      B)}=\emptyset$.
     \end{enumerate}
\end{cor}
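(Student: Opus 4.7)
The authors' outline tells us that (1) follows from Proposition \ref{pro:red}, (2) from Theorem \ref{thm:freesep}, and (3) from part (2). The plan is to elaborate each of these implications.

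For part (1), suppose $(A', B')\in\fs L$ with $(A', B')\leq (A, B)$. Proposition \ref{pro:red} yields $(A', A\cap B')\in\fs M$ and $(A'\cap B, B')\in\fs N$. Irreducibility of $M$ forces $(A', A\cap B')$ to be trivial; combined with the free-separator condition $A'\cup(A\cap B')=A$, this gives either $A'=A$ (when $A\cap B'\subseteq A'$) or $A\subseteq B'$ (when $A'\subseteq A\cap B'$, so $A\cap B'=A$). Irreducibility of $N$ symmetrically gives $B'=B$ or $B\subseteq A'$. The principal combination $A'=A$ and $B'=B$ directly gives $(A',B')=(A,B)$; the remaining combinations each force one of $A\subseteq B$, $B\subseteq A$, or $A=B$, and in these (essentially degenerate) configurations one verifies $(A',B')=(A,B)$ directly from $A'\subseteq A$, $B'\subseteq B$, and $A'\cup B'=E$.

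For part (2), we apply Theorem \ref{thm:freesep}: $(A,B)\in\fs L$ iff $\cz(L)\subseteq [\emptyset,A]\cup [A-B,\,A\cup B]$. For (a), we need the stronger inclusion $\cz(L)\subseteq [\emptyset,A]\cup [(A-B)\cup\{x\},\,A\cup B]$; for $Z\in\cz(L)$ with $Z\supseteq A-B$, closedness of $Z$ gives $Z\supseteq\cll{A-B}\ni x$, as desired, and $Z\subseteq A$ is handled by the first interval. For (b), we need $\cz(L)\subseteq [\emptyset,A-x]\cup [A-B,\,A\cup B]$; for $Z\in\cz(L)$ with $Z\subseteq A$, we have $B-A\subseteq E-Z\in\cf(L^*)$, hence $E-Z\supseteq\cl{L^*}{B-A}\ni x$, so $x\notin Z$ and $Z\subseteq A-x$.

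For part (3), minimality of $(A,B)$ forbids both constructions in (2). If $A-B\notin\cf(M)$, pick $x\in\clm{A-B}\setminus(A-B)$; since $\clm{A-B}\subseteq A$ and $x\notin A-B$, we have $x\in A\cap B$, and the identity $\clm X=\cll X\cap A$ (holding because $L|A=M$) gives $x\in\cll{A-B}$, so (2a) yields $(A,B-x)\in\fs L$ strictly below $(A,B)$, a contradiction. For the isthmus statement, $N|(A\cap B)$ has an isthmus iff $A\cap B$ is not cyclic in $N$ iff $B-A\notin\cf(N^*)$; using $N^*=L^*|B$ so that $\cl{N^*}{B-A}=\cl{L^*}{B-A}\cap B$, any witness $y\in\cl{N^*}{B-A}\setminus(B-A)$ lies in $A\cap B$ and in $\cl{L^*}{B-A}$, and (2b) produces $(A-y,B)<(A,B)$, another contradiction. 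The main obstacle is the case bookkeeping in part (1); once Theorem \ref{thm:freesep} is available, parts (2) and (3) reduce to direct applications together with the two closure identifications $\clm X=\cll X\cap A$ and $\cl{N^*}Y=\cl{L^*}Y\cap B$.
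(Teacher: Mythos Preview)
Your arguments for parts (2) and (3) are correct and match the paper's intended route exactly; the closure identifications $\clm X=\cll X\cap A$ and $\cl{N^*}Y=\cl{L^*}Y\cap B$ are precisely what is needed to translate the hypotheses of (3) into the hypotheses of (2).

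For part (1), your route via Proposition~\ref{pro:red} and the resulting case split are right, but the final sentence is wrong. In the case $A'=A$ and $B\subseteq A'$ (so $B\subseteq A$ and hence $E=A$), the constraints $A'\subseteq A$, $B'\subseteq B$, $A'\cup B'=E$ do \emph{not} force $B'=B$: since $A'=E$, the union condition holds for every $B'\subseteq B$. Concretely, let $L$ be any irreducible matroid on $E$ with $|E|\geq 2$, pick $x\in E$, and set $(A,B)=(E,\{x\})$; then $L|A=L$ is irreducible and $L.B$ is a one-element matroid (hence irreducible), yet $(E,\emptyset)$ is a free separator strictly below $(A,B)$. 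So the statement itself fails in these degenerate configurations, and the paper's one-line justification does not address them either. Your argument is complete and correct once you assume $(A,B)$ is nontrivial (i.e., $A-B\ne\emptyset$ and $B-A\ne\emptyset$): then the three ``degenerate'' combinations you list are vacuous, and only $A'=A$, $B'=B$ survives.
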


We next note that, for any pair $(A,B)$ of subsets of $E$
with $A\cup B=E$, the free splice operation determines
a closure operator on the set of matroids $\cm (E)$,
ordered by the weak order.

\begin{pro}
  For $A,B\subseteq E$ with $A\cup B=E$, the map $\varphi\subb AB\co
  \cm (E)\rta\cm (E)$ given by $\varphi\subb AB(L)=L|A\fsp L.B$ is a
  closure operator.  A matroid $M$ is $\varphi\subb AB$-closed if and
  only if $(A,B)$ is a free separator of $M$.  If $A'\subseteq A$,
  $B'\subseteq B$, and $A'\cup B'=E$, then
  $\varphi\subb{A'}{B'}(M)\geq\varphi\subb AB(M)$ for all $M\in\cm
  (E)$.
\end{pro}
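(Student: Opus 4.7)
The plan is to verify the three defining properties of a closure operator (extensivity, idempotence, monotonicity) in turn, then to identify the $\varphi_{A,B}$-closed elements, and finally to establish the comparison between $\varphi_{A,B}$ and $\varphi_{A',B'}$.

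Extensivity is immediate from Theorem \ref{thm:freest}: for any $L\in\cm(E)$, the pair $(L|A,L.B)$ is matched (as noted in the introduction, since deletion and contraction commute), and by construction $L\in\splice{L|A}{L.B}$, so $L\leq L|A\fsp L.B=\varphi_{A,B}(L)$.  For idempotence, set $P=\varphi_{A,B}(L)$; since the freest splice lies in $\splice{L|A}{L.B}$, we have $P|A=L|A$ and $P.B=L.B$, and hence $\varphi_{A,B}(P)=P|A\fsp P.B=L|A\fsp L.B=P$.

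I expect monotonicity ($L\leq L'$ implies $\varphi_{A,B}(L)\leq\varphi_{A,B}(L')$) to be the main obstacle, since the obvious route through Proposition \ref{pro:splorder}(1) would require $L.B\leq L'.B$, and contraction does not preserve the weak order in general.  Instead, my approach is to apply Equation \eqref{eq:rank} and rewrite the rank function of $\varphi_{A,B}(L)$ purely in terms of $r_L$.  Using $r_{L|A}(X\cap A)=r_L(X\cap A)$ together with $r_{L.B}(X\cap B)+r_L(A-B)=r_L((X\cap B)\cup(A-B))$ (which follows from the definition of contraction and the identity $E-B=A-B$), one obtains
\[
r_{\varphi_{A,B}(L)}(X)=\min\bigl\{r_L(X\cap A)+|X-A|,\;r_L((X\cap B)\cup(A-B))\bigr\}.
\]
Since $r_L\leq r_{L'}$ forces both terms in the minimum to grow when $L$ is replaced by $L'$, this yields $\varphi_{A,B}(L)\leq\varphi_{A,B}(L')$ at once.

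The characterization of closed elements then unpacks from the definition: $M$ is $\varphi_{A,B}$-closed iff $M=M|A\fsp M.B$, which is precisely the condition $(A,B)\in\fs M$.  For the last assertion, suppose $A'\subseteq A$ and $B'\subseteq B$ with $A'\cup B'=E$; by idempotence, $(A',B')\in\fs{\varphi_{A',B'}(M)}$, and the filter property from Corollary \ref{cor:fsfilt} promotes this to $(A,B)\in\fs{\varphi_{A',B'}(M)}$, so $\varphi_{A',B'}(M)$ is $\varphi_{A,B}$-closed.  Since $\varphi_{A',B'}(M)\geq M$ by extensivity, and $\varphi_{A,B}(M)$ is (by the standard consequence of extensivity, monotonicity, and idempotence) the least $\varphi_{A,B}$-closed matroid above $M$, we conclude $\varphi_{A,B}(M)\leq\varphi_{A',B'}(M)$.
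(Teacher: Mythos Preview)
Your proof is correct. Note that the paper actually states this proposition \emph{without} proof, so there is no argument to compare against; the authors evidently regard it as routine once the surrounding machinery is in place. Your observation that monotonicity is the only nontrivial point is apt: contraction does not preserve the weak order, so Proposition~\ref{pro:splorder}(1) does not apply directly, and your rewriting
\[
r_{\varphi_{A,B}(L)}(X)=\min\bigl\{r_L(X\cap A)+|X-A|,\; r_L\bigl((X\cap B)\cup(A-B)\bigr)\bigr\}
\]
neatly circumvents this by expressing everything through $r_L$ alone. The remaining parts (extensivity via Theorem~\ref{thm:freest}, idempotence from $P|A=L|A$ and $P.B=L.B$, and the comparison via Corollary~\ref{cor:fsfilt} and the least-closed-element characterization) are all straightforward and correctly argued.
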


Hence the correspondence $(A,B)\mapsto \varphi\subb AB$ is an
order-reversing map from the order filter $\{ (A,B)\co A\cup B=E\}$ in
$2^E\times 2^E$ to the set of closure operators on $\cm (E)$, with the
pointwise order.

We end this section with a result showing that, for matched matroids
$M(A)$ and $N(B)$, the Higgs lift \higgs i{\ist M}{\loo N}\ is in fact
a free splice for all $i$, not just for $i=\rkm{A-B}$.  First we
recall that, given a matroid $P(E)$, an integer $i$, and $A\subseteq
E$, the $i$-\emph{fold principal lift} $\rlift iAP$ and
\emph{principal truncation} $\rtrun iAP$ \emph{of} $P$ \emph{relative to} $A$
are the matroids on $E$ defined by
$$
\rlift iA P \= \higgs i{P\backslash A\oplus I(A)}P
\sands
\rtrun iA P \= \higgs jP{P/A\oplus I^*(A)},
$$
where $j=\r P -\r{P/A\oplus I^*(A)}-i=\rkp A-i$.  Note that
$(\rlift iAP )^* =\rtrun iA{(P^*)}$, by
Proposition \ref{pro:hdual}.

\begin{pro}\label{pro:hsplice}
  If $M(A)$ and $N(B)$ are matched, and $i=\rkm{A-B}$, then
  $$
  \higgs j{\ist M}{\loo N}\=
  (\rtrun{i-j}{A-B}M)\fsp\,\rlift{j-i}{B-A}N \=
  \begin{cases}
    M\fsp\,\rlift{j-i}{B-A}N &
    \text{if $j\geq i$,}\\
    (\rtrun{i-j}{A-B}M)\fsp N& \text{if $j\leq i$},
  \end{cases}
  $$
  for all $j$.
\end{pro}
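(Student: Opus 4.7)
The plan is to verify the equality by comparing rank functions. By definition, for $L=\higgs j{\ist M}{\loo N}$ we have
$$r_L(X) \= \min\{r_M(X\cap A)+|X-A|,\;r_N(X\cap B)+j\}$$
for all $X\subseteq A\cup B$. The two simplifications in the displayed equality on the right are immediate from the convention that a Higgs lift with non-positive parameter equals its base: for $j\geq i$ the truncation $\rtrun{i-j}{A-B}M$ reduces to $M$, and for $j\leq i$ the lift $\rlift{j-i}{B-A}N$ reduces to $N$. So it suffices to verify the two two-factor expressions separately, and I would prove the case $j\geq i$ in detail, then obtain the case $j\leq i$ by duality using Propositions \ref{pro:hdual} and \ref{pro:dual}, together with the identity $(\rlift iAP)^*=\rtrun iA{(P^*)}$ noted just before the proposition.

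For $j\geq i$, set $N'=\rlift{j-i}{B-A}N$. The first step is to verify that $(M,N')$ is matched: applying Proposition \ref{pro:hminor} to the defining Higgs lift for $N'$, together with the observation that $(N\del(B-A)\oplus I(B-A))|(A\cap B)= N|(A\cap B)$, gives $N'|(A\cap B)= N|(A\cap B)= M.(A\cap B)$. Since $r_M(A-B)=i$, the free splice $M\fsp N'$ equals $\higgs i{\ist M}{\loo{N'}}$. A short computation with the rank formula for the Higgs lift yields
$$r_{N'}(Y) \= \min\{r_N(Y\cap A\cap B)+|Y-A|,\;r_N(Y)+(j-i)\}$$
for $Y\subseteq B$, and substituting this into the rank formula for the free splice expresses $r_{M\fsp N'}(X)$ as the minimum of three quantities: $r_M(X\cap A)+|X-A|$, $r_N(X\cap A\cap B)+|X-A|+i$, and $r_N(X\cap B)+j$.

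The main step—really the only nontrivial one—is to show that the middle quantity is redundant. Using the matching condition $M.(A\cap B)=N|(A\cap B)$ and the standard rank formula for a contraction,
$$r_N(X\cap A\cap B)+i \= r_{M.(A\cap B)}(X\cap A\cap B)+r_M(A-B) \= r_M((X\cap A)\cup(A-B)),$$
which is at least $r_M(X\cap A)$ by monotonicity. Hence the middle quantity is always at least the first, so the three-term minimum collapses to the two-term minimum giving $r_L(X)$, completing the case $j\geq i$. For the case $j\leq i$, I would dualize: Proposition \ref{pro:hdual} identifies $L^*$ with a Higgs lift of $\loo{(M^*)}$ towards $\ist{(N^*)}$, and Proposition \ref{pro:dual} together with $(\rtrun{i-j}{A-B}M)^*=\rlift{i-j}{A-B}(M^*)$ rewrites $((\rtrun{i-j}{A-B}M)\fsp N)^*$ as $N^*\fsp\rlift{i-j}{A-B}(M^*)$. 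A direct parameter check, using $r(\ist M)-r(\loo N)=i+r_{N^*}(B-A)$, then shows that the case $j\leq i$ for $(M,N)$ corresponds to the case already handled, applied to the matched pair $(N^*,M^*)$.
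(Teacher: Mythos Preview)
Your argument is correct and takes a genuinely different route from the paper's.  The paper proceeds structurally: it observes that $\cz(\ist M)\subseteq[\emptyset,A]$ and $\cz(\loo N)\subseteq[A-B,E]$, so by Theorem~\ref{thm:hcrypt}(7) the cyclic flats of $\higgs j{\ist M}{\loo N}$ lie in $[\emptyset,A]\cup[A-B,E]$ for every $j$; Theorem~\ref{thm:freesep} then gives that $(A,B)$ is a free separator, and Proposition~\ref{pro:hminor} identifies the restriction to $A$ and contraction to $B$ as the stated principal truncation and lift, uniformly in $j$.  Your approach instead computes rank functions directly, reducing the three-term minimum to two terms via the matching identity $r_N(X\cap A\cap B)+i=r_M\bigl((X\cap A)\cup(A-B)\bigr)\geq r_M(X\cap A)$, and then invokes duality for $j\leq i$.

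What each buys: the paper's argument is uniform in $j$ and showcases the cyclic-flat/free-separator machinery developed in Section~\ref{sec:factor}; your argument is more elementary and self-contained, needing neither Theorem~\ref{thm:freesep} nor the cyclic-flat description of Higgs lifts, only Proposition~\ref{pro:hminor} (for matching) and the duality results.  One small imprecision: your displayed formula $r_L(X)=\min\{r_M(X\cap A)+|X-A|,\,r_N(X\cap B)+j\}$ is stated as if for all $j$, but is only the definition for $j\geq 0$; since you only use it in the case $j\geq i\geq 0$ and handle $j\leq i$ entirely by duality, this does not affect the argument.
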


\begin{proof}
  Since $\cz (\ist M)\subseteq [\emptyset, A]$ and $\cz (\loo
  N)\subseteq [A-B,E]$, it follows from Theorem \ref{thm:hcrypt}
  that $\cz (\higgs j{\ist M}{\loo N})\subseteq [\emptyset, A]\cup
  [A-B,E]$, and hence by Theorem \ref{thm:freesep} that $\higgs j{\ist
    M}{\loo N} = \higgs j{\ist M}{\loo N} |A \fsp \higgs j{\ist
    M}{\loo N}.B$.  By Proposition \ref{pro:hminor}, we have $\higgs
  j{\ist M}{\loo N}|A = \higgs j{\ist M|A}{\loo N|A}$ and $\higgs
  j{\ist M}{\loo N}.B = \higgs{j-k}{\ist M.B}{\loo N.B}$, where
  $k=\rk{\ist M}{A-B}-\rk{\loo N}{A-B}=\rkm{A-B}=i$. It is clear that
  $\ist M|A=M$ and $\loo N.B = N$ and, since $M$ and $N$ are matched,
  we have $\ist M.B=N|(A\cap B)\oplus I(B-A)$ and $\loo N|A= M.(A\cap
  B)\oplus I^*(A-B)$; hence the result follows.
\end{proof}

\section{Interaction between free splice and other constructions}
\label{sec:inter}

Among the results proven in this section are that direct sums and
generalized parallel connections of irreducible matroids are
irreducible.  This section also shows how to express minors of free
splices as free splices.  Also, while the free product is a special
case of the free splice,
  we show how to obtain the free splice as an intersection of certain
  free products.

\subsection{Direct sum}

\begin{pro}\label{pro:sumform}
  If $M(A)$, $N(B)$, $P(C)$ are matroids with $(M,N)$ matched and
  $(A\cup B)\cap C= \emptyset$, then $(M\fsp N)\oplus P = (M\oplus
  P)\fsp (N\oplus P)$.
\end{pro}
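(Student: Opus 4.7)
The plan is to rewrite both sides as Higgs lifts and reduce the claim to a routine lemma saying that the Higgs lift commutes with direct summing on a disjoint ground set.

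First I would verify that $(M\oplus P, N\oplus P)$ is matched, so the right-hand side is defined: the two ground sets intersect in $(A\cap B)\cup C$, and since $C$ lies in a direct summand, $(M\oplus P).((A\cap B)\cup C) = M.(A\cap B)\oplus P$ and $(N\oplus P)|((A\cap B)\cup C) = N|(A\cap B)\oplus P$, which agree by hypothesis. Next, since $C\subseteq A\cup C$, we have $(B\cup C)-(A\cup C)=B-A$ and $(A\cup C)-(B\cup C)=A-B$, so
$$(M\oplus P)_1 \= \ist M\oplus P, \qquad (N\oplus P)_0 \= \loo N\oplus P,$$
and the defining parameter for both sides is $i=\rk{M\oplus P}{A-B}=\rkm{A-B}$. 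Thus
$$(M\oplus P)\fsp(N\oplus P)\=\higgs{i}{\ist M\oplus P}{\loo N\oplus P}, \qquad (M\fsp N)\oplus P \=\higgs{i}{\ist M}{\loo N}\oplus P,$$
and the proposition reduces to showing these two matroids agree.

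The crux is a short lemma: if $R\les Q$ on a ground set $E$ disjoint from the ground set $E'$ of $P$, then $\higgs iQR\oplus P=\higgs i{Q\oplus P}{R\oplus P}$. The hypothesis $R\les Q$ passes to direct sums because $\cf(R\oplus P)=\{F\cup G\co F\in\cf(R),\,G\in\cf(P)\}$ and similarly for $Q\oplus P$, so the right-hand side is defined. For any $X\subseteq E$ and $Y\subseteq E'$, a one-line rank computation gives
$$\rk{\higgs iQR\oplus P}{X\cup Y}\=\min\{\rkq X,\rkr X+i\}+\rkp Y\=\min\{\rk{Q\oplus P}{X\cup Y},\rk{R\oplus P}{X\cup Y}+i\},$$
which equals $\rk{\higgs i{Q\oplus P}{R\oplus P}}{X\cup Y}$. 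Applying this lemma with $Q=\ist M$ and $R=\loo N$ finishes the proof.

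I do not expect any real obstacle: the only care needed is the bookkeeping that the parameter $i$ and the isthmus/loop augmentations on the right side match those on the left, which works cleanly because $P$ is disjoint from $A\cup B$.
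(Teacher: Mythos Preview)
Your argument is correct, but it takes a different route from the paper's. You unwind the definition of the free splice as a Higgs lift, check that the isthmus/loop augmentations and the parameter $i$ match on both sides, and then prove the general lemma that the Higgs lift commutes with direct sum on a disjoint ground set via a one-line rank computation. The paper instead works at the level of cyclic flats: it sets $L=(M\fsp N)\oplus P$, notes that $\cz(L)=\{X\cup Y:X\in\cz(M\fsp N),\,Y\in\cz(P)\}$, combines this with $\cz(M\fsp N)\subseteq[\emptyset,A]\cup[A-B,A\cup B]$ to get $\cz(L)\subseteq[\emptyset,A\cup C]\cup[A-B,E]$, and then invokes Theorem~\ref{thm:freesep} to conclude that $(A\cup C,B\cup C)$ is a free separator of $L$, whence $L=L|(A\cup C)\fsp L.(B\cup C)=(M\oplus P)\fsp(N\oplus P)$. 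Your approach is more elementary and self-contained, and it isolates a reusable fact about Higgs lifts; the paper's approach showcases the cyclic-flat machinery of Section~\ref{sec:factor} and avoids any rank bookkeeping.
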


\begin{proof}
  Let $L=(M\fsp N)\oplus P$ and $E=A\cup B\cup C$.  Since $\cz (L)$ is
  given by $\{X\cup Y\co\text{$X\in\cz (M\fsp N)$ and $Y\in\cz
    (P)$\}}$ and $\cz (M\fsp N)\subseteq [\emptyset, A]\cup [A-B,
  A\cup B]$, we have $\cz (L)\subseteq [\emptyset , A\cup C]\cup [A-B,
  E]=[\emptyset, A\cup C] \cup [(A\cup C)-(B\cup C), E]$, and hence
  $(A\cup C, B\cup C)\in\fs L$.  Therefore $L=L|(A\cup C)\fsp L.(B\cup
  C)$, that is, $(M\fsp N)\oplus P = (M\oplus P)\fsp (N\oplus P)$.
\end{proof}

\begin{pro}\label{pro:sum}
  If $L=M(A)\oplus N(B)$ and $|A|,|B|\geq 2$, then $L$ is irreducible
  if and only if $M$ and $N$ are irreducible.
\end{pro}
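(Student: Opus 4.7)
The plan hinges on the description of the cyclic flats of a direct sum: since flats of $M\oplus N$ split as $(F\cap A)\cup(F\cap B)$ with each part a flat, and since circuits of $M\oplus N$ are exactly the circuits of $M$ together with those of $N$, we have
$$
\cz(L)\=\{X\cup Y\co X\in\cz(M),\ Y\in\cz(N)\}.
$$
With this in hand, the strategy is to apply Corollary~\ref{cor:bfsep} in both directions, reducing irreducibility of $L$, $M$, and $N$ to the existence of cyclic flats separating ordered pairs of distinct elements.

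For the forward direction, I will argue by contrapositive: suppose $M$ is reducible. Because $|A|\geq 2$, Corollary~\ref{cor:bfsep} supplies distinct $x,y\in A$ such that every $X\in\cz(M)$ with $x\in X$ also satisfies $y\in X$. Any $W\in\cz(L)$ containing $x$ decomposes as $X\cup Y$ with $X\in\cz(M)$ and $Y\in\cz(N)$; since $A\cap B=\emptyset$ and $x\in A$, we must have $x\in X$, hence $y\in X\subseteq W$. Thus the same pair $x,y$ witnesses that $L$ is reducible, a contradiction. The case of $N$ reducible is symmetric.

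For the backward direction, assume $M$ and $N$ are irreducible with $|A|,|B|\geq 2$. The corollary immediately following Corollary~\ref{cor:bfsep} then guarantees that neither $M$ nor $N$ has any loops or isthmuses, so $\emptyset$ and $A$ lie in $\cz(M)$ and $\emptyset,B$ lie in $\cz(N)$. Given distinct $x,y\in A\cup B$, I split into cases: if $x,y\in A$, use irreducibility of $M$ to find $X\in\cz(M)$ separating them and take $Z=X\cup\emptyset\in\cz(L)$; the case $x,y\in B$ is symmetric. If $x\in A$ and $y\in B$, take $Z=A\cup\emptyset=A\in\cz(L)$, which contains $x$ but not $y$; the remaining case uses $Z=B$. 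Corollary~\ref{cor:bfsep} then gives irreducibility of $L$.

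The only real subtlety is the backward direction's reliance on $\emptyset\in\cz(M)\cap\cz(N)$ and $A\in\cz(M)$, $B\in\cz(N)$ to supply the ``padding'' cyclic flats. This is where the hypotheses $|A|,|B|\geq 2$ together with irreducibility of $M$ and $N$ do their work, by forcing the absence of loops and isthmuses. Everything else is routine case-analysis on where $x$ and $y$ sit relative to the partition $A\sqcup B$.
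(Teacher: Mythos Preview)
Your proof is correct and follows essentially the same approach as the paper, centered on Corollary~\ref{cor:bfsep} and the description $\cz(L)=\{X\cup Y\co X\in\cz(M),\ Y\in\cz(N)\}$. The direction ``$M,N$ irreducible $\Rightarrow$ $L$ irreducible'' is handled identically.

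The one minor difference is in the converse. You argue directly via the cyclic-flat description: a pair $x,y\in A$ witnessing reducibility of $M$ also witnesses reducibility of $L$, since any $W\in\cz(L)$ containing $x$ has $W\cap A\in\cz(M)$ containing $x$, hence $y$. The paper instead invokes Proposition~\ref{pro:sumform}: if $M=M'\fsp M''$ nontrivially, then $L=(M'\fsp M'')\oplus N=(M'\oplus N)\fsp(M''\oplus N)$, exhibiting a nontrivial free separator of $L$. Your route is slightly more self-contained (it needs only Corollary~\ref{cor:bfsep} and the standard fact about $\cz$ of a direct sum), while the paper's route reuses the distributivity formula just established. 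Neither has a real advantage over the other.
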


\begin{proof}
  Suppose $M$ and $N$ are irreducible.  Let $x$ and $y$ be distinct
  elements of $A\cup B$. If
  $x,y\in A$, then some $Z\in\cz (M)\subseteq\cz (L)$ contains $x$ and
  not $y$; a similar argument applies if $x,y\in B$.  Suppose $x\in A$
  and $y\in B$.  Since $M$ is irreducible and $|A|\geq 2$, it follows
  that $M$ has no isthmuses, so $A\in\cz (M)$.  Hence $A$ is a cyclic
  flat of $L$ that contains $x$ and not $y$.  A similar argument
  applies if $x\in B$ and $y\in A$, so $L$ is irreducible by Corollary
  \ref{cor:bfsep}.  The converse is immediate from Proposition
  \ref{pro:sumform}.
\end{proof}

\subsection{Generalized parallel connection}

We start by recalling the generalized parallel connection of matroids
$M(A)$ and $N(B)$. 
(For more information on this operation,
see~\cite{br:mcc,ox:mt}.)  
 Set $T=A\cap B$ and
$K=M|T$.  If $N|T=K$, if $\clm T$ is a modular flat of $M$,
and if each element of $\clm T-T$ is either a loop or parallel to
an element of $T$, then there is a freest amalgam of $M$ and $N$.
This freest amalgam, denoted $P_K(M,N)$, is the \emph{generalized
  parallel connection} of $M$ and $N$; its flats are the subsets
$F$ of $A\cup B$ for which $F\cap A$ is a flat of $M$ and
$F\cap B$ is a flat of $N$.  Thus, in particular, $P_K(M,N)=M\oplus N$ if
$T=\emptyset$.   Note that the closure in $P_K(M,N)$ of $X\subseteq A$ is
  $\clm{X}\cup \cln{\clm{X} \cap T}$; we will denote this closure by
  $\cl{}X$.  A similar formula for $\cl{}X$ holds if $X\subseteq B$.
\begin{pro}\label{pro:gpc}
  Let $M$ and $N$ be as above, each with at least two elements.
  If $M$ and $N$ are both irreducible, then so is $P_K(M,N)$.
\end{pro}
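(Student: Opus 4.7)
The plan is to apply Corollary~\ref{cor:bfsep}: it suffices to show that, for every ordered pair $x,y$ of distinct elements of $A\cup B$, some cyclic flat of $P_K(M,N)$ contains $x$ but not $y$.  As a preliminary observation, $M$ irreducible with $|A|\ge 2$ implies, by the earlier corollaries, that $M$ has no loops and no clones; combined with the hypothesis that each element of $\clm T-T$ is a loop or parallel to an element of $T$, this forces $\clm T-T=\emptyset$, so $T$ is a flat of $M$.  By the symmetric argument, I would show $T$ is a flat of $N$, i.e., $\cln T=T$.

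The key construction is that for any $Z_M\in\cz(M)$, the set $\Phi(Z_M):=Z_M\cup\cln{Z_M\cap T}$ is a cyclic flat of $P_K(M,N)$.  Since $Z_M\cap T$ is a flat of $K$, we have $\cln{Z_M\cap T}\cap T=Z_M\cap T$, so $\Phi(Z_M)\cap A=Z_M\in\cf(M)$ and $\Phi(Z_M)\cap B=\cln{Z_M\cap T}\in\cf(N)$, making $\Phi(Z_M)$ a flat of $P_K(M,N)$; cyclicity follows because every element of $Z_M$ lies in a circuit of $M$ inside $Z_M$ (by cyclicity of $Z_M$), and every element of $\cln{Z_M\cap T}-(Z_M\cap T)$ lies in an $N$-circuit inside $\cln{Z_M\cap T}$ (by definition of the $N$-closure).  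Dually, $\Phi'(Z_N):=Z_N\cup\clm{Z_N\cap T}$ is a cyclic flat of $P_K(M,N)$ for each $Z_N\in\cz(N)$.

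The case analysis is then immediate.  If $x,y\in A$, the irreducibility of $M$ (via Corollary~\ref{cor:bfsep}) yields $Z_M\in\cz(M)$ with $x\in Z_M$ and $y\notin Z_M$; then $\Phi(Z_M)$ contains $x$ but not $y$, since $\Phi(Z_M)\cap A=Z_M$.  The case $x,y\in B$ is symmetric via $\Phi'$ and $N$-irreducibility.  If $x\in A$ and $y\in B-A=B-T$, then since $M$ has no loops some $Z_M\in\cz(M)$ contains $x$; the flat $\Phi(Z_M)$ contains $x$, and $\Phi(Z_M)\cap B=\cln{Z_M\cap T}\subseteq\cln T=T$, so $y\notin\Phi(Z_M)$.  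The case $x\in B-A$, $y\in A$ is dual.

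The main obstacle is justifying the symmetric conclusion $\cln T=T$, since the GPC hypothesis explicitly constrains only $\clm T-T$ in $M$.  I expect this to follow from $N$-irreducibility together with the structure of $P_K(M,N)$: an element $z\in\cln T-T$ that is neither a loop nor parallel to an element of $T$ in $N$ would, via the cyclic-flat correspondences given by $\Phi$ and $\Phi'$, force clone-like behavior between $z$ and some element of $T$ that would pull back to a cloning or loop in $N$, contradicting $N$-irreducibility.  If a direct derivation proves awkward, I would argue by contradiction via Theorem~\ref{thm:freesep}: a nontrivial free separator $(A',B')$ of $P_K(M,N)$ would induce, through the families $\{\Phi(Z_M)\}_{Z_M\in\cz(M)}$ and $\{\Phi'(Z_N)\}_{Z_N\in\cz(N)}$, the pairs $(A'\cap A, B'\cap A)$ and $(A'\cap B, B'\cap B)$ as free separators of $M$ and $N$ respectively, forcing both to be trivial; combining the resulting constraints on $A'\cap A$ and $A'\cap B$ with irreducibility would then yield the needed contradiction, possibly completed by a combined cyclic flat $F_M\cup F_N$ construction leveraging the modularity of $T$ in $M$.
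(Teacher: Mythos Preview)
Your constructions $\Phi(Z_M)=Z_M\cup\cln{Z_M\cap T}$ and $\Phi'(Z_N)=Z_N\cup\clm{Z_N\cap T}$ do give cyclic flats of $P_K(M,N)$ (in fact $\Phi'(Z_N)=Z_N$, since $\clm T=T$ makes $Z_N\cap T$ already $M$-closed), and with them you correctly dispose of the cases $x,y\in A$; $x,y\in B$; and $x\in B-T$, $y\in A-T$.  The gap is exactly the one you identify: the claim $\cln T=T$.  There is no ``symmetric argument'' available, because the generalized-parallel-connection hypotheses are asymmetric---they constrain $\clm T$ but say nothing whatsoever about $\cln T$---and irreducibility of $N$ only rules out loops, isthmuses, and clones.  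An element of $\cln T-T$ need not be a clone of anything in $N$, so your first fallback does not work; and your second fallback via Theorem~\ref{thm:freesep} cannot close the gap as sketched, since $\Phi$ and $\Phi'$ do not produce all cyclic flats of $P_K(M,N)$, and constraints on those families alone do not control an arbitrary free separator.

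The paper handles the remaining case $x\in A-T$, $y\in B-T$ by what is really a refinement of your idea: rather than taking an arbitrary $Z_M\in\cz(M)$ containing $x$ and hoping that $\cln{Z_M\cap T}\subseteq T$, it uses the modularity of $T$ in $M$---the one hypothesis you never invoke---to produce a circuit $C'$ of $M$ with $x\in C'$ and $\rkm{\clm{C'}\cap T}\le 1$.  Since $M$ is simple this forces $|\clm{C'}\cap T|\le 1$; then simplicity of $N$ gives $\cln{\clm{C'}\cap T}=\clm{C'}\cap T\subseteq T$, so the cyclic flat $\Phi(\clm{C'})$ contains $x$ and avoids $y$ with no assumption on $\cln T$.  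Constructing such a $C'$ from an arbitrary circuit through $x$ is where the modularity of $T$ does essential work.

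A small slip: ``since $M$ has no loops some $Z_M\in\cz(M)$ contains $x$'' should read ``since $x$ is not an isthmus of $M$''; this also follows from irreducibility with $|A|\ge 2$, but absence of loops alone does not put $x$ in a cyclic flat.
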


\begin{proof}
Since $|A|,|B|\geq 2$, the matroids $M$ and $N$, being irreducible,
  have no loops, no isthmuses, and no parallel elements.
In particular, $\clm{T}=T$.  It suffices to show that
  if $x$ and $y$ are distinct elements of $A\cup B$, then some
  cyclic flat of $P_K(M,N)$ contains $x$ and not $y$.  This
  property follows easily if $x$ and $y$ are both in $A$ or both in
  $B$ since $M$ and $N$ are restrictions of $P_K(M,N)$.

  Assume $x\in B-T$ and $y\in A-T$.  Since $N$ has no isthmuses, $x$ is
in some circuit $C$ of $N$.  No element of $A-T$ is in $\cl{}C =
\cln{C}\cup \clm{\cln{C} \cap T}$, so $\cl{}C$ is the required cyclic flat.

  Now assume $x\in A-T$ and $y\in B-T$.  Since $M$ has no
  isthmuses, $x$ is in a circuit of $M$.  If $C\in \mathcal{C}(M)$
  with $x\in C$ and $\rkm{\clm{C}\cap T}\leq 1$, then the
  description of $\cl{}C$ 
  preceding this proposition and the fact that $N$ has neither
  loops nor parallel elements give $y\not\in \cl{}C$, so $\cl{}C$ is
  the required cyclic flat.  The following step will therefore
  complete the proof: given $C\in \mathcal{C}(M)$ with $x\in C$ and
  $\rkm{\clm{C}\cap T}\geq 2$, we construct $C'\in
  \mathcal{C}(M)$ with $x\in C'$ and $\rkm{\clm{C'}\cap T}=1$.

  Let $C$ be as just stated.  Set $k=\rkm{C\cup T}-\rkm{T}$ and
  $F=\clm{C}$.  The fact that $T$ is a modular flat of $M$ gives $\rkm{F}-
  \rkm{F\cap T}=k$.  Let $Y\subseteq C-(T\cup x)$ be a basis of $F-T$
  in $M/(F\cap T)$.  Then $\rkm{\clm{Y}}-
  \rkm{\clm{Y}\cap T}=k$ since $|Y|=k$ and $\clm{Y}\cap
  T=\emptyset$.  Modularity then gives $\rkm{Y\cup T} -\rkm{T}=k$, from
  which $\clm{Y\cup T} = \clm{C\cup T}$ follows.  Now $\clm{Y\cup
  x}$ is a rank-$(k+1)$ flat of $M$ with $\clm{Y\cup x\cup T}$ also
  being $\clm{C\cup T}$, so modularity implies that $\clm{Y\cup
  x}\cap T$ is a point, say $p$.  The fact that $p$ is not in $\clm{Y}$
  implies that $x$ is in the fundamental circuit $C'$ of $p$ with
  respect to the basis $Y\cup x$ of $\clm{Y\cup x}$; thus, as needed,
  $C'\in \mathcal{C}(M)$, $x\in C'$, and $\rkm{\clm{C'}\cap T}=1$.
\end{proof}

The property in Proposition~\ref{pro:gpc} does not hold for amalgams
in general.  For instance, let $M$ be the irreducible matroid
$M(K_4)$, the cycle matroid of the complete graph $K_4$.  Form $N$
from $M$ by relabelling one element $a$ as $a'$.  The only amalgam
of $M$ and $N$ has the elements $a$ and $a'$ parallel, so the
amalgam is reducible.

In contrast to Proposition~\ref{pro:sum}, the converse of
Proposition~\ref{pro:gpc} can fail, even in the special case of a
parallel connection at a point.  For example, starting with the
matroid $U_{2,3}$ on $\{a,b,p\}$, form $M$ by first taking the
parallel connection, at $a$, of this matroid with a copy of $M(K_4)$,
and then take the parallel connection, at $b$, of the result with a
second copy of $M(K_4)$.  The resulting rank-$6$ matroid $M$ on the
set $A$ is reducible since every cyclic flat that contains $p$ also
contains $a$.  Relabel the elements in $A-p$ to get a matroid $N$
on $B$ isomorphic to $M$ and with $A\cap B=p$.  It is easy to
see that the parallel connection of the reducible matroids $M$ and
$N$ at $p$ is irreducible.

\subsection{Minors}

We now show that minors of free splices are also free splices.
From this point on, we adhere to the convention that all unary
operations on matroids are performed before binary operations;
so, for example, $M\fsp N|X$ means $M\fsp (N|X)$ rather than 
$(M\fsp N)|X$.  

\begin{thm}\label{thm:minor}
If $M(A)$ and $N(B)$ are matched, $i=\rkm{A-B}$, and 
$X\subseteq A\cup B$, then
  $$
  (M\fsp N)|X \= M|(X\cap A)\fssp N' \sands (M\fsp N).X \= M'\fssp 
N.(X\cap B),
  $$
  where
  $$
  N'\= \higgs j{\ist M|X.(X\cap B)}{N|(X\cap B)} \sands M'\= \higgs
  k{M.(X\cap A)}{\loo N.X|(X\cap A)},
  $$
  with $j=i-\rkm{X-B}$ and $k=i -\rkm{A-X}+\rkn{B-X}
  -|B-(A\cup X)|$.
\end{thm}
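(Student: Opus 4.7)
The plan is to handle both identities with a single strategy that combines Proposition \ref{pro:hminor} (for minors of Higgs lifts) with Theorem \ref{thm:freesep} (factorization via cyclic flats). Starting from the definition $M\fsp N = \higgs i{\ist M}{\loo N}$, I first write the desired minor as a Higgs lift, then exhibit $(X\cap A, X\cap B)$ as a free separator of that Higgs lift, and finally identify the two factors via a second application of Proposition \ref{pro:hminor} (or of duality).

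For the restriction identity, Proposition \ref{pro:hminor} gives $(M\fsp N)|X = \higgs i{\ist M|X}{\loo N|X}$. The direct-sum decompositions
\[
\ist M|X \= M|(X\cap A)\,\oplus\, I(X\cap B - A), \qquad \loo N|X \= N|(X\cap B)\,\oplus\, I^*(X\cap A - B)
\]
imply $\cz(\ist M|X)\subseteq [\emptyset,\, X\cap A]$ (since the only cyclic flat of $I$ is $\emptyset$) and $\cz(\loo N|X)\subseteq [X\cap A - B,\, X]$ (since the unique cyclic flat of $I^*$ is its whole ground set). By Theorem \ref{thm:hcrypt}(7), every cyclic flat of the Higgs lift already lies in $\cz(\ist M|X)\cup\cz(\loo N|X)$, so Theorem \ref{thm:freesep} yields that $(X\cap A, X\cap B)$ is a free separator of $(M\fsp N)|X$, giving the factorization as a free splice. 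The left factor is visibly $M|(X\cap A)$. For the right factor I apply Proposition \ref{pro:hminor} once more to contract $X - B = X\cap A - B$: the index shifts by $\rk{\ist M|X}{X-B} -\rk{\loo N|X}{X-B} = \rkm{X-B}-0$, the second argument becomes $\loo N|X/(X-B) = N|(X\cap B)$ because the contracted elements are loops in $\loo N|X$, and the first argument is exactly $\ist M|X.(X\cap B)$. This produces $N'$ as defined, with index $j = i -\rkm{X-B}$.

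For the contraction identity, the cleanest route is duality: $(M\fsp N).X \= ((M\fsp N)^*|X)^* \= ((N^*\fsp M^*)|X)^*$ by Proposition \ref{pro:dual}. Applying the restriction identity just established to the matched pair $(N^*, M^*)$ expresses $(N^*\fsp M^*)|X$ as $N^*|(X\cap B)\fssp N''$ for a Higgs lift $N''$, and Proposition \ref{pro:dual} again rewrites the dual as $(N'')^*\fssp N.(X\cap B)$. Proposition \ref{pro:hdual} then identifies $(N'')^*$ with a Higgs lift of $\loo N.X|(X\cap A)$ towards $M.(X\cap A)$, using $(M^*|(X\cap A))^* = M.(X\cap A)$ and $(\ist{(N^*)}|X.(X\cap A))^* = \loo N.X|(X\cap A)$. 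The main obstacle I anticipate is purely arithmetic: tracking the Higgs-lift index through Proposition \ref{pro:hminor} and the two applications of Proposition \ref{pro:hdual} to obtain precisely $k \= i -\rkm{A-X} +\rkn{B-X} -|B-(A\cup X)|$. As a parallel alternative that avoids the duality bookkeeping, one can mirror the restriction argument verbatim: apply Proposition \ref{pro:hminor} to the contraction directly to get $(M\fsp N).X = \higgs{k}{\ist M.X}{\loo N.X}$ (where the index shift is computed from $\rk{\ist M}{(A\cup B)-X}$ and $\rk{\loo N}{(A\cup B)-X}$, yielding exactly $k$), observe the analogous direct-sum decompositions of $\ist M.X$ and $\loo N.X$ so that the cyclic-flat condition of Theorem \ref{thm:freesep} holds, and then read off $M'$ as the contraction factor via a final use of Proposition \ref{pro:hminor}.
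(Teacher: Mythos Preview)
Your proposal is correct and follows essentially the same route as the paper: write $(M\fsp N)|X$ as a Higgs lift via Proposition~\ref{pro:hminor}, show $(X\cap A, X\cap B)$ is a free separator, identify the factors with a second application of Proposition~\ref{pro:hminor}, and obtain the contraction formula by duality (or by a parallel direct computation). The only minor difference is in how the free separator is established: the paper observes directly that every cyclic flat of $L|X$ has the form $Z\cap X$ for some $Z\in\cz(L)$ and inherits the containment from $(A,B)\in\fs L$, whereas you use the direct-sum decompositions of $\ist M|X$ and $\loo N|X$ together with Theorem~\ref{thm:hcrypt}(7) (this is the argument style of Proposition~\ref{pro:hsplice}); both are short and valid.
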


\begin{proof}
Let $L=M\fsp N$.
  Since $(A,B)$ is a free separator of $L$ and any cyclic flat of
  $L|X$ is of the form $Z\cap X$ for some $Z\in\cz (L)$, it is
  immediate from Theorem \ref{thm:freesep} that $(X\cap A, X\cap B)$
  is a free separator of $L|X$, so
  \begin{align*}
  L|X \= & \, L|X|(X\cap A)\fssp L|X.(X\cap B) \\
   \= & \, M|(X\cap  A)\fssp L|X.(X\cap B).
  \end{align*}
  Proposition \ref{pro:hminor} gives $L|X=\higgs i{\ist M}{\loo
    N}|X=\higgs i{\ist M|X}{\loo N|X}$.  Using
  Proposition \ref{pro:hminor} again, and recalling that the elements
  of $A-B$ are loops of $N_0$, gives
  \begin{align*}
    L|X.(X\cap B) \= & \, \higgs{i-\ell}{\ist M|X.(X\cap B)}{N_0|X.(X\cap
      B)} \\
    \= & \, \higgs{i-\ell}{\ist M|X.(X\cap B)}{N|(X\cap B)},
  \end{align*}
  where $\ell = \rk{\ist M|X}{X-(X\cap B)}-\rk{\loo N|X}{X-(X\cap
    B)}=\rkm{X-B}$.  Thus, $L|X.(X\cap B) = N'$\!, and so $L|X = M|(X\cap
  A)\fsp N'$. The formula for $L.X$ follows either by duality or by a
  similar application of Proposition \ref{pro:hminor}.
\end{proof}

We next characterize the special cases in which restriction and
contraction of free splices can be expressed in the simplest way.
Mildly extending the definition often seen for flats, we call a pair
  $(X,Y)$ of subsets of $E$ a \emph{modular pair} in the matroid
  $M(E)$ if $\rkm{X} + \rkm{Y} = \rkm{X\cup Y} + \rkm{X\cap Y}$, that
  is, if equality holds in the semimodular inequality.

\begin{cor}\label{cor:minor1}
  Assume $M(A)$ and $N(B)$ are matched and $X\subseteq A\cup B$. We
  have $(M\fsp N)|X=M|(X\cap A)\fssp N|(X\cap B)$ if and only if
  either
  \begin{itemize}
  \item[(a)] $\rkm{A-B}=\rkm{X-B}$, 
that is, $A-B\subseteq \clm{X-B}$, or
  \item[(b)] all elements of $X-A$ are isthmuses of $N|(X\cap B)$ and
    $(A-B,X\cap A)$ is a modular pair in $M$.
  \end{itemize}
  Likewise, $(M\fsp N).X=M.(X\cap A)\fssp N.(X\cap B)$ if and only if
  either
  \begin{itemize}
  \item[(a$'$)] all elements of $(B-A)-X$ are isthmuses of
    $N|(B-(X-A))$, or
  \item[(b$'$)] all elements of $X-B$ are loops of $M.(X\cap A)$ and
    $(A\cap B,B-X)$ is a modular pair in $N$.
  \end{itemize}
\end{cor}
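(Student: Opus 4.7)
The plan is to prove the restriction formula; the contraction formula then follows by duality. By Theorem~\ref{thm:minor}, $(M\fsp N)|X=M|(X\cap A)\fsp N'$, where $N'=\higgs j{\ist M|X.(X\cap B)}{N|(X\cap B)}$ and $j=\rkm{A-B}-\rkm{X-B}$. A splice has the given matroids as its first restriction and second contraction by definition, so two splices of the same matched pair coincide; hence $(M\fsp N)|X=M|(X\cap A)\fsp N|(X\cap B)$ if and only if $N'=N|(X\cap B)$. Moreover, $X\subseteq A\cup B$ forces $X-B\subseteq A-B$, so $j\geq 0$.

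Set $P=\ist M|X.(X\cap B)$ and $Q=N|(X\cap B)$, so $N'=\higgs jPQ$ and $Q\les P$. From the rank formula $\min\{\rk PY,\rk QY+j\}$ for the Higgs lift, with $j\geq 0$ and $Q\leq P$, one checks that $\higgs jPQ=Q$ iff either $j=0$ or $P=Q$. The case $j=0$ is exactly condition (a). For the case $P=Q$, a direct computation yields $P=[M|(X\cap A)/(X\cap(A-B))]\oplus I(X-A)$; thus the $I(X-A)$ summand forces each element of $X-A$ to be an isthmus of $Q$, giving the first clause of~(b). Granting this, $Q=N|(X\cap A\cap B)\oplus I(X-A)$, so $P=Q$ reduces to
$$
M|(X\cap A)/(X\cap(A-B))\=N|(X\cap A\cap B).
$$
The matching hypothesis rewrites the right side as $M/(A-B)|(X\cap A\cap B)$, and commuting restriction and contraction rewrites the left side as $M/(X\cap(A-B))|(X\cap A\cap B)$. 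The step I expect to require the most care is the last one: comparing rank functions of these two matroids on $X\cap A\cap B$, one verifies that the equality for every $Y\subseteq X\cap A\cap B$ reduces, by a submodularity argument inside $M/(X\cap(A-B))$, to tightness for the single value $Y=X\cap A\cap B$, and that single tightness rearranges to the modular pair condition $\rkm{A-B}+\rkm{X\cap A}=\rkm{(A-B)\cup(X\cap A)}+\rkm{X\cap(A-B)}$ of~(b).

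For the contraction formula, Proposition~\ref{pro:dual} gives $(M\fsp N).X=((N^*\fsp M^*)|X)^*$, so I apply the restriction formula, just proven, to $(N^*,M^*)$ and $X$. The two resulting conditions translate, via $(L|S)^*=L^*.S$ and the duality formula $\rk{L^*}Y=|Y|+\rkl{E-Y}-\r L$, to the conditions of the corollary: the rank equality $\rk{N^*}{B-A}=\rk{N^*}{X-A}$ becomes the isthmus condition on $(B-A)-X$ in $N|(B-(X-A))$ of~(a$'$); the isthmus condition on $X-B$ in $M^*|(X\cap A)$ becomes the loop condition in $M.(X\cap A)$ of~(b$'$); and the modular pair condition on $(B-A,X\cap B)$ in $N^*$, after translating each of the four ranks by the duality formula, rearranges to the modular pair condition on $(A\cap B,B-X)$ in $N$ of~(b$'$).
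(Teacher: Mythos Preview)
Your argument is correct and follows the paper's approach closely: reduce via Theorem~\ref{thm:minor} to the question of whether $N'=N|(X\cap B)$, split into the cases $j=0$ and $P=Q$, then translate $P=Q$ into the isthmus and modular-pair conditions of~(b), and finish by duality. One sentence needs rewording: ``two splices of the same matched pair coincide'' is false in general (see Example~\ref{exa:mins}); what you actually need, and what is true, is that $M'\fsp N'=M'\fsp N''$ forces $N'=N''$, since contracting either side to $X\cap B$ recovers the second factor. The only other difference from the paper is stylistic: where the paper recasts the minor equality as a deletion-versus-contraction statement and invokes standard separator results, you carry out the submodularity reduction directly to a single modular-pair identity; the two arguments are equivalent.
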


\begin{proof} 
Let $j=\rkm{A-B}-\rkm{X-B}$. By Theorem \ref{thm:minor},
the expression for $(M\fsp N)|X$ in this corollary is valid
if and only if $\higgs j{\ist M|X.(X\cap B)}{N|(X\cap B)}= N|(X\cap B)$.
Since $j\geq 0$, this is the case
if and only
either $j=0$, which is just statement (a), or $M_1|X.(X\cap B)= N|(X\cap B)$.
Since all elements of $B-A$ are isthmuses of
$M_1$, this last equation holds if and only if all elements of $X-A$ are
isthmuses of $N|(X\cap B)$ and
$$
     M_1|X.(X\cap B)|(X\cap A\cap B)\= N|(X\cap A\cap B).
$$  
Simplifying and using the matching condition $N|(A\cap B)=M.(A\cap B)$,
this equation can be rewritten as 
   $$M|(X\cap A).(X\cap
   A\cap B) \= M.(A\cap B)|(X\cap A\cap B),
   $$
which says that in $M\del ((A\cap B)-X)/(X-(A\cap B))$,
the same minor results whether $A-(B\cup X)$ is deleted or
contracted.  Using standard results on connectivity and separators
(see, e.g.,~\cite[Section 4.2]{ox:mt}), this statement can be
recast as saying that $(A-B, X\cap A)$ is a modular pair of sets in
$M$.  The assertion about contractions follows by duality, noting
that the containment $B-A\subseteq \cl{N^*}{X-A}$ holds if and only if
$(B-A)-X\subseteq \isthm{N|(B-(X-A))}$.
\end{proof}

\begin{cor}\label{cor:minor2}
Suppose that $M(A)$ and $N(B)$ are matched and that 
$X\subseteq A\cup B$.
\begin{enumerate}[\rm (1)\,]
\item
If $A-B\subseteq X$, then $(M\fsp N)|X = M|(X\cap A) \fssp N|(X\cap B)$.
\item
If $B-A\subseteq X$, then $(M\fsp N).X = M.(X\cap A) \fssp N.(X\cap B)$.
\end{enumerate}
\end{cor}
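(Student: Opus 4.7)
The plan is to derive both parts directly from Corollary \ref{cor:minor1}, which gives four alternative criteria (two for restriction, two for contraction) under which the minors of $M\fsp N$ take the simple form on the right-hand side of our desired equations. The goal is simply to verify that, under the hypotheses $A-B\subseteq X$ (resp.\ $B-A\subseteq X$), the relevant hypothesis (a) (resp.\ (a$'$)) of Corollary \ref{cor:minor1} holds automatically.

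For part (1), I would argue as follows. Since $X\subseteq A\cup B$, we always have $X-B\subseteq (A\cup B)-B = A-B$. The assumption $A-B\subseteq X$, combined with the disjointness of $A-B$ from $B$, yields the reverse inclusion $A-B\subseteq X-B$. Hence $A-B = X-B$, and in particular $\rkm{A-B}=\rkm{X-B}$, which is condition (a). Applying Corollary \ref{cor:minor1} then gives $(M\fsp N)|X = M|(X\cap A)\fssp N|(X\cap B)$.

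For part (2), I would verify condition (a$'$) instead. The hypothesis $B-A\subseteq X$ immediately forces $(B-A)-X = \emptyset$, so the statement that every element of $(B-A)-X$ is an isthmus of $N|(B-(X-A))$ holds vacuously. Corollary \ref{cor:minor1} then gives $(M\fsp N).X = M.(X\cap A)\fssp N.(X\cap B)$. Alternatively, one could obtain (2) from (1) by duality, using Proposition \ref{pro:dual}: apply part (1) to $N^*(B)$ and $M^*(A)$ with the subset $X$ (whose complement in $A\cup B$ is contained in $A$, so the dual hypothesis $B-A\subseteq X$ becomes the restriction hypothesis for the dual pair), and then dualize back.

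There is essentially no obstacle here: both parts are immediate checks of the simplest hypotheses in Corollary \ref{cor:minor1}, so the only care needed is bookkeeping with the set differences $X-B$ and $(B-A)-X$ to see that the respective conditions reduce to trivialities under the stated containments.
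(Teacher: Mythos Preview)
Your proof is correct and matches the paper's intended argument: Corollary~\ref{cor:minor2} is stated without proof precisely because it follows from Corollary~\ref{cor:minor1} by the immediate verifications you give, namely that $A-B\subseteq X$ forces $X-B=A-B$ (hence condition~(a)) and $B-A\subseteq X$ forces $(B-A)-X=\emptyset$ (hence condition~(a$'$) vacuously).
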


\subsection{Free product}

As mentioned in Section \ref{subsec:free}, whenever the ground sets of
$M(A)$ and $N(B)$ are disjoint, the free splice of $M$ and $N$ is the
free product $M\frp N$.  The various cryptomorphic descriptions of
free product given in \cite{crsc:uft} are obvious specializations of
their free splice counterparts, given in Theorem \ref{thm:crypt}.

It was shown in \cite{crsc:pem} that in the case of disjoint ground
sets, the set $\splice MN=\{L\co\text{$L|A=M$ and $L.B=N$}\}$ is
the interval $[M\oplus N, M\frp N]$ in the weak order on $\cm (A\cup B)$.
(Recall that, as we saw in Example \ref{exa:mins}, the set \splice MN\
is not in general an interval, since it may have many minimal
elements.)

Suppose $M(A)$ and $N(B)$ are given, where $A$ and $B$ need not be
disjoint, and denote by  $I_1$ and $I_2$ the intervals
$I_1 \= [M|(A-B) \oplus N,\, M|(A-B) \frp N]$ and $I_2\= [M\oplus
N.(B-A),\, M\frp N.(B-A)]$ in $\cm (A\cup B)$.  We then have
$$
I_1\= \{ L\co\text{$L|(A-B)=M|(A-B)$ and $L.B=N$}\}
$$
and
$$
I_2 \= \{ L\co\text{$L|A=M$ and $L.(B-A)=N.(B-A)$}\},
$$
and hence it follows that $\splice MN = I_1\cap I_2$.  Note that when
$A$ and $B$ are disjoint we have $I_1=I_2=[M\oplus N,\, M\frp N] =
\splice MN$.

Suppose that $L$, $P$, and $Q$ are matroids on the same set. The matroid $L$ is
the \emph{intersection} of $P$ and $Q$ if $\ci (L) = \ci (P)\cap\ci
(Q)$. 
When $P$ and $Q$ have the same rank, this is the case if and only if
$\cb (L)=\cb (P)\cap\cb (Q)$.  (For arbitrary matroids $P$ and $Q$ on
the same set, the intersection $\ci (P)\cap\ci (Q)$ is not the
collection of independent sets of a matroid.)   We now show that
a free splice is the intersection of two free products.

\begin{pro}
  If $M(A)$ and $N(B)$ are matched, then the free splice $M\fsp N$ is
  the intersection of the matroids $M|(A-B)\frp N$ and $M\frp
  N.(B-A)$.
\end{pro}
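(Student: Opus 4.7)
My plan is to verify the set equality $\ci(M\fsp N)=\ci(M|(A-B)\frp N)\cap \ci(M\frp N.(B-A))$ by writing down the independent sets of all three matroids using Theorem~\ref{thm:hcrypt}(1) (equivalently Theorem~\ref{thm:crypt}(1)). Since $A-B$ is disjoint from $B$ and $A$ is disjoint from $B-A$, both free products are special cases of the free splice, so the same cryptomorphic description applies in all three cases. Set $S=A-B$ and $T=A\cap B$; then the three descriptions become
\begin{align*}
\ci(M\fsp N)&\=\{X\co X\cap A\in\ci(M)\text{ and }|X|\leq\rkm S+\rkn{X\cap B}\},\\
\ci(M|S\frp N)&\=\{X\co X\cap S\in\ci(M)\text{ and }|X|\leq\rkm S+\rkn{X\cap B}\},\\
\ci(M\frp N.(B-A))&\=\{X\co X\cap A\in\ci(M)\text{ and }|X|\leq\r M+\rk{N.(B-A)}{X\cap(B-A)}\}.
\end{align*}

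For the backward containment, if $X$ lies in both free products then $X\cap A\in\ci(M)$ comes from the second, and the size bound $|X|\leq\rkm S+\rkn{X\cap B}$ comes from the first; hence $X\in\ci(M\fsp N)$. This direction is immediate.

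For the forward containment, suppose $X\in\ci(M\fsp N)$. Then $X\cap S\subseteq X\cap A\in\ci(M)$ together with the given size bound places $X$ in $\ci(M|S\frp N)$. The content of the proof is showing $X\in\ci(M\frp N.(B-A))$: we must verify $|X|\leq\r M+\rk{N.(B-A)}{X\cap(B-A)}$. Here I expand $\rk{N.(B-A)}{X\cap(B-A)}=\rkn{(X\cap(B-A))\cup T}-\rkn T$ and invoke the matching condition $N|T=M.T$, which gives $\rkn T=\r M-\rkm S$, so the right-hand bound simplifies to $\rkm S+\rkn{(X\cap(B-A))\cup T}$. Since $(X\cap(B-A))\cup T\supseteq X\cap B$, monotonicity of $\rank N$ yields $\rkn{(X\cap(B-A))\cup T}\geq\rkn{X\cap B}$, and the desired bound follows from the hypothesis.

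The only mildly non-routine step is recognizing that the bound appearing in the free product $M\frp N.(B-A)$ can be rewritten, via the matching identity and the formula for rank in a contraction, as a quantity that is \emph{weaker} than the free splice bound; everything else is direct application of the cryptomorphism in Theorem~\ref{thm:crypt}(1).
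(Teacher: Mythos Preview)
Your proof is correct and follows essentially the same approach as the paper: describe all three matroids via Theorem~\ref{thm:crypt} and then compare set-theoretically, using the containment $X\cap B\subseteq (X-A)\cup(A\cap B)$ for the nontrivial direction. The only difference is that you work with independent sets (part~(1)) while the paper works with bases (part~(3)); since all three matroids have the same rank $r=\rkm S+\r N$, the paper checks $\cb(M\fsp N)=\cb(M|S\frp N)\cap\cb(M\frp N.(B-A))$ directly, which lets it replace your rank computation $\r M+\rk{N.(B-A)}{X-A}=\rkm S+\rkn{(X-A)\cup T}$ by the equivalent statement that $X-A\in\cs(N.(B-A))$ iff $(X-A)\cup(A\cap B)\in\cs(N)$, avoiding explicit use of the matching identity $\rkn T=\r M-\rkm S$.
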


\begin{proof}
  Let $r=\rkm{A-B} + \r N$.  By Theorem \ref{thm:crypt}, the
$r$-element subsets $X$ of $A\cup B$ such that
$X-B\in\ci (M)$ and $X\cap B\in\cs (N)$ are the bases of
of $M|(A-B)\frp N$, and those satisfying 
$X\cap A\in\ci (M)$ and $X- A\in\cs (N.(B-A))$, that is,
$X\cap A\in\ci (M)$ and $(X- A)\cup (A\cap B)\in\cs (N)$,
are the bases of $M\frp N.(B-A)$. The containment
$\cb (M|(A-B)\frp N)\cap\cb (M\frp N.(B-A))
\subseteq\cb (M\fsp N)$ is thus apparent.
For the reverse inclusion, it suffices to note
  that, for all $X\subseteq A\cup B$, we have $X-B\subseteq X\cap A$ and
  $X\cap B\subseteq (X-A)\cup (A\cap B)$, and so $X\cap A\in\ci (M)$
  implies that $X-B\in\ci (M)$, and $X\cap B\in\cs (N)$ implies that
  $(X- A)\cup (A\cap B)\in\cs (N)$.
\end{proof}

\section{Algebraic properties of the free splice
operation}\label{sec:assoc}

Unlike free product and direct sum, free splice is a nonassociative
operation.  In this section, we characterize the triples of matroids
for which associativity holds.  Furthermore, in Theorem
\ref{thm:assoc} below we show that a weakened version of associativity
holds in general.  The key to the proof of this result is a basic property of
free separators, given in the following lemma.
\begin{lem}\label{lem:fsepassoc}
  Let $L$ be a matroid on $A\cup B\cup C$.
  \begin{enumerate}[\rm (1)\,]
  \item If $(A\cup B, C)\in \fs L$ and $(A,B)\in\fs{L|(A\cup B)}$,
    then $(A, B\cup C)\in\fs L$.
  \item If $(A, B\cup C)\in\fs L$ and $(B,C)\in\fs{L.(B\cup C)}$, then
    $(A\cup B, C)\in\fs L$.
  \end{enumerate}
\end{lem}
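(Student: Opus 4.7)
The plan is to translate both hypotheses via Theorem \ref{thm:freesep} into statements about cyclic flats and then chase cyclic flats of $L$ through the two cases forced by the first hypothesis.

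For part (1), let $Z\in\cz(L)$. By the hypothesis that $(A\cup B,C)\in\fs L$, Theorem \ref{thm:freesep} yields either (a) $Z\subseteq A\cup B$ or (b) $(A\cup B)-C\subseteq Z$. In case (b), since $A-(B\cup C)\subseteq A-C\subseteq (A\cup B)-C$, we immediately get $A-(B\cup C)\subseteq Z$, which places $Z$ in $[A-(B\cup C),A\cup B\cup C]$, as required. In case (a), I would invoke the standard fact that any cyclic flat of $L$ contained in $A\cup B$ is again a cyclic flat of $L|(A\cup B)$: such a $Z$ is closed in $L|(A\cup B)$ because $\cl{L|(A\cup B)}{Z}=\cll{Z}\cap(A\cup B)=Z$, and $Z$ is a union of circuits of $L$, all of which lie in $A\cup B$ and hence are circuits of $L|(A\cup B)$. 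Applying $(A,B)\in\fs{L|(A\cup B)}$ via Theorem \ref{thm:freesep} then forces $Z\subseteq A$ or $A-B\subseteq Z$, and since $A-(B\cup C)\subseteq A-B$, both alternatives put $Z$ inside $[\emptyset,A]\cup[A-(B\cup C),A\cup B\cup C]$. Another application of Theorem \ref{thm:freesep} gives $(A,B\cup C)\in\fs L$.

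For part (2), I would derive it from part (1) by duality rather than repeating the cyclic-flat chase. Recall from the preliminaries that $(X,Y)\in\fs L$ if and only if $(Y,X)\in\fs{L^*}$, together with the standard identity $(L.S)^*=L^*|S$. Apply part (1) to $L^*$ with the relabeling $A'=C$, $B'=B$, $C'=A$: the hypothesis $(A,B\cup C)\in\fs L$ becomes $(A'\cup B',C')\in\fs{L^*}$, and the hypothesis $(B,C)\in\fs{L.(B\cup C)}$ becomes $(A',B')\in\fs{L^*|(A'\cup B')}$; the conclusion of part (1) then reads $(A',B'\cup C')\in\fs{L^*}$, which translates back to $(A\cup B,C)\in\fs L$.

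I expect the only real subtlety to be the claim that a cyclic flat $Z\subseteq A\cup B$ of $L$ is still a cyclic flat of $L|(A\cup B)$; this is where the two hypotheses actually interlock. Everything else is a direct set-theoretic calculation inside the Boolean lattice, and duality disposes of part (2) with no additional work.
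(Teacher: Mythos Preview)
Your argument is correct and follows essentially the same route as the paper: track cyclic flats, use the two set-theoretic inclusions $A-(B\cup C)\subseteq (A\cup B)-C$ and $A-(B\cup C)\subseteq A-B$, and handle part~(2) by duality. The only cosmetic difference is that the paper invokes Theorem~\ref{thm:crypt}(7) directly to obtain $\cz(L)\subseteq\cz(L|(A\cup B))\cup[(A\cup B)-C,A\cup B\cup C]$, whereas you go through Theorem~\ref{thm:freesep} and separately verify that a cyclic flat of $L$ contained in $A\cup B$ remains cyclic in the restriction---a true and easy fact, so this is not a genuine divergence.
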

\begin{proof}
  Suppose that $(A\cup B, C)\in \fs L$ and $(A,B)\in\fs{L|(A\cup B)}$.
  By Theorem \ref{thm:crypt} we have 
$$\cz (L)\,\subseteq\,\cz (L|(A\cup
  B))\cup\, [(A\cup B)-C, A\cup B\cup C ]
$$ 
and 
$$\cz (L|(A\cup
  B))\subseteq\, [\emptyset, A]\cup [A-B, A\cup B].
$$  
Since $A-(B\cup
  C)\subseteq (A\cup B)-C$ and $A-(B\cup C)\subseteq A-B$, it follows
  that $\cz (L)\subseteq [\emptyset , A]\cup [A-(B\cup C), A\cup B\cup
  C ]$, and hence Theorem \ref{thm:freesep} implies that $(A,B\cup
  C)\in\fs L$.  Statement (2) follows by duality.
\end{proof}

\begin{thm}\label{thm:assoc}
  Suppose $M(A)$, $N(B)$, and $P(C)$ are matroids. Let $U= A\cap
  (B\cup C)$ and $V= (A\cup B)\cap C$.
  \begin{enumerate}[\rm (1)\,]
  \item If $(M,N)$ and $(M\fsp N, P)$ are matched, then $(M\fsp N)\fsp
    P = M\fsp (N'\fsp P)$, where $N' = M.U\fsp N$.
  \item If $(N,P)$ and $(M,N\fsp P)$ are matched, then $M\fsp (N\fsp
    P) = (M\fsp N'')\fsp P$, where $N'' = N\fsp P|V$.
  \end{enumerate}
\end{thm}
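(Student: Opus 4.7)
The plan is to prove both parts through a uniform strategy: identify the appropriate free separator of the left-hand side using Lemma~\ref{lem:fsepassoc}, and then compute the relevant minor explicitly via Corollary~\ref{cor:minor2}.

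For part~(1), I would set $L=(M\fsp N)\fsp P$. The construction of $L$ gives $(A\cup B,C)\in\fs L$ with $L|(A\cup B)=M\fsp N$, and the construction of $M\fsp N$ gives $(A,B)\in\fs{L|(A\cup B)}$. Lemma~\ref{lem:fsepassoc}(1) then yields $(A,B\cup C)\in\fs L$, so $L=L|A\fsp L.(B\cup C)=M\fsp L.(B\cup C)$, reducing the problem to identifying $L.(B\cup C)$ with $N'\fsp P$. Applying Corollary~\ref{cor:minor2}(2) to $L=(M\fsp N)\fsp P$ with contraction set $B\cup C$ (which contains $C-(A\cup B)$), one obtains $L.(B\cup C)=(M\fsp N).(B\cup(A\cap C))\fsp P$. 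A second application of Corollary~\ref{cor:minor2}(2), this time to $M\fsp N$ with contraction set $B\cup(A\cap C)$ (which contains $B-A$), together with the intersection computations $(B\cup(A\cap C))\cap A=U$ and $(B\cup(A\cap C))\cap B=B$, gives $(M\fsp N).(B\cup(A\cap C))=M.U\fsp N=N'$, completing part~(1). The matching hypotheses required at each intermediate step are automatic, since each free splice that appears is already known to be a well-defined minor of $L$.

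For part~(2), the cleanest route is duality. Applying part~(1) to the triple $(P^*\!,N^*\!,M^*)$ on ground sets $(C,B,A)$ produces an identity that, when dualized via Proposition~\ref{pro:dual} and using $(M.U)^*=M^*|U$, translates exactly into part~(2), since the role of $\tilde U=\tilde A\cap(\tilde B\cup\tilde C)$ in the dual triple corresponds to $V=(A\cup B)\cap C$ in the original. Alternatively, one may argue directly in parallel with part~(1), using Lemma~\ref{lem:fsepassoc}(2) and Corollary~\ref{cor:minor2}(1) in place of their counterparts to show $(A\cup B,C)\in\fs L$ for $L=M\fsp(N\fsp P)$ and then to identify $L|(A\cup B)=M\fsp N''$. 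The only real obstacle in either approach is combinatorial bookkeeping—keeping track of intersections and complements of $A$, $B$, $C$ when applying the minor formulas—since once the free-separator identification is in place, the remaining work is mechanical.
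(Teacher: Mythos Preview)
Your proposal is correct and essentially identical to the paper's proof: both use Lemma~\ref{lem:fsepassoc}(1) to get $(A,B\cup C)\in\fs L$, then apply Corollary~\ref{cor:minor2}(2) twice to compute $L.(B\cup C)$ (the paper writes the intermediate set as $(A\cup B)\cap(B\cup C)$, you write it as $B\cup(A\cap C)$, which is the same), and both deduce part~(2) by duality.
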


\begin{proof}
  Let $L=(M\fsp N)\fsp P$.  Since $(A,B)\in\fs{M\fsp N}=\fs{L|(A\cup
    B)}$ and $(A\cup B, C)\in\fs L$, Lemma \ref{lem:fsepassoc} implies
  that $(A, B\cup C)\in\fs L$, that is, $L=L|A\fsp L.(B\cup C)$.
  Now $L|A=L|(A\cup B)|A=(M\fsp N)|A=M$, and Corollary
  \ref{cor:minor2} gives
  \begin{align*}
    L.(B\cup C) &\= (M\fsp N).((A\cup B)\cap (B\cup C))\fssp P\\
    &\= (M.(A\cap ((A\cup B)\cap (B\cup C)))\fssp N)\fssp P\\
    &\= (M.U\fsp N)\fssp P,
  \end{align*}
  and hence (1) follows.  Statement (2) follows by duality.
\end{proof}

\begin{cor}\label{cor:absorb}
  Suppose that $M(A)$, $N(B)$, and $P(C)$ are matroids.
  \begin{enumerate}[\rm (1)\,]
  \item If $(M,N)$ and $(M\fsp N, P)$ are matched and $B\subseteq C$,
    then $(M\fsp N)\fsp P = M\fsp P$.
  \item If $(N,P)$ and $(M, N\fsp P)$ are matched and $B\subseteq A$,
    then $M\fsp (N\fsp P) = M\fsp P$.
  \end{enumerate}
\end{cor}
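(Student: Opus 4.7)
The plan is to deduce both parts directly from Theorem \ref{thm:assoc}, combined with the absorption observation recorded just after Theorem \ref{thm:freest}: for matched matroids $Q(X)$ and $R(Y)$, the containment $X\subseteq Y$ forces $Q\fsp R = R$ and the containment $Y\subseteq X$ forces $Q\fsp R = Q$.

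For (1), I will apply Theorem \ref{thm:assoc}(1) to obtain $(M\fsp N)\fsp P = M\fssp (N'\fssp P)$, where $N' = M.U\fssp N$ and $U = A\cap (B\cup C)$. The hypothesis $B\subseteq C$ gives $U = A\cap C$, so the ground set of $N'$ is $(A\cap C)\cup B$, which is contained in $C$. The remaining task is to verify that $(N', P)$ is matched, for then the absorption principle yields $N'\fsp P = P$ and the conclusion $(M\fsp N)\fsp P = M\fsp P$ follows. To check this matching I will invoke Corollary \ref{cor:minor2}(2) at $X = (A\cap C)\cup B$ (noting $B - A \subseteq X$), obtaining
$$
(M\fsp N).\bigl((A\cap C)\cup B\bigr) \= M.\bigl((A\cap C)\cup (A\cap B)\bigr)\fssp N,
$$
and then observe that $B\subseteq C$ collapses $(A\cap C)\cup (A\cap B)$ to $A\cap C$, so the right-hand side is exactly $N'$. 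The matching hypothesis on $(M\fsp N, P)$ then identifies this with $P|((A\cap C)\cup B)$, which both secures the matching of $(N', P)$ and makes the absorption step transparent.

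Part (2) follows by a symmetric argument using Theorem \ref{thm:assoc}(2): with $B\subseteq A$ we have $V = (A\cup B)\cap C = A\cap C$, so the ground set of $N'' = N\fssp P|V$ is $B\cup (A\cap C)\subseteq A$. Applying Corollary \ref{cor:minor2}(1) to $N\fsp P$ at $B\cup (A\cap C)$ (which trivially contains $B - C$) gives $(N\fsp P)|(B\cup (A\cap C)) = N''$, and combining this with the matching of $(M, N\fsp P)$ produces $M.(B\cup (A\cap C)) = N''$. This is the matching of $(M, N'')$, so the absorption principle delivers $M\fsp N'' = M$ and hence $M\fsp (N\fsp P) = M\fsp P$.

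The only mild obstacle is the bookkeeping needed to confirm that the matching conditions for the intermediate pairs $(N', P)$ and $(M, N'')$ arising from Theorem \ref{thm:assoc} really do hold under the extra containment hypotheses; once that is checked, each part of the corollary reduces to a single invocation of the absorption principle.
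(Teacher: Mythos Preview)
Your proposal is correct and follows essentially the same approach as the paper: apply Theorem \ref{thm:assoc}, observe that the ground set of the intermediate matroid ($N'$ in part (1), $N''$ in part (2)) is contained in that of the outer factor, and invoke the absorption remark after Theorem \ref{thm:freest}. The paper's proof is terser---it does not separately verify that $(N',P)$ is matched, since the very statement of Theorem \ref{thm:assoc}(1) already asserts that $N'\fsp P$ is a well-defined matroid---and it derives part (2) by duality rather than by the symmetric direct argument you give, but these are cosmetic differences.
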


\begin{proof}
  If $B\subseteq C$, then the ground set of the matroid $N'$ in
  Theorem \ref{thm:assoc} is contained in $C$. 
  As noted after Theorem \ref{thm:freest}, it follows that $N'\fsp
  P = P$, and so (1) follows by Theorem \ref{thm:assoc}.  Statement
  (2) follows by duality.
\end{proof}

We now consider the simplest situation in which free splice is
associative.

\begin{pro}\label{pro:emptyassoc}
  If $M(A)$, $N(B)$, and $P(C)$ are matroids with $A\cap C\subseteq
  B$, then the following are equivalent:
  \begin{itemize}
  \item[(1)] $(M,N)$ and $(N,P)$ are matched,
  \item[(2)] $(M,N)$ and $(M\fsp N, P)$ are matched,
  \item[(3)] $(N,P)$ and $(M, N\fsp P)$ are matched.
  \end{itemize}
  Furthermore, if these conditions are satisfied, then $(M\fsp N)\fsp
  P =M\fsp (N\fsp P)$.
\end{pro}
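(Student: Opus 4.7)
The plan is to reduce each of the three matching conditions to the same pair of equations, by exploiting the hypothesis $A \cap C \subseteq B$ together with the defining identities of free splice. First I would simplify the intersections appearing in the matching conditions: since $A\cap C\subseteq B$, we have $(A\cup B)\cap C = (A\cap C)\cup (B\cap C) = B\cap C$ and dually $A\cap (B\cup C) = (A\cap B)\cup (A\cap C) = A\cap B$. Therefore condition (2) asserts $M.(A\cap B)=N|(A\cap B)$ together with $(M\fsp N).(B\cap C) = P|(B\cap C)$, while (3) asserts $N.(B\cap C)=P|(B\cap C)$ together with $M.(A\cap B)=(N\fsp P)|(A\cap B)$.

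The key observation is that the appearance of $M\fsp N$ (respectively $N\fsp P$) in these conditions is spurious. Indeed, for $X\subseteq Y\subseteq E$ one has $L.X = (L.Y).X$, since both equal $L/(E-X)$. Applying this with $L=M\fsp N$, $Y=B$, $X=B\cap C$, and using $(M\fsp N).B = N$ from the splice property, yields $(M\fsp N).(B\cap C) = N.(B\cap C)$. Dually, iterated restriction together with $(N\fsp P)|B = N$ gives $(N\fsp P)|(A\cap B) = N|(A\cap B)$. Substituting these identities into the reformulations of (2) and (3) shows that both reduce to the conjunction $M.(A\cap B)=N|(A\cap B)$ and $N.(B\cap C)=P|(B\cap C)$, which is precisely (1). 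Hence (1), (2), and (3) are equivalent.

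For the associativity claim, assuming these equivalent conditions, I would apply Theorem~\ref{thm:assoc}(1): the matroid $(M\fsp N)\fsp P$ equals $M\fsp(N'\fsp P)$, where $N' = M.U\fsp N$ and $U = A\cap(B\cup C) = A\cap B$. The matching of $(M,N)$ gives $M.(A\cap B) = N|(A\cap B)$, and since the ground set $A\cap B$ of $M.(A\cap B)$ is a subset of $B$, the remark following Theorem~\ref{thm:freest} applies: whenever $X\subseteq Y$ and $M'= N|X$ on ground set $X$, then $M'\fsp N = N$. Hence $N' = N$, and we conclude $(M\fsp N)\fsp P = M\fsp(N\fsp P)$.

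I do not anticipate any serious obstacle; the only conceptual point to get right is recognizing that the hypothesis $A\cap C\subseteq B$ positions the relevant intersections entirely within the ``$N$-part'' of each free splice, which is exactly what collapses $(M\fsp N).(B\cap C)$ to $N.(B\cap C)$ and $(N\fsp P)|(A\cap B)$ to $N|(A\cap B)$, and which makes the inner factor $N'$ in Theorem~\ref{thm:assoc}(1) reduce to $N$.
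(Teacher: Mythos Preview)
Your proposal is correct and follows essentially the same approach as the paper: reduce the intersections via $A\cap C\subseteq B$, collapse $(M\fsp N).(B\cap C)$ to $N.(B\cap C)$ using $(M\fsp N).B=N$ (and dually for the restriction), and then invoke Theorem~\ref{thm:assoc} together with $A\cap(B\cup C)\subseteq B$ to get $N'=N$. The only cosmetic difference is that the paper handles the equivalence $(1)\Leftrightarrow(3)$ by appealing to duality rather than writing out the restriction argument directly as you do.
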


\begin{proof}
  Suppose $(M,N)$ is matched.  Since $A\cap C\subseteq B$, we have
  $(A\cup B)\cap C = B\cap C$, and so the pair $(M\fsp N,P)$ being
  matched means that $(M\fsp N).(B\cap C) =P|(B\cap C)$.  But $(M\fsp
  N).(B\cap C)=(M\fsp N).B.(B\cap C)=N.(B\cap C)$, and thus $(M\fsp N,
  P)$ is matched if and only if $N.(B\cap C)=P|(B\cap C)$.  Hence (1)
  and (2) are equivalent.  The equivalence of (1) and (3) follows by
  duality.

  By Theorem \ref{thm:assoc}, statement (2) implies that $(M\fsp
  N)\fsp P = M\fsp (N'\fsp P)$, where $N' = M.(A\cap (B\cup C))\fsp
  N$.  The fact that $A\cap (B\cup C)\subseteq B$ then gives $N'=N$.
\end{proof}

The next proposition is a type of commutativity result, showing when a
matroid occurs as both a left and a right factor in a free splice.
This result is essential to the proof of Theorem \ref{thm:associate},
which characterizes associative triples of matroids.

\begin{pro}\label{pro:com}
  Suppose $L(E)$ is a matroid with $E=A\cup B= B\cup C$.  Let
  $S=A-B=C-B$.  The pairs $(A,B)$ and $(B,C)$ are free separators of
  $L$ and $L|B=L.B$ if and only if either $S=\emptyset$ or one of the
  statements (a)--(c) holds:
  \begin{itemize}
  \item[(a)] $\;S\cup (B-A)\subseteq\isthm L$,
  \item[(b)] $\;S\cup (B-C)\subseteq\loops L$,
  \item[(c)]
    \begin{enumerate}[\rm (i)]
    \item $\;B\subseteq A\cup C$,
    \item $\;B-A\subseteq\isthm L$ and $B-C\subseteq\loops L$, and
    \item $(S,A\cap B\cap C)$ is a modular pair in $L$.
    \end{enumerate}
  \end{itemize}
\end{pro}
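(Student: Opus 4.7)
The plan is to exploit the hypothesis $L|B = L.B$ to force a direct-sum decomposition of $L$, and then read the free-separator hypotheses off the cyclic flats via Theorem~\ref{thm:freesep}.

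First I would show that $L|B = L.B$ if and only if $L = L|B\oplus L|S$, where $S = A-B = C-B = E-B$. Comparing rank functions, for $X\subseteq B$ one has $\rk{L.B}X = \rk L{X\cup S} - \rk L S$, so equality with $\rk L X$ for all $X\subseteq B$ reduces on taking $X=B$ to $\rk L B + \rk L S = \rk L E$, the condition that $(B,S)$ is a separator of $L$; conversely, this single equation gives the direct sum and hence the general rank identity. If $S = \emptyset$ then $B = E$ and all three hypotheses hold trivially, so I would assume $S\neq\emptyset$ from this point on.

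Under the decomposition $L = L|B\oplus L|S$, the cyclic flats of $L$ are precisely the disjoint unions $Z_B\cup Z_S$ with $Z_B\in\cz(L|B)$ and $Z_S\in\cz(L|S)$. Applying Theorem~\ref{thm:freesep}, and using $S\cap B = \emptyset$ and $S\subseteq A$, the condition $(A,B)\in\fs L$ becomes ``for all $Z_B,Z_S$, $Z_B\subseteq A\cap B$ or $Z_S = S$'', while $(B,C)\in\fs L$ becomes ``for all $Z_B,Z_S$, $Z_S = \emptyset$ or $B-C\subseteq Z_B$''. Each such universal disjunction factors as a disjunction of universals, and I would translate each universal using standard facts: $\forall Z_B\, Z_B\subseteq A\cap B \iff B-A\subseteq\isthm L$ (no element of $B-A$ lies in a cyclic flat), $\forall Z_S\, Z_S = S \iff S\subseteq\loops L$ (the only matroid on $S$ whose unique cyclic flat is $S$ itself is the all-loops matroid), $\forall Z_S\, Z_S = \emptyset \iff S\subseteq\isthm L$ (so that $L|S$ is free), and $\forall Z_B\, B-C\subseteq Z_B \iff B-C\subseteq\loops L$ (the intersection of all cyclic flats of a matroid is its loop set). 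This yields
\[
(A,B)\in\fs L \iff B-A\subseteq\isthm L \text{ or } S\subseteq\loops L,
\]
\[
(B,C)\in\fs L \iff S\subseteq\isthm L \text{ or } B-C\subseteq\loops L.
\]

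Combining gives four cases. Since $S\neq\emptyset$ and no element is both a loop and an isthmus, the case $S\subseteq\loops L\cap\isthm L$ is empty. The pure-isthmus case yields (a), the pure-loop case yields (b), and the mixed case $B-A\subseteq\isthm L$ and $B-C\subseteq\loops L$ supplies condition (ii) of (c); condition (i) then follows because any $b\in B\setminus(A\cup C)$ would be simultaneously a loop and an isthmus, and (iii) is immediate from the direct sum, across which ranks split. For the converse, (a) and (b) are direct: the relevant isthmuses (respectively loops) put all cyclic flats inside $A\cap B$ (respectively force them to contain $B-C$), and $S\subseteq\isthm L$ or $S\subseteq\loops L$ yields the separator condition by a one-line rank computation. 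For (c), using the partition $B = (B-C)\sqcup(A\cap B\cap C)\sqcup(B-A)$ and (ii), I would compute $\rk L B = \rk L{A\cap B\cap C}+|B-A|$, while $\rk L E = \rk L A + |B-A|$ together with (iii) gives $\rk L E - \rk L B = \rk L S$, yielding $L = L|B\oplus L|S$; the free-separator conditions then follow from (i) and (ii). The main obstacle is the quantifier manipulation that extracts the loop and isthmus disjunctions from the cyclic-flat conditions; the rank bookkeeping in the sufficiency of (c) is routine but must carefully exploit the three-way partition of $B$ afforded by (i).
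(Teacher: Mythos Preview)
Your proof is correct and takes a different route from the paper's in the forward direction. You first observe that $L|B = L.B$ is equivalent to the direct-sum decomposition $L = L|B \oplus L|S$, and then read both free-separator conditions off the cyclic flats of the direct sum via Theorem~\ref{thm:freesep}; the quantifier factoring $\forall Z_B\,\forall Z_S\,(P(Z_B)\vee Q(Z_S)) \Leftrightarrow (\forall Z_B\, P)\vee(\forall Z_S\, Q)$ and the four loop/isthmus translations are all sound. The paper instead writes $L = L|A\fsp L.B$, notes that the hypothesis $L|B = L.B$ forces the restriction $(L|A\fsp L.B)|B$ to take the ``simple'' form $L|(A\cap B)\fsp (L.B)|B$, and then invokes Corollary~\ref{cor:minor1} (the characterization of when such simple minor formulas for free splices hold) to obtain a dichotomy; the dual argument through $(B,C)$ gives a second dichotomy, and the same four-case combination ensues. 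Your approach is more elementary and self-contained, bypassing the free-splice minor machinery entirely; the paper's approach stays inside the free-splice formalism and lets Corollary~\ref{cor:minor1} do the structural work. The converse arguments---locating the cyclic flats under each of (a), (b), (c), applying Theorem~\ref{thm:freesep}, and verifying $L|B=L.B$ via a rank computation---are essentially the same in both proofs.
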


\begin{proof}
  The assertion is obvious if $S=\emptyset$, so assume $S\ne
  \emptyset$.  Suppose $L|B=L.B$ and $(A,B),(B,C)\in\fs L$.  Thus,
  $L=L|A\fsp L.B$.  Since $L|B=L.B$ and $A\cap B\subseteq B$, we
  have $L|B=L|(A\cap B)\fsp L.B|B$, so, by Corollary
  \ref{cor:minor1} with $X=B$, one of the following statements holds:
  \begin{itemize}
  \item[(1)] $r_{_{L|A}}(S)=0$, so all elements of $S$ are loops of
    $L$, or
  \item[(2)] all elements of $B-A$ are isthmuses of $L.B$ and so of
    $L$; also, $(S,A\cap B)$ is a modular pair in $L|A$ and so in $L$.
  \end{itemize}
  Similarly, $L=L|B\fsp L.C$ and $L.B=L|B.B\fsp L.(B\cap C)$, so
  Corollary \ref{cor:minor1} implies that one of the following
  statements holds:
  \begin{itemize}
  \item[(1$'$)] all elements of $S$ are isthmuses of $L.C$ and so of
    $L$, or
  \item[(2$'$)] all elements of $B-C$ are loops of $L|B$ and so of
    $L$; also, $(B\cap C,S)$ is a modular pair in $L.C$, that is,
    $(B,S\cup (B-C))$ is a modular pair in $L$.
  \end{itemize}
  Since $S$ is nonempty, statements (1) and (1$'$) are incompatible.
  Statements (2) and (2$'$) can both hold only if $B\subseteq A\cup
  C$; furthermore, in this case the assertions about loops and
  isthmuses reduce the modularity conditions to one, namely, that in
  part (iii) of (c).  Note also that the modularity assertion in
  statement (2$'$) would follow immediately from statement (1); a
  similar remark applies to statements (1$'$) and (2). Hence one of
  the statements (a)--(c) holds.

  Conversely, when (a) holds, each cyclic flat of $L$ is contained in
  $A\cap B$; when (b) holds, each cyclic flat contains $B-C$ and $S$; and 
  when (c) holds, each cyclic flat contains $B-C$ and is contained in
  $A$. In each case, Theorem \ref{thm:freesep} implies that
  $(A,B),(B,C)\in\fs L$.  When (a) or (b) holds, the fact that the elements of
  $S$ are loops or isthmuses yields $L|B=L.B$; when (c) holds, (ii)
  and (iii) yield $L|B=L.B$.
\end{proof}

For convenience, we give the following restatement of Proposition
\ref{pro:com}.

\begin{pro}\label{pro:comrecast}
  Let $M(A)$, $N(B)$, and $P(C)$ be matroids with $A\cup B = B\cup C$.
  Let $S=A-B=C-B$ and $T=A\cap B\cap C$.  The equality $M\fsp N=N\fsp
  P$ holds if and only if either $S=\emptyset$ or one of the statements
  (a)--(c) holds:
  \begin{itemize}
  \item[(a)] $M= I(S)\oplus Q$,\; $N= Q\oplus I(B-A)$,\; and\;\;
    $P= I(S)\oplus Q.T\oplus I(C-A)$,\\
    for some matroid $Q(A\cap B)$,\;\;
  \item[(b)] $M= I^*(S)\oplus R|T\oplus I^*(A-C)$,\; $N= R\oplus
    I^*(B-C)$,\; and\;\;
    $P= I^*(S)\oplus R$,\\
    for some $R(B\cap C)$, or
  \item[(c)] $M=Q(S)\oplus R(T)\oplus I^*(A-C)$,\; $N=R(T)\oplus
    I^*(A-C)\oplus I(C-A)$,\; and $P=Q(S)\oplus R(T)\oplus I(C-A)$,
    for some $Q(S)$ and $R(T)$.
  \end{itemize}
\end{pro}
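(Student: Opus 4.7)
The plan is to observe that the result is essentially a dictionary translating Proposition~\ref{pro:com} from statements about a single matroid $L$ to statements about the three matroids $M$, $N$, $P$ via the identifications $M = L|A$, $N = L|B = L.B$, and $P = L.C$.

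For the forward direction, suppose $L = M\fsp N = N\fsp P$. From $L = M\fsp N$ I get $(A,B)\in\fs L$ with $L|A=M$ and $L.B=N$; from $L = N\fsp P$ I get $(B,C)\in\fs L$ with $L|B=N$ and $L.C=P$. In particular $L|B = N = L.B$, so $L$ satisfies the hypotheses of Proposition~\ref{pro:com}, and hence either $S=\emptyset$ or one of the structural descriptions (a)--(c) of that proposition holds. The case $S=\emptyset$ is immediate, since then $A\subseteq B$ and $C\subseteq B$, so $M\fsp N = N = N\fsp P$ by the remarks following Theorem~\ref{thm:freest}. In each of the remaining three cases I would compute $L|A$, $L|B$, and $L.C$ directly from the structural description of $L$, using the set identities $A-C = (A\cap B)-T$ and $C-A = (B\cap C)-A$ that follow from $A\cup B = B\cup C$. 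Case (a) gives $L = Q \oplus I(S\cup(B-A))$ for $Q = L|(A\cap B)$, and case (b) is dual. In case (c), the isthmus and loop conditions write $L$ as $L|(A\cap C)\oplus I(B-A)\oplus I^*(B-C)$, and the modularity of the disjoint pair $(S,T)$ with union $A\cap C$ forces $L|(A\cap C) = L|S\oplus L|T$; setting $Q=L|S$ and $R=L|T$ then produces the forms stated in (c).

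For the reverse direction, given $(M,N,P)$ in one of the three forms, I would exhibit a candidate $L$ and verify $L|A=M$, $L|B=L.B=N$, and $L.C=P$. The choices are $L=Q\oplus I(S\cup(B-A))$ in case (a), $L=R\oplus I^*(S\cup(B-C))$ in case (b), and $L=Q\oplus R\oplus I(B-A)\oplus I^*(B-C)$ in case (c). In each instance the cyclic flats of $L$ are straightforward to describe, and Theorem~\ref{thm:freesep} then shows both $(A,B)$ and $(B,C)$ lie in $\fs L$; consequently $L = L|A\fsp L.B = M\fsp N$ and $L = L|B\fsp L.C = N\fsp P$, giving the desired equality.

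The hard part will be keeping track of the various set decompositions forced by $A\cup B = B\cup C$, especially in case (c), where modularity of $(S,T)$ in $L$ must be converted into a direct sum decomposition of $L|(A\cap C)$; once this is handled, the computations in both directions are essentially bookkeeping.
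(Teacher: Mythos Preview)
Your proposal is correct and follows exactly the approach the paper intends: the paper introduces Proposition~\ref{pro:comrecast} as a ``restatement of Proposition~\ref{pro:com}'' and gives no further proof, so what you outline is precisely the translation that is being asserted. Your dictionary $M=L|A$, $N=L|B=L.B$, $P=L.C$, together with the set identities forced by $A\cup B=B\cup C$ and (in case~(c)) the observation that modularity of the disjoint pair $(S,T)$ makes $S$ a separator of $L|(A\cap C)$, is the intended argument, and your candidate matroids $L$ for the converse are the natural ones.
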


As a special case, we have the following characterization of the pairs
for which free splice is commutative.
\begin{cor}
  If $M(A)$ and $N(B)$ are matched, then $M\fsp N=N\fsp M$ if and only
  if one of the following conditions holds:
  \begin{itemize}
  \item[(a)] $A-B\subseteq\loops M$ and $B-A\subseteq\loops N$,
  \item[(b)] $A-B\subseteq\isthm M$ and $B-A\subseteq\isthm N$,
\item[(c)] $B\subseteq A$ and $M=M|B\oplus M|(A-B)$,
\item[(d)] $A\subseteq B$ and $N=N|A\oplus N|(B-A)$.
  \end{itemize}
\end{cor}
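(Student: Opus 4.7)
The plan is to derive this corollary directly from Proposition \ref{pro:comrecast}, applied with $P=M$ and $C=A$. With this substitution the hypothesis $A\cup B=B\cup C$ is automatic, while $S=A-B$, $T=A\cap B$, and $A-C=C-A=\emptyset$. Note that writing the equation $M\fsp N=N\fsp M$ presupposes both sides are defined, so $(N,M)$ is matched in addition to the given matching of $(M,N)$.

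For the forward direction, assume $M\fsp N=N\fsp M$, so by Proposition \ref{pro:comrecast} either $S=\emptyset$ or one of its statements (a)--(c) holds. If $S=\emptyset$, then $A\subseteq B$, and combining the two matching conditions gives $N|A=M=N.A$, hence $N=N|A\oplus N|(B-A)$, which is case (d) of the corollary. In case (a) of Proposition \ref{pro:comrecast}, the condition on $P$ collapses to $Q.T=Q$, which holds automatically since $Q$ is on $T=A\cap B$; what remains is $M=I(A-B)\oplus Q$ and $N=Q\oplus I(B-A)$, that is, $A-B\subseteq\isthm M$ and $B-A\subseteq\isthm N$, giving case (b) of the corollary. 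The dual case (b) of Proposition \ref{pro:comrecast} yields $A-B\subseteq\loops M$ and $B-A\subseteq\loops N$, which is case (a) of the corollary. Finally, case (c) of Proposition \ref{pro:comrecast} forces $B\subseteq A\cup C=A$ and produces $M=M|(A-B)\oplus M|B$ with $N=M|B$, which is case (c) of the corollary.

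Conversely, assume one of (a)--(d) of the corollary holds. The hypothesis that $(M,N)$ is matched, together with the standard fact that contracting loops (respectively, isthmuses) is the same as deleting them, shows that $(N,M)$ is also matched in each case. Moreover, in each case the matroids $M$ and $N$ fall into one of the decompositions described by (a)--(c) of Proposition \ref{pro:comrecast} (after the substitution $P=M$, $C=A$), or else $S=\emptyset$ holds; Proposition \ref{pro:comrecast} then delivers $M\fsp N=N\fsp M$.

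The main obstacle is purely bookkeeping: ensuring that after the substitution $P=M$ and $C=A$ the three explicit decompositions of Proposition \ref{pro:comrecast} collapse cleanly to the four cases of the corollary (with case (d) absorbed into the $S=\emptyset$ clause via the matching of $(N,M)$), and that the matching condition needed for $N\fsp M$ to be defined is automatic in each of (a)--(d).
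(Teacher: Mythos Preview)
Your proof is correct and is exactly the specialization the paper has in mind: the corollary is stated immediately after Proposition~\ref{pro:comrecast} as ``a special case,'' with no separate proof given, and your substitution $P=M$, $C=A$ carries out that specialization cleanly. Your observation that the $S=\emptyset$ clause, combined with the matching of $(N,M)$, yields case~(d), and that each of (a)--(d) in turn forces $(N,M)$ to be matched, fills in precisely the bookkeeping the paper leaves implicit.
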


We now characterize the triples for which free splice is associative.

\begin{thm}\label{thm:associate}
  Suppose that $M(A)$, $N(B)$, and $P(C)$ are matroids and that the
  pairs $(M,N)$, $(M\fsp N, P)$, $(N,P)$, and $(M, N\fsp P)$ are
  matched.  The equality $(M\fsp N)\fsp P = M\fsp (N\fsp P)$ holds if
  and only if either $(A\cap C)-B=\emptyset$ or one of statements
  (a)--(c) holds:
  \begin{itemize}
  \item[(a)] $(A\cap C)-B\subseteq\isthm M$ and $B-A\subseteq\isthm
    N$,
  \item[(b)] $(A\cap C)-B\subseteq\loops P$ and $B-C\subseteq\loops
    N$,
  \item[(c)]
    \begin{itemize}
    \item[(i)] $B\subseteq A\cup C$,
    \item[(ii)] $B-A\subseteq\isthm N$ and $B-C\subseteq\loops N$, and
    \item[(iii)] $((A\cap C)-B, A\cap B\cap C)$ is a modular pair in
      $P$.
    \end{itemize}
  \end{itemize}
\end{thm}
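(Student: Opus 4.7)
\emph{Proof plan.} The strategy is to reduce the associativity equation to one of the form $Q\fsp N = N\fsp R$, to which Proposition~\ref{pro:comrecast} then applies. By Theorem~\ref{thm:assoc}(1) we have $(M\fsp N)\fsp P = M\fsp (N'\fsp P)$, where $N' = M.U\fsp N$. Since both $N'\fsp P$ and $N\fsp P$ are matroids on $B\cup C$, contracting each side of $M\fsp (N'\fsp P) = M\fsp (N\fsp P)$ to $B\cup C$ recovers the right factor of the free splice; hence the associativity equation is equivalent to $N'\fsp P = N\fsp P$.

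To rewrite the right-hand side, observe that the free separator $(B,C)$ of $N\fsp P$ together with Theorem~\ref{thm:freesep} gives
$$\cz (N\fsp P)\,\subseteq\, [\emptyset, B]\cup [B-C,\, B\cup C]\,\subseteq\, [\emptyset,\, B\cup (A\cap C)]\cup [B-C,\, B\cup C],$$
so $(B\cup (A\cap C),\, C)$ is also a free separator of $N\fsp P$. Using Corollary~\ref{cor:minor2}(1) to compute $(N\fsp P)|(B\cup (A\cap C)) = N\fsp P|((A\cup B)\cap C)$, we obtain $N\fsp P = (N\fsp P|((A\cup B)\cap C))\fsp P$. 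Since $N'\fsp P$ and this last expression are both free splices on $B\cup C$ with right factor $P$ whose left factors share the ground set $B\cup (A\cap C)$, the equation $N'\fsp P = N\fsp P$ is equivalent to equality of the left factors, namely
$$M.U\fsp N\= N\fsp P|((A\cup B)\cap C).$$

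This is precisely the setting of Proposition~\ref{pro:comrecast}, with the roles of $M$, $N$, $P$ there played by $M.U$, $N$, and $P|((A\cup B)\cap C)$: the ground sets $U$, $B$, and $(A\cup B)\cap C$ satisfy $U\cup B = B\cup ((A\cup B)\cap C) = B\cup (A\cap C)$, and the Proposition's parameters $S$ and $T$ specialize to $(A\cap C)-B$ and $A\cap B\cap C$, matching the notation of the theorem. The four matching hypotheses of the theorem translate, with minor bookkeeping, into those required by the Proposition. It then remains to show that the Proposition's structural conditions~(a)--(c) on $(M.U,\, N,\, P|((A\cup B)\cap C))$ are equivalent, in the presence of the matching hypotheses, to conditions~(a)--(c) of the theorem on $(M, N, P)$. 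The key tools are that an element of $U$ is an isthmus of $M.U$ if and only if it is an isthmus of $M$ (and dually for loops of $P|((A\cup B)\cap C)$), and that the matching identity $(M\fsp N).((A\cup B)\cap C) = P|((A\cup B)\cap C)$, combined with Corollary~\ref{cor:minor1}, gives $P|((A\cup B)\cap C) = M.(A\cap C)\fsp N.(B\cap C)$, which exposes the direct-sum structure demanded by the Proposition in each case. The main technical obstacle will be the modular-pair condition in case~(c): verifying that, under the loop/isthmus hypotheses (c)(i)--(ii), modularity of $((A\cap C)-B,\, A\cap B\cap C)$ in $P|((A\cup B)\cap C)$ is equivalent to the same modularity in $P$ itself.
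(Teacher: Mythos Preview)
Your plan is correct and follows essentially the same route as the paper: both reduce associativity to the single equation $M.U\fsp N = N\fsp P|V$ (your derivation via ``strip off $M$, rewrite $N\fsp P$ as $(N\fsp P|V)\fsp P$, strip off $P$'' is the paper's argument rearranged), and both then invoke Proposition~\ref{pro:comrecast}/\ref{pro:com} with $S=(A\cap C)-B$ and $T=A\cap B\cap C$. One practical note: for the converse translation the paper does not use Corollary~\ref{cor:minor1} to decompose $P|V$, but instead uses Corollary~\ref{cor:loopisth} to push isthmus/loop information through the free splice and then reads off the direct-sum factors $Q$ and $R$ by hand in each case; you may find that cleaner than verifying the hypotheses of Corollary~\ref{cor:minor1} three times.
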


\begin{proof}
  Proposition \ref{pro:emptyassoc} treats the case $A\cap C\subseteq
  B$, so assume $(A\cap C)-B\ne\emptyset$.  Let $U= A\cap (B\cup C)$
  and $V= (A\cup B)\cap C$.  The hypotheses and Theorem
  \ref{thm:assoc} give 
$$
(M\fsp N)\fssp P \= M\fssp ((M.U\fsp N)\fssp P).
$$ 
Also, $N\fsp P = (N\fsp P|V)\fsp P$ by Corollary
\ref{cor:absorb} and the hypotheses, so
$$
M\fssp (N\fsp P) \= M\fssp ((N\fsp P|V)\fssp P).
$$
Hence $(M\fsp N)\fsp P= M\fsp (N\fsp P)$ is equivalent to
$$
M\fssp ((M.U\fsp N)\fssp P)\= M\fssp ((N\fsp P|V)\fssp P).
$$
Since the underlying free separators are the same, it follows that
this equation is equivalent to $(M.U\fsp N)\fsp P= (N\fsp P|V)\fsp
P$ and, in turn, to
\begin{equation}\label{eq:target}
    M.U\fssp N\=N\fssp P|V. 
\end{equation}
If this equation holds, then one of statements (a)--(c) in Proposition
\ref{pro:com} holds; in a straightforward manner, these statements
imply, respectively, statements (a)--(c) above.

For the converse, first assume that (a) holds.  Let $S=U-B=V-B$ and
$T=U\cap B\cap V$.  Since $B-A\subseteq\isthm N$, Corollary
\ref{cor:loopisth} implies that all isthmuses of $M$ are isthmuses of
$M\fsp N$.  Since, in addition, $(M\fsp N).V=P|V$ (because the pair
$(M\fsp N,P)$ is matched), the hypothesis $S\subseteq \isthm M$ gives
$S\subseteq \isthm {P|V}$.  Setting $Q=M.(A\cap B)=N|(A\cap B)$ now
gives 
$$M.U\=I(S)\oplus Q, \quad N\=Q\oplus I(B-V), \quad \text{and}
\quad P|V\=I(S)\oplus Q.T\oplus I(V-U).
$$
From this and part (a) of
Proposition \ref{pro:comrecast}, Equation~(\ref{eq:target}) follows,
as needed.  Statement (b) is handled similarly or via duality.  Now
assume statement (c) holds.  Note that $(M,P)$ is matched since
$(M\fsp N,P)$ is matched and $B\subseteq A\cup C$.  By (iii), the
matroid $M.(A\cap C)=P|(A\cap C)$ is $Q(S)\oplus R(T)$ for some
matroids $Q$ and $R$.  Since $(M\fsp N).V=P|V$, statement (ii) gives
$B-A\subseteq \isthm{P|V}$, that is, $V-U\subseteq \isthm{P|V}$.
Similarly, $U-V\subseteq \loops{M.U}$.  These observations give
$$
M.U\=Q(S)\oplus R(T)\oplus I^*(U-V), \qquad N\=R(T)\oplus
I^*(U-V)\oplus I(V-U),
$$
and
$$
P|V\=Q(S)\oplus R(T)\oplus I(V-U).
$$
Thus, part (c) of Proposition \ref{pro:comrecast} applies and gives
Equation~(\ref{eq:target}), thereby completing the proof.
\end{proof}

\section{Splices and classes of matroids}
\label{sec:class}

Given a class $\mathcal{C}$ of matroids, it is natural to ask whether
$\mathcal{C}$ contains all splices, the free splice, or at least one
splice of any two matched matroids in $\mathcal{C}$.  Much of this
section shows that even the weakest of these questions has a positive
answer for few of the commonly-studied classes of matroids.  Also, we
show that even the simplest nontrivial class of matroids that is
generated by the free splice --- that obtained by starting with loops
and isthmuses and iteratively taking the free splice --- is huge and
has some striking properties, notably its failure to be minor-closed.
These results may make one wonder whether any nontrivial minor-closed
class of matroids is closed under free splice; to address this
question, we identify sufficient conditions for the excluded minors
that guarantee that the corresponding minor-closed class of matroids
is also closed under the free splice, and we show that, for ranks
three and greater, binary projective geometries and cycle matroids of
complete graphs, as well as their duals, satisfy these conditions.

\subsection{Representable and algebraic matroids} 

We start by considering representable matroids.  Crapo and
Schmitt~\cite{crsc:uft} showed that the free product of two matroids that
are representable over a given field is representable over every
sufficiently large field of the same characteristic; however, the
class of matroids that are representable over a given finite field is
not closed under free product.  In contrast, examples below show
that the free splice of matroids that are representable over a given
field might not be representable over any field.  However, there is a
positive result for splices of binary and ternary matroids, the key to
which is unique representability.  Recall that two matrix
representations of a matroid over a field $F$ are \emph{equivalent} if
one can be obtained from the other by the following operations:
interchange two rows; interchange two columns (along with their
labels); multiply a row or a column by a nonzero element of $F$;
replace a row by its sum with another row; replace every matrix entry
by its image under an automorphism of $F$; and delete or adjoin rows
of zeroes.  A matroid that is representable over $F$ is \emph{uniquely
  $F$-representable} if all of its matrix representations over $F$ are
equivalent.  Brylawski and Lucas, who introduced this important idea
in ~\cite{brlu:urc}, proved that a binary matroid is uniquely representable
over every field over which it is representable; also, ternary
matroids are uniquely representable over $\GF(3)$.  It follows that if
a matroid $M$ on $A$ is binary (resp., ternary) and if $P$ is a binary
(resp., ternary) matrix that represents a restriction $M|S$, then
there is a binary (resp., ternary) matrix that represents $M$ and for
which the columns corresponding to the elements in $S$ form $P$,
possibly with some added rows of zeroes.
\begin{pro}\label{pro:ur}
  Fix $F\in \{\GF(2),\GF(3)\}$.  If the matched matroid $M(A)$ and
  $N(B)$ are representable over $F$, then so is some splice of $M$ and
  $N$.
\end{pro}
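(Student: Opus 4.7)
The plan is to build an explicit $F$-matrix representing a splice of $M$ and $N$ by fitting together matrices for $M$ and $N$ that are compatible on their common minor $K = M.(A\cap B) = N|(A\cap B)$. Let $t = \rkm{A-B}$ and $r_K = \r K$, and recall from Section~\ref{sec:splice} that any splice of $M$ and $N$ has rank $t + \r N$.

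First I would choose an $F$-representation $[M]$ of $M$ in a convenient block form: picking a basis of $A-B$ in $M$ and extending to a basis of $M$, we may assume
$$
[M] \= \begin{pmatrix} I_t & M_2 & X \\ 0 & 0 & K_M \end{pmatrix},
$$
where the column blocks correspond to the chosen basis of $A-B$, the remaining elements of $A-B$, and $A\cap B$, respectively. Pivoting on the $I_t$ block shows that the $r_K \times |A\cap B|$ matrix $K_M$ is an $F$-representation of $K$. Next, I would take any $F$-representation $[N]$ of $N$; the columns of $[N]$ indexed by $A\cap B$ then form an $F$-representation $K_N$ of $K = N|(A\cap B)$.

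The crucial step is to use unique $F$-representability of $K$: by the theorem of Brylawski and Lucas, $K_M$ and $K_N$ are equivalent over $F$. Since $F \in \{\GF(2),\GF(3)\}$ has trivial automorphism group, this equivalence only requires row operations, column scalings by $\pm 1$, and adjunction of zero rows. Applying the required column scalings (which do not change the matroid that $[N]$ represents) to the $A\cap B$ columns of $[N]$, and then performing the required row operations on $[N]$, produces an $F$-representation $[N']$ of $N$ of the form
$$
[N'] \= \begin{pmatrix} K_M & Y_1 \\ 0 & Y_2 \end{pmatrix},
$$
whose column blocks correspond to $A\cap B$ and $B-A$.

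Finally, I would assemble the $(t + \r N) \times |A\cup B|$ block matrix
$$
Q \= \begin{pmatrix} I_t & M_2 & X & 0 \\ 0 & 0 & K_M & Y_1 \\ 0 & 0 & 0 & Y_2 \end{pmatrix}
$$
with column blocks $B_{A-B}$, $(A-B)-B_{A-B}$, $A\cap B$, and $B-A$, and verify that the matroid $L$ represented by $Q$ is a splice of $M$ and $N$. Restricting $Q$ to the columns indexed by $A$ recovers $[M]$ (after discarding zero rows), so $L|A = M$. Contracting $A-B$ amounts to pivoting on the $I_t$ block and deleting the top $t$ rows together with the $A-B$ columns, yielding exactly $[N']$; hence $L.B = N$, as required. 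The only real obstacle is aligning the two a~priori independent representations of $K$, and that is precisely where unique $F$-representability is indispensable.
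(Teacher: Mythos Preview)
Your argument is correct and follows essentially the same route as the paper: write $M$ in block form so that the bottom block over $A\cap B$ represents $K$, use unique $F$-representability (Brylawski--Lucas) to align a representation of $N$ with this block, and then stack the two matrices. The paper allows an arbitrary matrix in the top-right block where you put $0$, and it invokes the extension consequence of unique representability directly rather than spelling out the row-operation/column-scaling details as you do, but these are cosmetic differences.
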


\begin{proof}
  Let $X$ be a basis of $M|(A-B)$; let $|X|=k$.  Thus, $M$ has a
  matrix representation over $F$ of the form
  $$\left(
    \begin{array}{c|c|c}
      I_k & R & S \\
      \hline
      0 & 0 & T 
    \end{array} \right)$$
  where the columns of the identity matrix $I_k$ correspond to the
  elements of $X$, the columns of $R$ correspond to the elements of
  $(A-B)-X$, and the zeroes denote the zero matrices of the
  appropriate sizes.  The matrix realization of contraction shows that
  $T$ is a representation, over $F$, of $M.(A\cap B)$, that is, $N|
  (B\cap A)$, so unique representability implies that $N$ has a
  representation over $F$ of the form
  $$\left(
    \begin{array}{c|c}
      T & U  \\
      \hline
      0 & V  
    \end{array} \right).$$
  It follows that the matroid represented over $F$ by the matrix 
  $$\left(
    \begin{array}{c|c|c|c}
      I_k & R & S & W \\
      \hline
      0 & 0 & T & U \\ 
      \hline
      0 & 0 & 0 & V  
    \end{array} \right),$$ 
  for any matrix $W$ over $F$ of the appropriate size,
  is a splice of $M$ and $N$.
\end{proof}

We note that it is possible for the first two displayed matrices in
the proof above to be totally unimodular without the third having this
property, even if $W=0$.  Thus, if the counterpart of this result is
true for regular matroids, a different approach to the proof would be
needed.

\begin{figure}
\begin{center}
\includegraphics[width = 3.6truein]{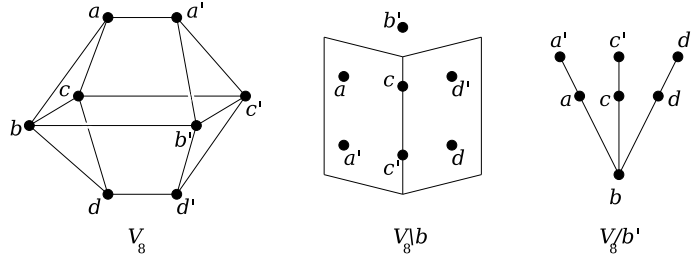}
\end{center}
\caption{The V\'amos cube $V_8$ and two minors whose free splice is
  $V_8$.}\label{vamos}
\end{figure}

The following example stands in contrast to the result above.  The
V\'amos cube, $V_8$, is the rank-$4$ matroid on
$\{a,a',b,b',c,c',d,d'\}$ in which the proper, nonempty cyclic flats
(all of rank $3$) are all sets of the form $\{x,x',y,y'\}$ except
$\{a,a',d,d'\}$ (see Figure~\ref{vamos}).  This matroid is neither
representable nor algebraic over any field.  Since $b$ and $b'$ are
clones, Theorem~\ref{thm:freesep} implies that $V_8$ is the free
splice of $V_8\del b$ and $V_8/b'$, both of which are representable
over all fields except $\GF(2)$ and $\GF(3)$; also, both are gammoids
and both are algebraic over all fields.  Furthermore, the geometric
argument that $V_8$ is not representable over any field also shows
that no splice of $V_8\del b$ and $V_8/b'$ is representable over any
field; likewise, the argument Ingleton and Main~\cite{inma:nam} give
to show that $V_8$ is not algebraic applies to all splices.  (Splices
other than the free splice have additional cyclic planes; for
instance, $\{a,b,c,d\}$ might be a cyclic plane.)  It follows that
there is no counterpart of Proposition~\ref{pro:ur} for any field other
than $\GF(2)$ and $\GF(3)$; also, no result of that type applies to
the class of gammoids or to the class of matroids that are algebraic
over any fixed field.

\subsection{Matroids with no $U_{2,q+2}$-minor}

For an integer $q>1$, let $\mathcal{U}(q)$ be the class of matroids
that have no $U_{2,q+2}$-minor.  These classes arise often in
extremal matroid theory.  Note that $\mathcal{U}(2)$ is the class of
binary matroids, but if $q$ is any other prime power, then
$\mathcal{U}(q)$ properly contains the class of matroids that are
representable over $\GF(q)$.  We now show that for $q>2$, the
counterpart of Proposition~\ref{pro:ur} fails for $\mathcal{U}(q)$.  Let
$p$ be the largest prime power less than $q$.  The projective plane
$\PG(2,p)$ is in $\mathcal{U}(q)$; let this be $M$.  Fix a point $a$
in the ground set $A$ of $M$ and an element $x\not\in A$.  Let $N$ be
the rank-$2$ matroid on $(A-a)\cup x$ whose rank-$1$ flats are $x$ and
the sets $\ell-a$ as $\ell$ runs over the lines of $M$ with $a\in
\ell$.  Note that $M$ and $N$ are matched; also, $N\in\mathcal{U}(q)$.
Any splice of $M$ and $N$ extends $M$ by putting $x$ either freely in
$M$ or freely on (only) one of the lines of $M$ not containing $a$;
neither type of splice is in $\mathcal{U}(q)$ since the former has
$p^2+p+1$ lines through $x$ while the latter has $p^2+1$, and both
numbers exceed $q+1$.

\subsection{Transversal matroids}

  We now show that the counterpart of Proposition~\ref{pro:ur} fails for
  the class of transversal matroids, and even for the more restricted
  classes of fundamental transversal matroids and bicircular matroids.
  First recall the geometric representation of transversal matroids
  from~\cite{br:art}: a matroid is transversal if and only if it has a
  geometric representation on a simplex in which each cyclic flat of
  rank $k$ spans a $k$-vertex face of the simplex.  Fundamental
  transversal matroids have such a representation in which, in
  addition, there is an element of the matroid at each vertex of the
  simplex.  Bicircular matroids have such a representation in which
  each element is along an edge or at a vertex of the simplex.  Now
  consider the $3$-whirl on $\{c,c',d,e,f,g\}$ with non-spanning
  circuits $\{e,f,g\}$, $\{c',d,g\}$, and $\{c,d,e\}$; add an isthmus
  $a$ to get the matroid $M$, which is both a fundamental transversal
  matroid and a bicircular matroid.  In any simplex representation of
  $M$, the elements $d,e,g$ are at vertices of the simplex.  Let $N$
  be the rank-$3$ matroid on $\{a,b,b',c',d,e,f,g\}$ whose proper
  nonempty cyclic flats are $\{d,e\}$ of rank $1$, and, of rank $2$,
  $\{a,b,b',c'\}$ and $\{c',d,e,f,g\}$.  Note that $N$ is also a
  fundamental transversal matroid and a bicircular matroid; also, $M$
  and $N$ are matched.  Since $\{a,b,b',c'\}$ is a cyclic flat of $N$,
  it follows that in any splice of $M$ and $N$, either $\{a,b,b',c'\}$
  or $\{a,b,b',c,c'\}$ is a cyclic flat; however, a cyclic flat in a
  transversal matroid must intersect the face $\{c,c',d,e,f,g\}$ of
  the simplex in some face, not, for instance, at $c$ and $c'$.  Thus,
  no splice of $M$ and $N$ is transversal.

\subsection{Base-orderable matroids}

We now show that the free splice need not preserve the property of
being base-orderable.  We first recall some definitions.  For bases
$B$ and $B'$ of a matroid $M$, elements $x\in B$ and $x'\in B'$ are
\emph{exchangeable} if both $(B-x)\cup x'$ and $(B'-x')\cup x$ are
bases of $M$.  A matroid $M$ is \emph{base-orderable} if for each pair
of bases $B$ and $B'$ of $M$, there is a bijection $\phi:B\rightarrow
B'$ such that for all $x\in B$, the elements $x$ and $\phi(x)$ are
exchangeable.  It is known that the class of base-orderable matroids
is closed under minors, duals, free extension, and matroid union; from
this and Joseph Kung's observation that free products can be expressed
as certain matroid unions (see his review of~\cite{crsc:uft} in
Mathematical Reviews, MR2177484), it follows that the class of
base-orderable matroids is closed under free product.  It is also
known (and follows from the results just mentioned) that all gammoids
are base-orderable.  The matroid $M$ in Figure~\ref{bo} (which appears
in~\cite{in:nbo} and is used to illustrate the theory developed
there) is not base-orderable since, for $B=\{a,b,c,d\}$ and
$B'=\{s,t,u,v\}$, the elements $a$ and $b$ are each exchangeable only
with $u$.  Note that $c$ and $d$ are clones, so $M$ is the free splice
of $M\del c$ and $M/d$, which are transversal and so are
base-orderable.

\begin{figure}
\begin{center}
\includegraphics[width = 1.2truein]{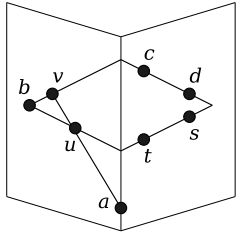}
\end{center}
\caption{A rank-$4$ matroid $M$ that is not base-orderable, yet is
  the free splice of its base-orderable minors $M\del c$ and
  $M/d$.}\label{bo}
\end{figure}

\subsection{The smallest nontrivial class that is closed under
  free splice}

We next highlight the complexity of the free splice operation by
considering the smallest nontrivial class of matroids that it
generates.  First, recall that the matroids obtained by starting with
loops and isthmuses and repeatedly taking free products are the nested
matroids (called freedom matroids in~\cite{crsc:uft}), which can be
characterized as the matroids whose cyclic flats form a chain under
inclusion.  Up to isomorphism, there are $2^n$ nested matroids on $n$
elements; they form a small subclass of the class of fundamental
transversal matroids; also, minors of nested matroids are nested.  Now
consider the counterpart for free splice: let $\mathcal{N}$ be the
class of matroids obtained by starting with loops and isthmuses and
repeatedly taking free splices.  We show that the resulting class is
much larger than the class of nested matroids; indeed, it includes all
fundamental transversal matroids, and much more.  The proof of the
next result uses the following sufficient condition for a collection
$\mathcal{C}$ of matroids to be contained in $\mathcal{N}$: if each
matroid $L(E)$ in $\mathcal{C}$, 
with $|E|\geq 2$, can be written as $M\fsp N$ where $M$ and
$N$ are proper minors of $L$ that are in $\mathcal{C}$, then
$\mathcal{C}\subseteq\mathcal{N}$.  (Note that $\mathcal{C}$ does not
have to be minor-closed.)

\begin{pro}\label{pro:lp}
  All fundamental transversal matroids are in $\mathcal{N}$.
  Furthermore, $\mathcal{N}$ is closed under taking duals but not
  under taking minors.
\end{pro}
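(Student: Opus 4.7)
The proposition has three assertions. For the inclusion of fundamental transversal matroids in $\mathcal{N}$, my approach would apply the sufficient condition stated immediately before the proposition, taking $\mathcal{C}$ to be the class of FTMs. Induction on $|E|$ then reduces the task to showing that any FTM $L$ with $|E|\geq 2$ admits a decomposition $L=L|A\fsp L.B$ with $(A,B)$ a nontrivial free separator and with $L|A$ and $L.B$ both fundamental transversal matroids. By Corollary \ref{cor:bfsep}, it suffices to find distinct elements $x,y\in E$ such that every cyclic flat of $L$ containing $x$ also contains $y$; then $(A,B)=(E-x,E-y)$ is the required free separator. I would extract such a pair from the structure of $L$ as an FTM: if $L$ has a loop $\ell$, then $\ell$ lies in every nonempty cyclic flat, so any $x\neq\ell$ paired with $y=\ell$ works; dually for isthmuses; and in the loop- and isthmus-free case, the simplex representation provides a non-vertex element $e$ lying on some face of the simplex whose cyclic flats are all dominated by any vertex $v$ of that face, giving the pair $(e,v)$. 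The main technical step will be verifying that the minors $L|A$ and $L.B$ remain FTMs, which follows from the behavior of the simplex representation under the relevant deletions and contractions.

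For closure of $\mathcal{N}$ under duality, the plan is induction on the number of free-splice operations in a construction of $L\in\mathcal{N}$ from loops and isthmuses. The base case is immediate because the dual of a loop is an isthmus and vice versa; the inductive step uses Proposition \ref{pro:dual}, which gives $(M\fsp N)^*=N^*\fsp M^*$, so any construction of $L$ dualizes to a construction of $L^*$ of the same form with the order of each splice reversed.

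For the failure of minor-closure, the key structural observation is that every matroid in $\mathcal{N}$ with $|E|\geq 2$ must be reducible, since its construction must involve a genuinely nontrivial free splice (trivial splices reproduce a single factor and cannot by themselves yield a ground set with more than one element). Consequently no irreducible matroid with $|E|\geq 2$ lies in $\mathcal{N}$, so it suffices to exhibit a specific fundamental transversal matroid having an irreducible minor with at least two elements; irreducibility can be checked directly via Corollary \ref{cor:bfsep}. The main obstacle will be producing the explicit example.
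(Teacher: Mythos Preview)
Your strategy coincides with the paper's in all three parts: the simplex argument for fundamental transversal matroids, the duality formula $(M\fsp N)^*=N^*\fsp M^*$ for closure under duals, and the observation that every matroid in $\mathcal{N}$ on at least two elements is reducible (so any irreducible minor witnesses failure of minor-closure).

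One small care point in your treatment of loops: taking $y=\ell$ and $x$ \emph{arbitrary} yields a valid free separator $(E-x,E-\ell)$, but $L\del x$ need not remain a fundamental transversal matroid when $x$ is the unique designated element at some vertex of the simplex. The paper sidesteps this by noting that one may assume $L$ has no loops and has at least one element beyond the designated vertex representatives $v_1,\dots,v_k$; those reductions are easy. In the remaining case your choice of a non-vertex element $e$ and a vertex $v$ of its face is exactly the paper's argument, and the paper confirms that $L\del e$ and $L/v$ are again fundamental transversal matroids.

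The genuine gap is the example for the third assertion, which you correctly flag as the main obstacle but do not supply. The paper's choice is the bicircular matroid $B_n$ having one element at each vertex of the $n$-vertex simplex and one element freely on each edge; being a fundamental transversal matroid, $B_n\in\mathcal{N}$. Restricting to the $\binom{n}{2}$ edge elements gives a matroid $B'_n$ which, for $n\ge 5$, is irreducible: for each vertex $v$, the $\binom{n-1}{2}$ edge elements on edges not incident with $v$ form a cyclic flat, and these cyclic flats separate every ordered pair of elements in the sense of Corollary~\ref{cor:bfsep}. Hence $B'_n\notin\mathcal{N}$, and $\mathcal{N}$ is not minor-closed.
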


\begin{proof}
  A fundamental transversal matroid $L$ of rank $k$ has a geometric
  realization on a $k$-vertex simplex in which there is at least one
  element of $L$ at each vertex.  Let $v_1,v_2,\ldots, v_k$ be at the
  $k$ vertices.  Easy arguments show that we may assume that $L$ has
  no loops and that it has elements in addition to $v_1,v_2,\ldots,
  v_k$.  Let $x$ be an element of $L$ not among $v_1,v_2,\ldots, v_k$.
  Thus, $x$ is freely in some face $F$ of the simplex.  It follows
  that the elements of $L$ in $F$ form a cyclic flat and that this
  cyclic flat is contained in every cyclic flat that contains $x$.
  Therefore $L=(L\del x)\fsp(L/v_i)$ for any $v_i$ in $F$.  Since
  $v_i$ is at a vertex of the simplex and $x$ is not, both $L/v_i$ and
  $L\del x$ are fundamental transversal matroids.  The assertion about
  these matroids now follows from the sufficient condition noted
  above.

 The class $\mathcal{N}$ is clearly closed under duality.  To see
  that it is not closed under minors, consider the bicircular matroid
  $B_n$ that has one element at each vertex of the $n$-vertex simplex
  and one element freely on each edge of the simplex.  Thus, $B_n$ has
  $n+\binom{n}{2}$ elements.  Since $B_n$ is a fundamental transversal
  matroid, it is in $\mathcal{N}$.  Let $B'_n$ be the restriction of
  $B_n$ to the $\binom{n}{2}$ elements that are freely on the edges of
  the simplex.  Note that if $n\geq 5$, then, for each vertex $v$ of
  the simplex, the $\binom{n-1}{2}$ elements that are freely on the
  edges of the simplex that do not contain the vertex $v$ form a
  cyclic flat of $B'_n$.  It follows from this observation and
  Corollary~\ref{cor:bfsep} that $B'_n$ is irreducible for $n\geq 5$,
  and so is not in $\mathcal{N}$.
\end{proof}

We also note that since every lattice path matroid~\cite{bodeno:lpm}
contains either a loop, an isthmus, or a pair of clones, and since
this class of matroids is closed under minors, it follows that all
lattice path matroids are in $\mathcal{N}$.  (Although they are
transversal matroids, not all lattice path matroids are fundamental
transversal matroids.)  Not all matroids in $\mathcal{N}$ are
transversal: $V_8$ is in $\mathcal{N}$, as is the truncation of
$U_{1,2}\oplus U_{1,2}\oplus U_{1,2}$ to rank $2$.

\subsection{A sufficient condition for a minor-closed class to be
  closed under free splice}

We now turn to classes of matroids that yield positive answers to a
question posed at the start of this section: Theorem~\ref{thm:k4}
gives sufficient conditions for the excluded minors of a minor-closed
class $\mathcal{C}$ of matroids so that the free splice of any two
matched matroids in $\mathcal{C}$ will also be in $\mathcal{C}$.  We
then show that, for ranks three and greater, binary projective
geometries and cycle matroids of complete graphs, as well as their
duals, satisfy these sufficient conditions.  We start with a lemma.

\begin{lem}\label{lemma:k4}
  Assume $G$ is a matroid on the ground set $Z\cup a$ where $a\not\in
  Z$ and $a$ is neither a loop nor an isthmus of $G$.  If the ground
  set of a matroid $K$ is the disjoint union of $X$, $Y$, and $Z$, and
  if $K|Z=G\del a$ and $K.Z=G/a$, then either $K/X$ or $K\del Y$ has a
  minor isomorphic to $G$.
\end{lem}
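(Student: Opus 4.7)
The plan is to extract $G$ as a minor of $K$ by locating a single element $p\in X\cup Y$ and a subset $C\subseteq(X\cup Y)-p$ such that the minor $(K/C)|(Z\cup p)$ of $K$ is isomorphic to $G$ with $p$ playing the role of $a$; the position of $p$ in a carefully chosen ordering of $X\cup Y$ will then force this minor to be a minor of either $K\del Y$ or $K/X$.

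The key observation is that $G/a\les G\del a$ in $\cm(Z)$, with $\r{G\del a}-\r{G/a}=1$.  The inclusion $\cf(G/a)\subseteq\cf(G\del a)$ is immediate from the characterization $\cf(G/a)=\{F\subseteq Z\co F\cup a\in\cf(G)\}$, and the rank equalities follow because $a$ is neither a loop nor an isthmus of $G$.  Since any two strong-order-comparable matroids of the same rank must coincide (an easy consequence of the rank criterion for $\les$ recalled in Section~\ref{sec:prelim}), the strong-order interval $[G/a,\,G\del a]$ in $\cm(Z)$ contains only the two matroids $G/a$ and $G\del a$ themselves.

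The main substantive step is to show that $(K/C)|Z\in\{G\del a,\,G/a\}$ for every $C\subseteq X\cup Y$, which I would do by sandwiching $(K/C)|Z$ inside the above interval.  The upper bound $(K/C)|Z\les G\del a$ follows by applying the standard fact $L/A\les L\del A$ (a routine submodularity computation on flats) to $L=K$ and $A=C$, and then restricting to $Z$, noting that strong order is preserved under restriction.  The lower bound $G/a\les(K/C)|Z$ comes from applying the same fact to $L=K/C$ with $A=(X\cup Y)-C$.  This confinement of $(K/C)|Z$ to a two-element interval is the part I expect to require the most care.

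With this in hand, I would enumerate $X\cup Y$ as $p_1,\dots,p_{|X|},p_{|X|+1},\dots,p_{|X|+|Y|}$, listing the elements of $X$ first, and set $C_i=\{p_1,\dots,p_i\}$.  Since $(K/C_0)|Z=G\del a$ while $(K/C_{|X|+|Y|})|Z=G/a$, some smallest index $i^*$ satisfies $(K/C_{i^*})|Z=G/a$.  Put $C=C_{i^*-1}$ and $p=p_{i^*}$; then $L:=(K/C)|(Z\cup p)$ has $L\del p=G\del a$ and $L/p=G/a$.  A direct rank check (ruling out $p$ being a loop or isthmus of $L$, since either would force $G\del a=G/a$) yields $L\cong G$ via the identification $p\leftrightarrow a$.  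Finally, if $i^*\leq|X|$ then $p\in X$ and $C\subseteq X-p$, so $L$ is a minor of $K\del Y$; if $i^*>|X|$ then $p\in Y$ and $C\supseteq X$, so $L$ is a minor of $K/X$.
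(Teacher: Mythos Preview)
Your proof is correct and follows essentially the same line as the paper's: both arguments locate a subset $C\subseteq X\cup Y$ and an element $p$ so that $(K/C)|(Z\cup p)\cong G$, by detecting the first moment at which contracting one more element causes the rank of $Z$ to drop.  Your presentation is a bit more streamlined---you make the strong-order sandwich $G/a\les(K/C)|Z\les G\del a$ explicit and run a single enumeration of $X\cup Y$ (listing $X$ first) rather than splitting into cases on whether $r_{K/X}(Z)=r_K(Z)$---and your ``direct rank check'' that $L\cong G$ is exactly the uniqueness-of-extension fact the paper invokes, verified pointwise via $r_L(S\cup p)=r_{G/a}(S)+1=r_G(S\cup a)$.
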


\begin{proof}
  That $a$ is neither a loop nor an isthmus of $G$ implies that $G$ is
  the only extension $G'$ of $G\del a$ by the element $a$ with
  $G'/a=G/a$.  Therefore if $t$ is a non-loop in $\cl{K}Z\cap (X\cup
  Y)$, then $K|(Z\cup t)$ is isomorphic to $G$, for otherwise $K/t$
  could not have $G/a$ as the further contraction $K/(X\cup Y)$.  More
  generally, the same observation shows that in any contraction $K/U$
  with $U\subseteq X\cup Y$ for which $\rk{K/U}Z=\rk{K} Z$, if $t$ is a
  non-loop of $K/U$ in $\cl{K/U}Z\cap (X\cup Y)$, then $K/U|(Z\cup
  t)$ is isomorphic to $G$.  It follows that if $K/X$ does not have a
  minor isomorphic to $G$, then $\rk{K/X}Z=r(G/a)=\rk{K}Z-1$.  In this
  case, there is a subset $X'$ of $X$ and element $t\in X-X'$ for
  which $K/X'|(Z\cup t)$ is isomorphic to $G$, so $K\del Y$ has a
  minor isomorphic to $G$.
\end{proof}

In what follows, for a matroid $G$ and element $a$ of $G$, we let
$G_a$ denote the principal truncation of $G$ at $a$, that is,
$(G/a)\oplus I^*(a)$; dually, $G^a$ denotes the principal lift of $G$
at $a$, that is, $(G\del a)\oplus I(a)$.

\begin{thm}\label{thm:k4}
  Let $\mathcal{C}$ be a minor-closed class of matroids that has the
  following properties: the excluded minors for $\mathcal{C}$ have
  neither loops nor isthmuses, and whenever an excluded minor $G$ for
  $\mathcal{C}$ is written as a Higgs lift $L^j_{G_1,G_2}$ of a proper
  quotient $G_1$ and a proper lift $G_2$ of $G$, then $j=1$ and there
  is an element $a$ in $G$ such that
    \begin{enumerate}
    \item $G_1$ is the principal truncation $G_a$ and $a$ is the only
      loop of $G_a$, and
    \item $G_2$ is the principal lift $G^a$ and $a$ is the only
      isthmus of $G^a$.
    \end{enumerate}
    If $M(A)$ and $N(B)$ are matched matroids in $\mathcal{C}$, then
    $M\fsp N$ is in $\mathcal{C}$.
\end{thm}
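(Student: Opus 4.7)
My approach is proof by contradiction. Assume $M, N \in \mathcal{C}$ are matched yet $M\fsp N \notin \mathcal{C}$. Since $\mathcal{C}$ is minor-closed, some excluded minor $G$ for $\mathcal{C}$ is a minor of $M\fsp N$. Write this minor as $G = (M\fsp N)\del D/C$ for disjoint $D, C \subseteq A\cup B$, and let $E_G = (A\cup B)\setminus(D\cup C)$ denote the ground set of $G$. Since $M\fsp N = \higgs{i}{\ist M}{\loo N}$ with $i = \rkm{A-B}$, two applications of Proposition~\ref{pro:hminor} (one for deletion and one for contraction) express $G$ itself as a Higgs lift
\[
G = L^j_{G_1, G_2},
\]
where $G_1 = (\loo N)\del D/C$ is a quotient of $G$, $G_2 = (\ist M)\del D/C$ is a lift of $G$, and $j$ is the resulting index.

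The theorem's hypothesis on the excluded minors of $\mathcal{C}$ now applies to this presentation. If $G_1 = G$, then $G$ is a minor of $\loo N = N\oplus I^*(A-B)$; since $G$ has no loops (being an excluded minor), any remaining loops from $A-B$ can be deleted to exhibit $G$ as a minor of $N \in \mathcal{C}$, a contradiction. Dually, $G_2 = G$ would present $G$ as a minor of $\ist M$ and hence of $M \in \mathcal{C}$, again a contradiction. So the decomposition is proper, and the hypothesis forces $j = 1$ together with the existence of an element $a \in E_G$ for which $G_1 = G_a$ (with $a$ the only loop of $G_a$) and $G_2 = G^a$ (with $a$ the only isthmus of $G^a$).

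Since $A-B \subseteq \loops{\loo N}$, the uniqueness of $a$ as the sole loop of $(\loo N)\del D/C$ forces $(A-B)\setminus(D\cup C) \subseteq \{a\}$; dually, $(B-A)\setminus(D\cup C) \subseteq \{a\}$. Consequently $Z := E_G\setminus\{a\} \subseteq A\cap B$. Unpacking $G_a = (G/a)\oplus I^*(a)$ as a minor of $\loo N$ and $G^a = (G\del a)\oplus I(a)$ as a minor of $\ist M$, and accounting for the added loops and isthmuses, one sees that $G\del a$ is realized as an explicit minor of $M$ on $Z$ and $G/a$ as an explicit minor of $N$ on $Z$; the matching identity $M.(A\cap B) = N|(A\cap B)$ relates these two realizations on the common set $Z$.

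To finish, I plan to invoke Lemma~\ref{lemma:k4}. The strategy is to construct a matroid $K$, itself a minor of $M$ or of $N$, whose ground set is a disjoint union $X \sqcup Y \sqcup Z$ satisfying $K|Z = G\del a$ and $K.Z = G/a$. Lemma~\ref{lemma:k4} then produces a $G$-minor in $K/X$ or in $K\del Y$, hence in $M$ or $N$, yielding the final contradiction. The main obstacle is the construction of $K$: one must handle separately the subcases $a \in A-B$, $a \in B-A$, and $a \in A\cap B$, and in each subcase use the matching identity together with the constraint that all of $A\triangle B$ except possibly $\{a\}$ has been partitioned between $D$ (to be deleted) and $C$ (to be contracted). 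Exploiting the freedom to re-assign elements of $A-B$ between $D$ and $C$ (they are loops of $\loo N$, so delete and contract coincide there) and likewise for $B-A$ inside $\ist M$, one assembles a $K$ that is a minor of $M$ or of $N$ and for which the pair $(K|Z, K.Z)$ equals $(G\del a, G/a)$.
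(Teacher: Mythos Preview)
Your overall strategy closely parallels the paper's: argue by contradiction, use Proposition~\ref{pro:hminor} to write the excluded minor $G$ as a Higgs lift, invoke the hypothesis to identify $G_1=G_a$ and $G_2=G^a$, deduce $Z\subseteq A\cap B$, and finish via Lemma~\ref{lemma:k4} after a case split on the location of $a$. The first several steps are essentially correct.

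However, your final step has a genuine gap. You propose to construct $K$ as \emph{itself a minor of $M$ or of $N$} with $K|Z=G\del a$ and $K.Z=G/a$, and then apply Lemma~\ref{lemma:k4}. But such a $K$ need not exist: for instance, in the case $a\in A\cap B$, taking $K$ a minor of $M$ would force $G/a=K.Z$ to be a minor of $M$, which nothing in the setup guarantees. The ``re-assignment'' freedom you cite only says that swapping elements of $A-B$ between deletion and contraction leaves $G_1=(\loo N)\del D/C$ unchanged (and dually for $B-A$ and $G_2$); it does not let you assemble the required $K$ inside $M$ or inside $N$.

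The paper handles this differently in two respects. First, it invokes Corollary~\ref{cor:minor2} \emph{upfront} to reduce (replacing $M,N$ by minors still in~$\mathcal C$) to the case where the deletion set $X$ lies in $A-B$ and the contraction set $Y$ lies in $B-A$; this is the clean way to formalize what you call ``re-assignment.'' Second, the matroid $K$ fed to Lemma~\ref{lemma:k4} is \emph{not} a minor of $M$ or $N$ but rather $M\fsp N$ itself (or, in the case $a\in A-B$, the contraction $(M\fsp N)/a$). After the reduction one finds, for example in the case $a\in A\cap B$, that $X=A-B$ and $Y=B-A$, whence the lemma's output $K/(X\cup a)$ equals $N/a$ and $K\del Y$ equals $M$; this is what places the resulting $G$-minor inside $M$ or $N$. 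The case $a\in A-B$ additionally requires a short argument (borrowing the idea in the lemma's proof) to verify that $((M\fsp N)/a)|Z=G\del a$, a step your outline does not anticipate.
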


\begin{proof}
  Let $K=M\fsp N$.  Toward getting a contradiction, assume
  $K\not\in\mathcal{C}$; say $K\del X/Y= G$ where $G$ is one of the
  excluded minors for $\mathcal{C}$.  By Corollary~\ref{cor:minor2} and
  the assumption that $\mathcal{C}$ is minor closed, we may assume
  $X\subseteq A-B$ and $Y\subseteq B-A$.  These inclusions and
  Proposition \ref{pro:hminor} give $G=K\del X/Y=L^j_{(N/Y)_0,(M\del
    X)_1}$ for some integer $j$.  If $(M\del X)_1$ were $G$, then
  either $M\del X=G$ or $G$ would have isthmuses; neither conclusion
  is possible, so $(M\del X)_1$ is a proper lift of $G$.  Similarly,
  $(N/Y)_0$ is a proper quotient of $G$.  By the hypotheses about the
  excluded minors for $\mathcal{C}$, we have $j=1$, $(M\del X)_1
  =G^a$, and $(N/Y)_0 =G_a$ for some element $a$ in $G$.  Let the
  ground set of $G\del a$ be $Z$.  Only loops and isthmuses of $(M\del
  X)_1$ and $(N/Y)_0$ can be outside $A\cap B$, so the hypotheses give
  $Z\subseteq A\cap B$.  We now consider which of $A$ and $B$ the
  element $a$ is in.

  First assume $a\in A-B$.  Set $K'=K/a$.  Since $a\not\in B$, from
  $(N/Y)_0 =G_a$ we get $N/Y =G/a$, so
  $$
K'.Z\=K/(X\cup Y\cup  a)=N/Y\=G/a.
$$ 
  Note that $K|(Z\cup a)$ is an extension of $G\del a$ since $(M\del
  X)_1 =G^a$.  The idea in the proof of Lemma~\ref{lemma:k4} shows
  that if $a$ is a non-loop of $K$ that is in $\cl{K}Z$, then, since
  $G/a$ is a minor of $K/a$, we would have $K|(Z\cup a)=G$, contrary
  to $M$ not having $G$ as a minor.  Therefore $a$ is either a loop or
  an isthmus of $M\del X$, so
  $$
K'|Z\=K\del (X\cup Y)/a\=M\del X/a\=G\del a.
$$ 
  Therefore, by Lemma~\ref{lemma:k4}, either $K'/X$ or $K'\del Y$ has
  a minor isomorphic to $G$.  Restated, this conclusion is that either
  $N$ or $M/a$ (and so $M$) has a minor isomorphic to $G$.  This
  contradiction completes the argument if $a\in A-B$.  The case $a\in
  B-A$ follows by duality.  (Using duality is justified by the
  observation that the class
  $\mathcal{C}^*=\{M^*\,:\,M\in\mathcal{C}\}$ that is dual to
  $\mathcal{C}$ satisfies the hypotheses of the theorem.)

  Finally, assume $a\in A\cap B$.  Thus, $M\del X=G^a$ and $N/Y=G_a$,
  so
  $$K\del (X\cup Y\cup a) \= M\del (X\cup a) \= G\del a$$
  and
  $$K/(X\cup Y\cup a) \= N/(Y\cup a) \= G/a.$$
  Therefore by Lemma~\ref{lemma:k4}, either $K/(X\cup a)$ or $K\del Y$
  has a minor isomorphic to $G$, that is, either $N/a$ (and so $N$) or
  $M$ has such a minor.  This contradiction completes the proof.
\end{proof}

Some familiar matroids have the properties that are hypothesized in
Theorem~\ref{thm:k4} for the excluded minors of $\mathcal{C}$.  Below
we show that any binary projective geometry $\PG(n-1,2)$ of rank $3$
or more has these properties, as does the cycle matroid of a complete
graph, $M(K_{n+1})$, for $n\geq 3$.  To mildly extend the collection of
matroids known to have these properties, note that if $M$ has these
properties, then so does $M^*$.  The arguments below use two
well-known results about quotients: if $N(E)\les M(E)$, then there is
a matroid $K$ with $K|E=M$ and $K.E=N$; also, if $N\les M$ and
$r(N)=r(M)$, then $N=M$.  The arguments also use two simple
observations: if $N\les M$, then $N|A\les M|A$ for any $A\subseteq E$;
also, a cyclic flat of size $3$ that has rank at least two is a line.

First consider the binary projective plane, $\PG(2,2)$, that is, the Fano
plane, $F_7$.  Assume $F_7 = L^j_{G_1,G_2}$ where $G_1$ and $G_2$ are,
respectively, a proper quotient and a proper lift of $F_7$.  Thus,
$r(G_1)<3<r(G_2)$.  By part (7) of Theorem \ref{thm:hcrypt}, each
line of $F_7$ must be a cyclic flat of either $G_1$ or $G_2$.  It
follows that if $r(G_1)<2$, then at least six of the lines of $F_7$
would be lines of $G_2$, which, by the structure of the lines of
$F_7$, would yield the contradiction $r(G_2)=3$.  Thus, $r(G_1)=2$.
Therefore there is a single-element extension $P'$ of $F_7$ by a
non-loop $x$ such that $G_1=P'/x$.  Since $F_7$ is modular, $P'$
extends $F_7$ by adding $x$ freely in the plane, on a line, or
parallel to a point $a$; the first two options would force all but at
most one line of $F_7$ to be lines of $G_2$, which would yield the
contradiction $r(G_2)=3$, so $G_1$ is the principal truncation
$(F_7)_a$.  Therefore, all lines of $F_7$ that do not contain $a$ must
be lines of $G_2$; this conclusion and the structure of the lines of
$F_7\del a$ imply that $G_2$ is the principal lift, $(F_7)^a$.

Now consider the cycle matroid $M(K_4)$.  Assume $M(K_4)$ is
$L^j_{G_1,G_2}$ where $G_1$ and $G_2$ are, respectively, a proper
quotient and a proper lift of $M(K_4)$.  By part (7) of Theorem
\ref{thm:hcrypt}, each cyclic flat of $M(K_4)$ must be a cyclic flat 
of at least one of $G_1$ and $G_2$.
Since $r(G_2)>3$, at most two of the four cyclic lines of $M(K_4)$ are
cyclic flats (necessarily lines) of $G_2$.  Since $M(K_4)$ is
self-dual, it follows that at most two of the four cyclic lines of
$M(K_4)$ are cyclic flats of $G_1$.  Thus, two cyclic lines of
$M(K_4)$ are cyclic flats of $G_1$ and the other two are lines of
$G_2$.  From these conclusions, it follows that there is some $a$ in
$M(K_4)$ for which $G_1$ is the principal truncation $M(K_4)_a$ and
$G_2$ is the principal lift $M(K_4)^a$.

Now consider $n\geq 4$.  Let $G$ be either $\PG(n-1,2)$ or
$M(K_{n+1})$.  Let $A$ be the ground set of $G$.  Let $G_1$ and $G_2$
be a proper quotient and a proper lift of $G$ with $G= L^j_{G_1,G_2}$.
Thus, $j>0$.  By the structure of $G$, if all lines of $G$ were lines
of $G_2$, then $r(G_2)=n$, contrary to $G_2$ being a proper lift of
$G$.  Let $\ell$ be a line of $G$ that is not a line of $G_2$ and let
$X$ be a plane of $G$ that contains $\ell$; if $G$ is $M(K_{n+1})$,
choose $X$ so that $G|X$ is $M(K_4)$.  By Proposition
\ref{pro:hminor}, $G|X = L^j_{G_1|X,G_2|X}$.  Since $\ell$ is a line
of $G|X$ but not of $G_2|X$, it follows that $G_2|X$ is a proper lift
of $G|X$; this conclusion and the inequality $j>0$ imply that $G_1|X$
is a proper quotient of $G|X$.  By the results in the previous two
paragraphs, $G_1|X$ is the principal truncation $(G|X)_a$ for some
$a\in X$, so $a$ is a loop of $G_1$.  Let $G'$ be an extension of $G$
to the set $A\cup T$ such that $G'\del T= G$ and $G'/T=G_1$.  Thus,
$\rk{G'}T=\rk{G'}{T\cup a}$.  For any line $\ell'$ of $G$ with
$a\not\in \ell'$, we have $\rk{G'}{T\cup \ell'}=\rk{G'}{T\cup \ell'\cup
a}$, so $\ell'$ is not a flat of $G_1$.  Part (7) of Theorem
\ref{thm:hcrypt} then implies that each line $\ell'$ of $G$ with
$a\not\in \ell'$ is a line of $G_2$. This conclusion and the structure
of $G\del a$ gives $r(G_2\del a)=n$, from which it follows that
$G_2\del a = G\del a$, so $G_2$ is the principal lift $G^a$.  Also,
$a$ must be the only loop of $G_1$.  By part (7) of Theorem
\ref{thm:hcrypt} and the observation that no cyclic flat of $G$ that
contains $a$ is a cyclic flat of $G^a$, it follows that all such sets
are cyclic flats of $G_1$; by considering a maximal chain of such
flats, we see that $G_1$ must have rank $n-1$.  These conclusions and
that $a$ is a loop of $G_1$ imply that $G_1$ is the principal
truncation $G_a$.

\end{document}